\def\a{\alpha}
\def\d{\delta}
\def\e{\epsilon}
\def\z{\zeta}
\def\t{\theta}
\def\T{\Theta}
\def\k{\kappa}
\def\s{\sigma}
\def\O{\Omega}
\def\ie{\textit{i.e., }}
\def\RR{\mathbb R}
\def\Sn{\mathfrak{S}_n}
\def\fcar{\mathds{1}}
\def\pinf{{+\infty}}
\def\suchthat{\,\big|\,}
\def\trace{\mathbf{tr}}
\def\ssi{\Longleftrightarrow}	
\def\esp{\mathbf E}
\def\prob{\mathbf P}
\def\trace{\mathbf{tr}}
\def\btheta{\boldsymbol{\theta}}
\def\bddiese{\mathbf{d}^\text{\tt\#}}
\def\bxdiese{\mathbf{x}^\text{\tt\#}}
\def\bX{\boldsymbol{X}}
\def\bXdiese{\boldsymbol{X}^{\text{\tt\#}}}
\def\Adiese{{A}^{\texttt{\#}}}
\def\bdiese{{b}^{\texttt{\#}}}
\def\Idiese{I^{\text{\tt\#}}}
\def\Xdiese{X^{\text{\tt\#}}}
\def\sdiese{\s^{\text{\tt\#}}}
\def\xidiese{\xi^\text{\tt\#}}
\def\bsigma{\boldsymbol{\sigma}}
\def\id{\textit{id}}
\def\tdiese{\t^\text{\tt\#}}
\def\bartdiese{\bar\t^\text{\tt\#}}
\def\barXdiese{\bar X^\text{\tt\#}}
\def\btdiese{\boldsymbol{\theta}^\text{\tt\#}}
\def\bsigmadiese{\boldsymbol{\sigma}^\text{\tt\#}}
\def\bQ{\mathbf Q}
\newcommand{\fonction}[5]{\begin{array}{l|rcl}
#1: & #2 & \longrightarrow & #3 \\
    & #4 & \longmapsto & #5 \end{array}}
\newenvironment{proofof}[1][Proof]{\begin{trivlist}
\item[\hskip \labelsep {\bfseries #1}]}{\end{trivlist}}
\begin{document}

\title{Minimax rates in permutation estimation for feature matching}

\author{\name Olivier Collier \email olivier.collier@enpc.fr \\
       \addr Imagine- LIGM\\
       Universit\'e Paris EST\\
       Marne-la-Vall\'ee, FRANCE
       \AND
       \name Arnak S.\ Dalalyan \email arnak.dalalyan@ensae.fr \\
       \addr Laboratoire de Statistique\\
       ENSAE - CREST\\
       Malakoff, FRANCE}

\editor{Leslie Pack Kaelbling}

\maketitle

\begin{abstract}
The problem of matching two sets of features appears in various tasks of computer vision and
can be often formalized as a problem of permutation estimation. We address this problem from
a statistical point of view and provide a theoretical analysis of the accuracy of several
natural estimators. To this end, the minimax rate of separation is investigated and its expression
is obtained as a function of the sample size, noise level and dimension of the features.
We consider the cases of homoscedastic and heteroscedastic noise and establish, in each case,
tight upper bounds on the separation distance of several estimators. These upper bounds
are shown to be unimprovable both in the homoscedastic and heteroscedastic settings. Interestingly,
these bounds demonstrate that a phase transition occurs when the dimension $d$ of the features
is of the order of the logarithm of the number of features $n$. For $d=O(\log n)$, the rate is
dimension free and equals $\sigma (\log n)^{1/2}$, where $\sigma$ is the noise level. In contrast,
when $d$ is larger than $c\log n$ for some constant $c>0$, the minimax rate increases with $d$ and
is of the order of $\sigma(d\log n)^{1/4}$.
We also discuss the computational aspects of the estimators and provide empirical evidence of their
consistency on synthetic data. Finally, we show that our results extend to more general matching
criteria.
\end{abstract}

\begin{keywords}
{permutation estimation},
{minimax rate of separation},
{feature matching}
\end{keywords}

\section{Introduction}

\subsection{The Statistical Problem of Feature Matching}

In this paper, we present a rigorous statistical analysis of the problem of permutation estimation and multiple feature
matching from noisy observations. More precisely, let $\{X_1,\ldots,X_n\}$ and $\{\Xdiese_1,\ldots,\Xdiese_m\}$ be
two sets of vectors from $\RR^d$, hereafter referred to as noisy features, containing many matching elements. That is,
for many $X_i$'s there is a $\Xdiese_j$ such that $X_i$ and $\Xdiese_j$ coincide up to an observation noise (or measurement
error). Our goal is to estimate an application $\pi^* : \{1,\ldots,n\}\to \{1,\ldots,m\}$ for which each $X_i$ matches
with $\Xdiese_{\pi^*(i)}$ and to provide tight conditions which make it possible to accurately recover $\pi^*$ from data.


In order to define a statistical framework allowing us to compare different estimators of $\pi^*$, we
confine\footnote{These assumptions are imposed for the purpose of getting transparent theoretical results and are in no
way necessary for the validity of the considered estimation procedures, as discussed later in the paper.} our attention to
the case $n=m$, that is when the two sets of noisy features have equal sizes.  Furthermore, we assume that there exists a
unique permutation of $\{1,\ldots,n\}$, denoted $\pi^*$, leading to pairs of features $(X_i,\Xdiese_{\pi^*(i)})$ that match
up to a measurement error. In such a situation, it is clearly impossible to recover the true permutation $\pi^*$ if
some features within the set $\{X_1,\ldots,X_n\}$ are too close. Based on this observation, we propose to measure
the quality of a procedure of permutation estimation by the minimal distance between pairs of different features for which
the given procedure is still consistent. This quantity will be called \textit{separation distance} and will be the
main concept of interest in the present study. In this respect, the approach we adopted is close in spirit to
the minimax theory of hypotheses testing (see, for instance, \cite{Spokoiny96, Ingster}).

\subsection{A Motivating Example: Feature Matching in Computer Vision}

Many tasks of computer vision, such as object recognition, motion tracking or structure from motion, are currently
carried out using algorithms that contain a step of feature matching, cf.~\cite{Szeliski, Hartley_Zisserman}.
The features are usually local descriptors that serve to summarize the images. The most famous examples of such
features are perhaps SIFT \citep{Lowe2004} and SURF \citep{surf}. Once the features have been computed for each image,
an algorithm is applied to match features of one image to those of another one. The matching pairs are then used
for estimating the deformation of the object, for detecting the new position of the followed object, for creating a
panorama, etc. In this paper, we are interested in simultaneous matching of a large number of features. The main focus
is on the case when the two sets of features are extracted from the images that represent the same scene with a large
overlap, and therefore the sets of features are (nearly) of the same size and every feature in the first image is
also present in the second one. This problem is made more difficult by the presence of noise in the images, and
thus in the features as well. Typically, due to the high resolution of most images, the number of features is large
and their dimension is relatively large as well (128 for SIFT and 64 for SURF). It is therefore important to
characterize the behavior of various matching procedures as a function of the number of features, the dimension and
the noise level.

\subsection{Main Contributions}

We consider four procedures of permutation estimation that naturally arise in this context. The first one is a greedy
procedure that sequentially assigns to each feature $X_i$ the closest feature $\Xdiese_j$ among those features that have
not been assigned at an earlier step. The three other estimators are defined as minimizers of the (profiled-)log-likelihood under
three different modeling assumptions. These three modeling assumptions are that the
noise level is constant across all the features (homoscedastic noise), that the noise level is variable (heteroscedastic noise) but known
and that the noise level is variable and unknown. The corresponding estimators are respectively called least sum of squares (LSS) estimator,
least sum of normalized squares (LSNS) estimator and least sum of logarithms (LSL) estimator.

We first consider the homoscedastic setting and show that all the considered estimators are consistent under similar
conditions on the minimal distance between distinct features $\k$. These conditions state that $\k$ is larger than some function of
the noise level $\s$, the sample size $n$ and the dimension $d$. This function is the same for the four aforementioned procedures
and is given, up to a multiplicative factor, by
\begin{align}\label{kappastar}
\k^*(\s,n,d) = \s \max((\log n)^{1/2}, (d\log n)^{1/4}).
\end{align}
Then, we prove that this expression provides the optimal rate of the separation distance in the sense that for some
absolute constant $c$  if $\k \le c\k^*(\s,n,d)$ then there is no procedure capable of consistently estimating $\pi^*$.

In the heteroscedastic case, we provide an upper bound on the identifiability threshold ensuring the consistency of the LSNS and
LSL estimators. Up to a proper normalization by the noise level, this bound is of the same form as (\ref{kappastar}) and, therefore,
the ignorance of the noise level does not seriously affect the quality of estimation. Furthermore, the LSL estimator is easy
to adapt to the case $n\not = m$ and is robust to the presence of outliers in the features. We carried out a small experimental
evaluation that confirms that in the heteroscedastic setting the LSL estimator is as good as the LSNS (pseudo-) estimator and
that they outperform the two other estimators: the greedy estimator and the least sum of squares. We also argue that the three
estimators stemming from the maximum likelihood methodology are efficiently computable either by linear programming or by the
Hungarian algorithm.

Note that different loss functions may be used for measuring the distance between an estimated permutation and the true one. Most results
of this paper are established for the 0\,-1 loss, which equals one if the estimator and the true permutation differ at least at one location
and equals 0 otherwise. However, it is of interest to analyze the situation with the Hamming distance as well, since it amounts to controlling
the proportion of the mismatched features and, hence, offers a more graduated evaluation of the quality of estimation.  We show that in the
case of the Hamming distance, in the regime of moderately large dimension (\ie $d\ge c\log n$ for some constant $c>0$) the rate of separation
is exactly the same as in the case of the 0\,-1 distance. The picture is more complex in the regime of small dimensionality $d=o(\log(n))$,
in which we get the same upper bound as for the 0\,-1 loss but the lower bound is expressed in terms of the logarithm of the packing number
of an $\ell_2$ ball of the symmetric group. We conjecture that this quantity is of the order of $n\log(n)$ and check this conjecture for
relatively small values of $n$. If this conjecture is correct, our lower bound coincides up to a multiplicative factor with the upper bound.

Finally, let us mention that some of the results of the present work have been presented in the AI-STATS 2013
conference and published
in the proceedings \citep{jmlr_CD13}.

\subsection{Plan of the Paper}

We introduce in Section~\ref{section:model} a model for the problem of matching two sets of features and of estimation of a permutation.
The estimating procedures analyzed in this work are presented in Section~\ref{section:estimators}, whereas their performances in terms
of rates of separation distance are described in Section~\ref{section:performance}. In Section~\ref{section:computationalaspects}, computational aspects of the
estimating procedures are discussed while Section~\ref{section:extensions} is devoted to the
statement of some extensions of our results. We report in Section~\ref{section:simulations} the results of some numerical experiments.
The proofs of the theorems and of the lemmas are postponed to Sections~\ref{section:theorems} and~\ref{section:lemmas}, respectively.

\section{Notation and Problem Formulation}\label{section:model}

We begin with formalizing the problem of matching two sets of features $\{X_1,\ldots,X_n\}$ and $\{\Xdiese_1,\ldots,\Xdiese_m\}$ with $n,m\ge 2$.
In what follows we assume that the observed features are randomly generated from the model
\begin{equation}\label{model}
\begin{cases}
X_i = \t_i + \s_i\xi_i \ , \\
\Xdiese_j = \tdiese_j + \sdiese_j\xidiese_j,
\end{cases}\quad i=1,\ldots,n \text{ and } j=1,\ldots,m
\end{equation}
where
\begin{itemize}\setlength{\itemsep}{-2pt}
\item $\btheta=\{\t_1,\ldots,\t_n\}$ and $\btdiese=\{\tdiese_1,\ldots,\tdiese_m\}$ are two collections of vectors from $\RR^d$, corresponding to
the original features, which are unavailable,
\item $\s_1,\ldots,\s_n,\sdiese_1,\ldots,\sdiese_m$ are positive real numbers corresponding to the levels of noise contaminating each feature,
\item $\xi_1,\ldots,\xi_n$ and $\xidiese_1,\ldots,\xidiese_m$ are two independent sets of i.i.d.\ random vectors drawn from
    the Gaussian distribution with zero mean and identity covariance matrix.
\end{itemize}

The task of feature matching consists in finding a bijection $\pi^*$ between the largest possible subsets $S_1$ and $S_2$ of $\{1,\ldots,n\}$ and $\{1,\ldots,m\}$ respectively, such that
\begin{equation}\label{pistar}
\forall\ i\in S_2,\quad \tdiese_i \equiv \t_{\pi^*(i)},
\end{equation}
where $\equiv$ is an equivalence relation that we call \textit{matching criterion}. The features that do not belong to $S_1$ or $S_2$ are called \textit{outliers}. To ease presentation, we mainly focus on the case where the matching criterion is the equality of two vectors. However, as discussed in Section~\ref{subsection5.2}, most results carry over the equivalence corresponding to equality of two vectors transformed by a given linear transformation.
Furthermore, it turns out that statistical inference for the matching problem is already quite involved when no outlier is present in the data. Therefore,
we make the following assumption
\begin{equation}\label{m=n}
m=n \qquad\text{and}\qquad S_1=S_2=\{1,\ldots,n\}.
\end{equation}
Note however that the procedures we consider below admit natural counterparts in the setting with outliers. We will also restrict ourselves to
noise levels satisfying some constraints. The two types of constraints we consider, referred to as homoscedastic and heteroscedastic setting, correspond to
the relations $\s_i=\sdiese_i=\s$, $\forall i=1,\ldots,n$, and $\s_{\pi^*(i)}=\sdiese_i$, $\forall i=1,\ldots,n$.

In this formulation, the data generating distribution is defined by the (unknown) parameters $\btheta$,
$\bsigma = (\s_i,\ldots,\s_n)$ and $\pi^*$.
In the problem of matching, we focus our attention on the problem of estimating the parameter $\pi^*$ only,
considering $\btheta$ and $\bsigma$ as nuisance parameters. In what follows, we denote by  $\prob_{\btheta,\bsigma,\pi^*}$ the probability distribution
of the vector $(X_1,\ldots,X_n,\Xdiese_1,\ldots,\Xdiese_n)$ defined by (\ref{model}) under the conditions
(\ref{pistar}) and (\ref{m=n}). We write $\esp_{\btheta,\bsigma,\pi^*}$ for the expectation
with respect to $\prob_{\btheta,\bsigma,\pi^*}$. The symmetric group, \ie the set of all permutations
of $\{1,\ldots,n\}$, will be denoted by $\Sn$.

We will use two measures of quality for quantifying the error of an estimator $\hat\pi$ of the permutation $\pi^*$. These
errors are defined as the 0\,-1 distance and the normalized Hamming distance between $\hat\pi$ and $\pi^*$, given by
\begin{equation}\label{dist}
\delta_{\text{0\,-1}}(\hat\pi,\pi^*)\triangleq\fcar_{\{\hat\pi\not=\pi^*\}},\qquad
\delta_H(\hat\pi,\pi^*)\triangleq\frac1n\sum_{k=1}^n \fcar_{\{\hat\pi(k)\not=\pi^*(k)\}}.
\end{equation}
Our ultimate goal is to design estimators that have an expected error smaller than a prescribed level $\alpha$
under the weakest possible conditions on the nuisance parameter $\btheta$. The estimation of the permutation or, equivalently,
the problem of matching is more difficult when the features are hardly distinguishable. To quantify this phenomenon, we introduce
the separation distance $\k(\btheta)$ and the relative separation distance $\bar\k(\btheta,\bsigma)$, which measure the minimal distance
between distinct features and the minimal distance-to-noise ratio, respectively. The precise definitions are
\begin{equation}
\k(\btheta) \triangleq \min_{i\neq j} \|\t_i-\t_j\|,\qquad
\bar\k(\btheta,\bsigma) \triangleq \min_{i\neq j} \frac{\|\t_i-\t_j\|}{(\s_i^2+\s_j^2)^{1/2}}. \label{kappa}
\end{equation}
We will see that in the heteroscedastic case the last quantity is more suitable for characterizing the behavior of the estimators
than the first one.

Clearly, if $\bar\k(\btheta,\bsigma)=0$, then the parameter $\pi^*$ is nonidentifiable, in the sense that there exist
two different permutations $\pi^*_1$ and $\pi^*_2$ such that the distributions $\prob_{\btheta,\bsigma,\pi^*_1}$
and $\prob_{\btheta,\bsigma,\pi^*_2}$ coincide. Therefore, the condition $\bar\k(\btheta,\bsigma)>0$ is necessary
for the existence of consistent estimators of $\pi^*$. Furthermore, good estimators are those consistently estimating $\pi^*$
even if $\bar\k(\btheta,\bsigma)$ is small. To give a precise sense to these considerations, let $\a\in(0,1)$ be a prescribed
tolerance level and let us call \textit{perceivable separation distance} of a given estimation procedure $\hat\pi$ the quantity
\begin{equation*}
\bar\k_\a(\hat\pi) \triangleq \inf\Big\{\k>0 \suchthat \max_{\pi\in\Sn} \sup_{\bar\k(\btheta,\bsigma)>\k} \prob_{\btheta,\bsigma,\pi} (\hat\pi\neq\pi)\le\a\Big\},
\end{equation*}
where we skip the dependence on $n$, $d$, $\bsigma$ and $\bsigmadiese$. Here, the perceivable separation distance
is defined with respect to the 0\,-1 distance, the corresponding definition for the Hamming distance is obtained
by replacing $\prob_{\btheta,\bsigma,\pi} (\hat\pi\neq\pi)$ by $\esp_{\btheta,\bsigma,\pi} [\delta_H(\hat\pi,\pi)]$.
Finally, we call \textit{minimax separation distance} the smallest possible {perceivable separation distance} achieved by an estimator
$\hat\pi$, \ie
\begin{equation}\label{mmx}
\bar\k_\a \triangleq \inf_{\hat\pi}~\bar\k_\a(\hat\pi),
\end{equation}
where the infimum is taken over all possible estimators of $\pi^*$. In the following sections, we establish nonasymptotic upper and lower bounds on
the minimax separation distance which coincide up to a multiplicative constant independent of $n$, $d$, $\bsigma$ and $\bsigmadiese$. We also
show that a suitable version of the maximum profiled likelihood estimator is minimax-separation-rate-optimal both in the homoscedastic and heteroscedastic
settings.

\section{Estimation Procedures}\label{section:estimators}


As already mentioned, we will consider four estimators. The simplest one, called greedy algorithm and
denoted by $\pi^{\rm gr}$ is defined as follows: $\pi^{\rm gr}(1) = \arg\min_{j\in\{1,\ldots,n\}} \|X_j-\Xdiese_1\|$
and, for every $i\in\{ 2,\ldots,n\}$, recursively define
\begin{align}\label{greedy}
\pi^{\rm gr}(i)\triangleq \arg\min_{j\not\in\{\pi^{\rm gr}(1),\ldots,\pi^{\rm gr}(i-1)\}} \|X_j-\Xdiese_i\|.
\end{align}
A drawback of this estimator is that it is not symmetric: the resulting permutation depends on the initial numbering of the
features. However, we will show that this estimator is minimax-separation-rate-optimal in the homoscedastic setting.


A common approach for avoiding incremental estimation and taking into consideration all the observations at the same time
consists in defining the estimator $\hat\pi$ as a maximizer of the profiled likelihood. In the homoscedastic case, which is
the first setting we studied, the computations lead to the estimator
\begin{equation}
\pi^{\textrm{LSS}}  \triangleq \arg\min_{\pi\in\Sn} \sum_{i=1}^n \|X_{\pi(i)}-\Xdiese_i\|^2. \label{LSS}
\end{equation}
which will be referred to as the \textit{Least Sum of Squares} (LSS) estimator.

The LSS estimator takes into account the distance between the observations irrespectively of the noise levels.
The fact of neglecting the noise levels, while harmless in the homoscedastic setting, turns out to cause serious
loss of efficiency in terms of the perceivable distance of separation in the setting of heteroscedastic noise.
Yet, in the latter setting, the distance between the observations is not as relevant as the signal-to-noise ratio
${\|\t_i-\t_j\|^2}/({\s_i^2+\s_j^2})$. Indeed, when the noise levels are small, two noisy but distinct vectors
are easier to distinguish than when the noise levels are large.

The computation of the maximum likelihood estimator in the heteroscedastic case with known noise levels also suggests that
the signal-to-noise ratio should be taken into account. In this setting, the likelihood maximization leads to the
\textit{Least Sum of Normalized Squares} (LSNS) estimator
\begin{equation}
\pi^{\textrm{LSNS}} \triangleq \arg\min_{\pi\in\Sn}\sum_{i=1}^n \frac{\|X_{\pi(i)}-\Xdiese_i\|^2}{\s_{\pi(i)}^2+{\sdiese_i}^2}. \label{LSNS}
\end{equation}
We will often call the LSNS a pseudo-estimator, to underline the fact that it requires the knowledge of the noise levels $\s_i$ which
are generally unavailable.

In the general setting, when no information on the noise levels is available, the likelihood is maximized over all nuisance parameters (features and noise levels). But this problem is underconstrained, and the result of this maximization is $\pinf$. This can be circumvented by assuming a proper relation between the noise levels. As mentioned earlier, we chose the  assumption
\begin{equation}\label{sigma_h}
\forall\ i\in\{1,\ldots,n\},\quad \sdiese_i = \s_{\pi^*(i)}.
\end{equation}
The maximum likelihood estimator under this constraint is the \textit{Least Sum of Logarithms} (LSL) defined as
\begin{equation}
\pi^{\textrm{LSL}}  \triangleq \arg\min_{\pi\in\Sn} \sum_{i=1}^n \log \|X_{\pi(i)}-\Xdiese_i\|^2. \label{LSL}
\end{equation}
We will prove that this estimator is minimax-rate-optimal both in the homoscedastic and the heteroscedastic cases.

\section{Performance of the Estimators}\label{section:performance}

The purpose of this section is to assess the quality of the aforementioned procedures. To this end, we present
conditions for the consistency of these estimators in the form of upper bounds on their perceivable separation
distance. Furthermore, to compare this bounds with the minimax separation distance, we establish lower bounds on the latter
and prove that it coincides up to a constant factor with the perceivable separation  distance of the LSL estimator.

\subsection{Homoscedastic Setup}
We start by considering the homoscedastic case, in which upper and lower bounds matching up to a constant are obtained for all the estimators
introduced in the previous section.

\begin{theorem}\label{upperbound1}
Let $\a\in (0,1)$ be a tolerance level and $\s_j=\sdiese_j=\s$ for all $j\in\{1,\ldots,n\}$.
If $\hat\pi$ denotes any one of the estimators (\ref{greedy})-(\ref{LSL}), then
\begin{equation*}
\bar\k_\a(\hat\pi)\le 4 \max\Big\{\Big(2\log \frac{8n^2}{\a}\Big)^{1/2}, \Big(d\log\frac{4n^2}{\a}\Big)^{1/4} \Big\}.
\end{equation*}
\end{theorem}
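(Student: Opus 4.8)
The plan is to show that for $\bar\kappa(\btheta,\bsigma) = \kappa/(\s\sqrt2)$ (in the homoscedastic case $\s_i = \s$, so the relative separation is just $\kappa(\btheta)/(\s\sqrt2)$) large enough, each of the four estimators recovers $\pi^*$ with probability at least $1-\a$. Since $\pi^*$ plays a symmetric role, I would fix $\pi^* = \id$ without loss of generality by relabelling, so that $\Xdiese_i = \t_i + \s\xidiese_i$ and $X_i = \t_i + \s\xi_i$. The key object is the event that some ``bad'' permutation $\pi\neq\id$ beats the identity in the relevant criterion; I want to union-bound its probability. For the least-sum-of-squares estimator the criterion is $\sum_i\|X_{\pi(i)}-\Xdiese_i\|^2$, and $\pi^{\mathrm{LSS}}\neq\id$ forces the existence of a $\pi\neq\id$ with $\sum_i(\|X_{\pi(i)}-\Xdiese_i\|^2 - \|X_i-\Xdiese_i\|^2)\le 0$.

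The core estimate is therefore a deviation bound for a single transposition, or more cleverly, a reduction of a general $\pi$ to its cycles: since the difference of sums decomposes over the cycles of $\pi$, it suffices to bound, for each subset/cycle structure, the probability that a given cycle is ``favourable''. Concretely, for a transposition swapping $i$ and $j$, $\|X_j-\Xdiese_i\|^2 + \|X_i-\Xdiese_j\|^2 - \|X_i-\Xdiese_i\|^2 - \|X_j-\Xdiese_j\|^2$ expands to $-2\langle \t_i-\t_j,\, \s(\xi_i-\xi_j) + \s(\xidiese_j-\xidiese_i)\rangle + 2\|\t_i-\t_j\|^2$ after cancellation, i.e.\ $2\|\t_i-\t_j\|^2 - 2\s\langle \t_i-\t_j, g_{ij}\rangle$ where $g_{ij}$ is Gaussian with variance $4$ in the direction $\t_i-\t_j$. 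So this is negative only if a univariate Gaussian exceeds roughly $\|\t_i-\t_j\|/(2\s)$, an event of probability $\le \exp(-\|\t_i-\t_j\|^2/(8\s^2)) \le \exp(-\kappa^2/(8\s^2))$. For a general permutation with a cycle of length $\ell$ one gets a sum of $\ell$ such inner-product terms against $\t$-differences and a positive quadratic lower bound, leading to a bound like $\exp(-c\,\ell\,\kappa^2/\s^2)$ per cycle, and one then counts permutations whose nonidentity part involves $k$ elements (at most $\binom{n}{k}k! \le n^k$), so the union bound contributes a factor $n^k$ against $\exp(-ck\kappa^2/\s^2)$; summing a geometric series in $k\ge 2$ and asking the whole thing to be $\le\a$ gives a condition $\kappa^2/\s^2 \gtrsim \log(n^2/\a)$, i.e.\ the first term $\s(2\log(8n^2/\a))^{1/2}$ in the bound. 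I would run the analogous computation for the greedy, LSNS and LSL criteria; because $\s_i=\s$ is constant, LSNS coincides with LSS up to the constant $2\s^2$, and LSL replaces squared distances by their logarithms, which requires controlling $\log\|X_i-\Xdiese_i\|^2$ from below (a $\chi^2_d$-type lower tail, no signal) and $\log\|X_{\pi(i)}-\Xdiese_i\|^2$ from above; this is where the dimension $d$ enters and produces the second term $\s(d\log(4n^2/\a))^{1/4}$, since the relevant concentration of $\|X_i-\Xdiese_i\|^2 \sim 2\s^2\chi^2_d$ around $2\s^2 d$ has fluctuations of order $\s^2\sqrt d$, and distinguishing the matched pair from a mismatched pair at distance $\kappa$ requires $\kappa^2 \gtrsim \sqrt{d}\cdot\s^2\sqrt{\log(\cdots)}$, i.e.\ $\kappa \gtrsim \s(d\log(\cdots))^{1/4}$.

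The main obstacle, and the part that needs the most care, is the greedy estimator and the LSL estimator. For greedy, the difficulty is that the estimator is sequential and not defined by a global optimization, so the ``$\pi^{\mathrm{gr}}\neq\id$ implies a bad permutation is favourable'' reduction does not apply directly; instead one argues that if greedy errs for the first time at step $i$, then some unmatched $X_j$ with $j\neq i$ is closer to $\Xdiese_i$ than $X_i$ is, and one bounds $\prob(\|X_j-\Xdiese_i\| \le \|X_i-\Xdiese_i\|)$ over all $j\neq i$ and all $i$ — again a sum of $\le n^2$ terms each of order $\exp(-c\kappa^2/\s^2)$ in low dimension, with a $\sqrt d$-correction in high dimension as above. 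For LSL the obstacle is that the logarithm is not a nice quadratic, so one must first show that with high probability all the matched residuals $\|X_i-\Xdiese_i\|^2$ lie in a window $[2\s^2 d(1-\eta), 2\s^2 d(1+\eta)]$ for a suitable $\eta$ depending on $d$ and $\log(n/\a)$ (via a $\chi^2_d$ concentration / union bound over $n$ coordinates), then on that event linearize $\log$ and reduce to essentially the LSS-type analysis but with the effective ``noise scale'' for the comparison being $\s^2\sqrt d$ rather than $\s^2$, which is exactly what switches on the $(d\log n)^{1/4}$ regime. Once all four estimators are handled, I collect the worst of the four thresholds — they turn out to coincide up to the universal constant $4$ and the explicit logarithmic factors stated — and conclude $\bar\kappa_\a(\hat\pi) \le 4\max\{(2\log(8n^2/\a))^{1/2}, (d\log(4n^2/\a))^{1/4}\}$, after renormalizing $\kappa$ by $\s\sqrt2$ to pass to the relative separation distance in the statement.
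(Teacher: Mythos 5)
There is a genuine error in your treatment of the LSS criterion, and it propagates into your whole account of where the dimension enters. For the transposition $(i\;j)$ the difference of criteria is
\begin{align*}
\|X_j-\Xdiese_i\|^2+\|X_i-\Xdiese_j\|^2&-\|X_i-\Xdiese_i\|^2-\|X_j-\Xdiese_j\|^2\\
&= 2\|\t_i-\t_j\|^2+2\s\big\langle \t_i-\t_j,\ \xi_i-\xi_j+\xidiese_i-\xidiese_j\big\rangle
+2\s^2\big\langle \xi_i-\xi_j,\ \xidiese_i-\xidiese_j\big\rangle,
\end{align*}
so the quadratic noise terms do \emph{not} cancel: a bilinear Gaussian cross term with standard deviation of order $\s^2\sqrt d$ survives. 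Hence your claim that $\pi^{\textrm{LSS}}$ (and LSNS) only requires the dimension-free condition $\k\gtrsim\s(\log(n^2/\a))^{1/2}$ is false; controlling this $\chi^2$-scale fluctuation under the union bound is precisely what produces the $(d\log n)^{1/4}$ regime, for LSS, LSNS and the greedy rule just as for LSL. Indeed, were your LSS analysis correct, it would contradict Theorem~\ref{lowerbound1}, whose lower bound applies to every estimator and shows that no procedure can be consistent when $\k\le c\,\s\,(d\log n)^{1/4}$ and $d\gg\log n$. (You implicitly concede the point for the greedy estimator, where you allow a ``$\sqrt d$-correction'' in the pairwise comparison; the transposition difference is just a sum of two such pairwise differences, and their $\chi^2$ fluctuations cancel only partially.) The attribution of the $d^{1/4}$ term to the logarithm in LSL, to be handled by linearizing $\log$ on a concentration event, is therefore misplaced.

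Beyond this, your route (union bound over all permutations with a cycle decomposition for LSS, a separate sequential argument for greedy, a linearization for LSL) is far heavier than what is needed. The paper's proof treats the four estimators at once: on the event $\mathcal A=\bigcap_{i}\bigcap_{j\neq\pi^*(i)}\{\|X_{\pi^*(i)}-\Xdiese_i\|<\|X_j-\Xdiese_i\|\}$, i.e.\ each $\Xdiese_i$'s nearest neighbour among the $X_j$ is its true match, the greedy, LSS, LSNS and LSL estimators all coincide with $\pi^*$ (for the global criteria this holds termwise, using only monotonicity of the square and of the logarithm). The complement of $\mathcal A$ is a union of at most $n^2$ pairwise events, namely the event $\bar\O$ of (\ref{inclusion}), whose probability is already bounded in the proof of Theorem~\ref{upperbound2} by splitting into the linear Gaussian maximum $\z_1$ and the normalized $\chi^2$ deviation $\z_2$ (via Lemma~\ref{concentration}); this yields both regimes and the stated constants. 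If you repair your computations by keeping the bilinear term, your pairwise bounds reduce to essentially that same estimate, but the reduction through $\mathcal A$ spares you the sum over permutations and the log-linearization entirely.
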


An equivalent way of stating this result is that if
\begin{equation*}
\k = 4\s \max\Big\{\Big(4\log \frac{8n^2}{\a}\Big)^{1/2}, \Big(4d\log\frac{4n^2}{\a}\Big)^{1/4} \Big\}
\end{equation*}
and $\T_\k$ is the set of all $\btheta\in\RR^{n\times d}$ such that $\k(\btheta)\ge \k$, then
\begin{equation*}
\max_{\pi^*\in\Sn}\sup_{\btheta\in\T_\k}\prob_{\btheta,\bsigma,\pi^*}(\hat\pi\neq\pi^*) \le \a
\end{equation*}
for all the estimators defined in Section~\ref{section:estimators}.
Note that this result is nonasymptotic. Furthermore, it tells us that the perceivable separation distance of the
procedures under consideration is at most of the order of
\begin{equation}
\max\Big\{(\log n)^{1/2}, (d \log n)^{1/4}\Big\}.
\end{equation}
It is interesting to observe that there are two regimes in this rate, the boundary of which corresponds to the case where $d$ is of the order of $\log n$.
For dimensions that are significantly smaller than $\log n$, the perceivable distance of separation is dimension free. On the other hand,
when $d$ is larger than $c\log n$ for some absolute constant $c>0$, the perceivable distance of separation deteriorates with increasing
$d$ at the polynomial rate $d^{1/4}$. However, this result does not allow us to deduce any hierarchy between the four estimators, since it
provides the same upper bound for all of them. Moreover, as stated in the next theorem, this bound is optimal up to a multiplicative constant.

\begin{theorem}\label{lowerbound1}
Assume that $n\ge6$, $\T_\k$ is the set of all $\btheta\in\RR^{n\times d}$ such that $\k(\btheta)\ge \k$.
Then there exist two absolute constants $c,C>0$ such that for
\begin{equation*}
\k=2^{-5/2}{\s}\max\Big\{(\log n)^{1/2}, (c d \log n)^{1/4}\Big\},
\end{equation*}
the following lower bound holds
\begin{equation*}
\inf_{\hat\pi} \max_{\pi^*\in\Sn} \sup_{\btheta\in\T_\k}\prob_{\btheta,\bsigma,\pi^*}\big(\hat\pi\neq\pi^*\big) >  C,
\end{equation*}
where the infimum is taken over all permutation estimators.
\end{theorem}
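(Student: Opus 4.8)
The plan is the classical reduction of a minimax risk to a Bayes risk, using a prior on $(\btheta,\pi^*)$ that randomizes \emph{both} the permutation and the positions of the features. Set $p=\lfloor n/2\rfloor$, split $\{1,\dots,n\}$ into the blocks $B_i=\{2i-1,2i\}$, $i=1,\dots,p$ (leaving one index fixed by $\pi^*$ if $n$ is odd), and fix centers $a_1,\dots,a_p\in\RR^d$ pairwise at distance, say, $10\k$. Draw $v_1,\dots,v_p$ i.i.d.\ uniformly on the unit sphere of $\RR^d$ and put $\t_{2i-1}=a_i$, $\t_{2i}=a_i+\k v_i$, so that $\k(\btheta)=\k$ for every realization and hence $\btheta\in\T_\k$; independently draw i.i.d.\ bits $b_1,\dots,b_p$ uniform on $\{0,1\}$ and let $\pi^*$ be the product, over $\{i:b_i=1\}$, of the transpositions exchanging $2i-1$ and $2i$. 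Denote this prior by $\mu$. Since $\mu$ is supported on $\T_\k\times\Sn$, for every estimator $\hat\pi$ one has $\max_{\pi^*}\sup_{\btheta\in\T_\k}\prob_{\btheta,\bsigma,\pi^*}(\hat\pi\neq\pi^*)\ge\prob_\mu(\hat\pi\neq\pi^*)$, so it suffices to bound the Bayes error below by an absolute constant.

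Next I would decouple across blocks. Under $\mu$, the data attached to $B_i$, namely $D_i=(X_{2i-1},X_{2i},\Xdiese_{2i-1},\Xdiese_{2i})$, depends only on $(v_i,b_i)$ and on the noise of block $i$, and the triples $(v_i,b_i,\text{noise}_i)$ are mutually independent; moreover, since the centers are far apart, $\{\hat\pi=\pi^*\}$ forces $\hat\pi$ to coincide with $\pi^*$ on each block, i.e.\ to recover each $b_i$. Marginalizing out the $v_i$'s, the $D_i$ are independent given $(b_i)$ with $D_i\sim P_{b_i}$, so the optimal recovery of the bit string decouples channelwise and
\[
\prob_\mu(\hat\pi=\pi^*)\ \le\ \max_{\tilde b}\prob_\mu\big(\tilde b=(b_i)_i\big)\ =\ (1-q)^{p},
\]
where $q$ is the Bayes error of a single binary sub-experiment ($P_0$ vs.\ $P_1$ with uniform prior).

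The crux, and the step I expect to be the main obstacle, is a lower bound $q\ge c_0\,\bar\Phi\!\big(C_1\,\k^2/(\s\sqrt{\k^2+\s^2d})\big)$ with absolute $c_0,C_1>0$. Integrating out $v_i$ in the Gaussian likelihood of $D_i$ turns the $v_i$-dependent factor into an increasing Bessel-type function $\psi$ of $\|\cdot\|$ evaluated at an explicit linear combination of the block observations; consequently the Bayes test reduces to comparing two such norms, equivalently to the sign of a degree-two statistic $T$ in the Gaussian noise whose mean is of order $-\k^2$ and whose standard deviation is of order $\s\sqrt{\k^2+\s^2d}$ (one checks $\var(T)\asymp \s^2\k^2+\s^4 d$). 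To lower bound $q=\prob(T>0)$ rigorously one can condition on all but one of the Gaussian blocks so that $T$ becomes conditionally Gaussian, control its conditional mean and variance on an event of probability $\ge c_0$, and apply the Gaussian lower tail bound; the edge cases in which this control is delicate are exactly those in which $\k^2\lesssim\s^2\sqrt d$, i.e.\ in which $q$ is already bounded below by a constant. The randomization of the feature positions is essential here: for a \emph{deterministic} $\btheta$ the vectors $X_1,\dots,X_n$ have a law independent of $\pi^*$, hence are ancillary, and a deterministic construction can only reach the $(\log n)^{1/2}$ part of the rate; the $d^{1/4}$ effect appears because under $\mu$ the $X_i$ convey only imperfect information about $\btheta$ in large dimension.

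Finally I would assemble the bound. Using $\k^2/\sqrt{\k^2+\s^2d}\asymp\min\{\k,\ \k^2/(\s\sqrt d)\}$, the hypothesis $\k=2^{-5/2}\s\max\{(\log n)^{1/2},(cd\log n)^{1/4}\}$ forces, whichever term attains the maximum, $\min\{\k/\s,\ \k^2/(\s^2\sqrt d)\}\le C_2(\log n)^{1/2}$ for an absolute $C_2$ once $c$ is chosen small; then $\bar\Phi(x)\ge c\,(1+x)^{-1}e^{-x^2/2}$ yields $q\ge c_3\,n^{-\gamma}/(\log n)^{1/2}$ uniformly over $n\ge6$, with exponent $\gamma=(C_1C_2)^2/2<1$ — this inequality is exactly what fixes the numerical constants $2^{-5/2}$ and $c$. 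Hence $pq\ge\inf_{m\ge6}\big(\tfrac{m}{2}c_3 m^{-\gamma}/(\log m)^{1/2}\big)=:\kappa_0>0$, so $\prob_\mu(\hat\pi=\pi^*)\le(1-q)^{p}\le e^{-pq}\le e^{-\kappa_0}$, and therefore $\inf_{\hat\pi}\max_{\pi^*}\sup_{\btheta\in\T_\k}\prob_{\btheta,\bsigma,\pi^*}(\hat\pi\neq\pi^*)\ge 1-e^{-\kappa_0}=:C>0$.
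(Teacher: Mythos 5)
Your strategy is sound and it is genuinely different from the paper's. The paper proves the lower bound (via the heteroscedastic Theorem~\ref{lowerbound2}, of which Theorem~\ref{lowerbound1} is the homoscedastic special case) by a two-case application of Tsybakov's many-hypotheses lemma (Lemma~\ref{Tsybakov}): in the regime $\kappa\lesssim\sigma(\log n)^{1/2}$ the hypotheses are single transpositions applied to a deterministic least-favorable configuration, with KL of order $\kappa^2/\sigma^2$; in the regime $d\gtrsim\log n$ the features are randomized by a hypercube sign prior $\{\pm\epsilon\}^{n\times d}$ and the KL between the resulting mixtures is computed explicitly through $\log\cosh$ bounds (Lemma~\ref{kullback_leibler_divergence_heteroscedastic}), giving KL of order $\kappa^4/(\sigma^4 d)$, plus a control of $\mu(\mathcal E\setminus\bar\T_\kappa)$. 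You instead run an Assouad-type argument: a block prior (spherical offsets $v_i$, independent bits $b_i$), exact factorization into $p$ independent binary channels, and a lower bound on the per-channel Bayes error $q$ from the explicit Bayes test, which after integrating out $v_i$ indeed reduces to the sign of $T_i=\|X_{2i}+\Xdiese_{2i-1}-2a_i\|^2-\|X_{2i}+\Xdiese_{2i}-2a_i\|^2$, with mean $-3\kappa^2$ and standard deviation of order $\sigma\sqrt{\kappa^2+\sigma^2d}$; this matches the paper's KL scales in both regimes, so the scheme delivers the stated rate with no case split, and it even yields a risk bound of the form $1-e^{-pq}$ (close to $1$, and reusable for the Hamming loss, in the spirit of Theorem~\ref{lowerbound4}), whereas the paper's route avoids any tail analysis of quadratic statistics and extends directly to heteroscedastic noise. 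Two points in your crux step still need care when writing it out. First, ``condition on all but one of the Gaussian blocks'' works only for one choice: you must condition on $(v_i,\xidiese_{2i-1},\xidiese_{2i})$ and leave the noise of $X_{2i}$ free, since the quadratic terms in $X_{2i}$ cancel in $T_i$ but those in the $\Xdiese$'s do not (conditioning the other way leaves a noncentral chi-square, not a Gaussian); with that choice the conditional mean/variance control on a constant-probability event is routine (Chebyshev for the fluctuation of the conditional mean, a sign condition on $v_i^\top(\xidiese_{2i-1}-\xidiese_{2i})$ for the conditional variance). Second, the constant bookkeeping is not symmetric in the two branches: the factor $2^{-5/2}$ is fixed by the statement, so in the branch $d\lesssim\log n$ you cannot hide your constant $C_1$ inside $c$ and you need (roughly) $C_1<8$; this does hold for the argument above (one gets $C_1$ close to $3/2$), and in that branch one can also bypass the mixture entirely via a genie argument ($v_i$ revealed), which gives $q\ge\bar\Phi\big(\kappa/(\sqrt2\,\sigma)\big)\gtrsim n^{-1/16}$ and settles the small-$d$ case with explicit constants, while $c$ absorbs everything in the branch $d\gtrsim\log n$.
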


An equivalent way of stating this result is to say that the minimax distance of separation satisfies the inequality
$$
\bar\k_\a \ge 2^{-5/2}\max\big\{(\log n)^{1/2}, (c d \log n)^{1/4}\big\}.
$$
Combined with Theorem~\ref{upperbound1}, this implies that the minimax rate of separation is given by the expression
$\max\big\{(\log n)^{1/2}, (d \log n)^{1/4}\big\}$.

In order to avoid any possible confusion, we emphasize that the rate obtained in this and subsequent sections concerns the
speed of decay of the separation distance and not the estimation risk measured by $\prob_{\btheta,\bsigma,\pi^*}\big(\hat\pi\neq\pi^*\big)$.
For the latter, considering $\kappa$ as fixed, one readily derives from Theorem~\ref{upperbound1} that
\begin{align}\label{rem:a}
\max_{\pi^*\in\Sn} \sup_{\btheta\in\T_\k}\prob_{\btheta,\bsigma,\pi^*}\big(\hat\pi\neq\pi^*\big)
\le \max\Big\{8n^2\exp\Big(-\frac{\kappa^2}{2^{6}\sigma^2}\Big),4n^2\exp\Big(-\frac{\kappa^4}{2^{10}d\sigma^4}\Big)\Big\}
\end{align}
for the four estimators $\hat\pi$ defined in the previous section by equations (\ref{greedy})-(\ref{LSL}). We do not know whether the right-hand side of this
inequality is the correct rate for the minimax risk $R^{\rm mmx} =\inf_{\hat\pi}  \sup_{(\btheta,\pi^*)\in\T_\k\times\Sn}
\prob_{\btheta,\bsigma,\pi^*}\big(\hat\pi\neq\pi^*\big)$. In fact, one can adapt the proof of Theorem~\ref{lowerbound1}
to get a lower bound on  $R^{\rm mmx}$ which is of the same form as the right-hand side
in (\ref{rem:a}), but with constants $2^6$ and $2^{10}$ replaced by smaller ones. The ratio of such a lower bound and the upper bound in
(\ref{rem:a}) tends to 0 and, therefore, does not provide the minimax rate of estimation, in the most common sense of the
term. However, one may note that the minimax rate of separation established in this work is the analogue of the
minimax rate of estimation of the Bahadur risk, see \cite{Bahadur,Korostelev,KorostelevSpok}.

\subsection{Heteroscedastic Setup}

We switch now to the heteroscedastic setting, which allows us to discriminate between the four procedures.
Note that the greedy algorithm, the LSS and the LSL have a serious advantage over the LSNS since they can
be computed without knowing the noise levels $\bsigma$.

\begin{theorem}\label{upperbound2}
Let $\a\in(0,1)$ and condition (\ref{sigma_h}) be fulfilled. If $\hat\pi$ is either $\pi^{\textrm{LSNS}}$ (if the noise
levels $\s_i,\sdiese_i$ are known) or $\pi^{\textrm{LSL}}$ (when the noise levels are unknown),
then
\begin{equation*}
\bar\k_\a(\hat\pi) \leq 4\max\Big\{\Big(2\log \frac{8n^2}{\a}\Big)^{1/2}, \Big(d\log\frac{4n^2}{\a}\Big)^{1/4} \Big\}.
\end{equation*}
\end{theorem}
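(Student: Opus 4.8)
The plan is to reduce the heteroscedastic case to the homoscedastic analysis already carried out for Theorem~\ref{upperbound1}, by exploiting the structural constraint (\ref{sigma_h}), namely $\sdiese_i=\s_{\pi^*(i)}$. Under this constraint, for the correct pair $(X_{\pi^*(i)},\Xdiese_i)$ the difference $X_{\pi^*(i)}-\Xdiese_i=\t_{\pi^*(i)}-\tdiese_i+\s_{\pi^*(i)}\xi_{\pi^*(i)}-\sdiese_i\xidiese_i$ has, using $\tdiese_i\equiv\t_{\pi^*(i)}$ and $\sdiese_i=\s_{\pi^*(i)}$, the law of $\s_{\pi^*(i)}(\xi_{\pi^*(i)}-\xidiese_i)$, i.e.\ a centered Gaussian with covariance $2\s_{\pi^*(i)}^2 I_d$. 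The key point is that the LSNS (resp.\ LSL) objective weights each term by $\s_{\pi(i)}^2+\sdiese_i{}^2$ (resp.\ takes its logarithm), and these weights are exactly tuned so that the ``correct'' terms become, after normalization, i.i.d.\ across $i$ with the \emph{same} effective scale --- just as in the homoscedastic setting. So the first step is to write out the event $\{\hat\pi\neq\pi^*\}$ and, exactly as for Theorem~\ref{upperbound1}, bound it by a union over transpositions: if $\hat\pi\neq\pi^*$ then there is a pair $i\neq j$ on which the objective prefers the swapped assignment to the correct one.

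Second, for each such pair I would compare the two relevant contributions. For the LSNS this is the inequality
\[
\frac{\|X_{\pi^*(j)}-\Xdiese_i\|^2}{\s_{\pi^*(j)}^2+\sdiese_i{}^2}+\frac{\|X_{\pi^*(i)}-\Xdiese_j\|^2}{\s_{\pi^*(i)}^2+\sdiese_j{}^2}\ \le\ \frac{\|X_{\pi^*(i)}-\Xdiese_i\|^2}{\s_{\pi^*(i)}^2+\sdiese_i{}^2}+\frac{\|X_{\pi^*(j)}-\Xdiese_j\|^2}{\s_{\pi^*(j)}^2+\sdiese_j{}^2},
\]
and for the LSL the analogous inequality with $\log\|\cdot\|^2$ in place of $\|\cdot\|^2/(\cdots)$. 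Substituting the model (\ref{model}) and using $\sdiese_i=\s_{\pi^*(i)}$, every term on the right is a normalized chi-square-type quantity with $d$ degrees of freedom and no signal, while the left-hand terms carry a signal shift of squared norm at least $\bar\k(\btheta,\bsigma)^2$ after normalization (this is precisely where the definition (\ref{kappa}) of $\bar\k$, dividing by $(\s_i^2+\s_j^2)^{1/2}$, enters). I would then invoke the same Gaussian concentration lemmas used in the proof of Theorem~\ref{upperbound1} --- tail bounds for $\chi^2_d$ and for noncentral $\chi^2_d$, or equivalently a Hanson--Wright / Bernstein estimate --- to show that each such bad pairwise event has probability at most $\exp(-c\min\{\kappa^2,\kappa^4/d\})$ with the constants appearing in the statement, then take a union bound over the at most $n^2$ pairs, and choose $\kappa$ so that the total is $\le\a$. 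For the LSL one extra manipulation is needed: linearizing $\log$, one writes $\log\|X_{\pi^*(i)}-\Xdiese_i\|^2 - \log(2\s_{\pi^*(i)}^2)$ and controls the deviation of $\|X_{\pi^*(i)}-\Xdiese_i\|^2/(2\s_{\pi^*(i)}^2)$ around $d$, so that the log-objective comparison again reduces to a concentration statement; the unknown scale $\s_{\pi^*(i)}$ cancels in the difference of logarithms, which is the whole reason LSL works without knowing $\bsigma$.

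The main obstacle, I expect, is not the Gaussian tail bounds themselves --- those are presumably packaged into the lemmas of Section~\ref{section:lemmas} and can be reused verbatim --- but rather handling the mismatch of scales on the \emph{left} side of the pairwise inequality: the swapped term $\|X_{\pi^*(j)}-\Xdiese_i\|^2/(\s_{\pi^*(j)}^2+\s_{\pi^*(i)}^2)$ mixes noise of level $\s_{\pi^*(j)}$ with the signal gap $\t_{\pi^*(j)}-\t_{\pi^*(i)}$ normalized by a \emph{different} denominator, so one must be careful that the signal-to-noise ratio controlling this term is still bounded below by (a constant times) $\bar\k(\btheta,\bsigma)^2$. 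Concretely, $\|\t_{\pi^*(j)}-\t_{\pi^*(i)}\|^2/(\s_{\pi^*(j)}^2+\s_{\pi^*(i)}^2)\ge\bar\k^2$ directly by definition, so this is in fact immediate; the real care is in separating the deterministic shift from the Gaussian fluctuation in a noncentral-$\chi^2$ tail bound with the right dependence on both the centrality parameter and $d$, and in verifying that the numerical constants $2$, $4$, $8$ in the statement come out of the union bound and the concentration inequalities. Once the per-pair probability is pinned down in the form $4n^2\exp(-\kappa^4/(2^{10}d))$ for the large-$d$ regime and $8n^2\exp(-\kappa^2/2^7)$ for the small-$d$ regime, setting these equal to $\a$ and solving for $\kappa$ yields exactly the claimed bound, identical in form to Theorem~\ref{upperbound1} but now in terms of the relative separation $\bar\k(\btheta,\bsigma)$.
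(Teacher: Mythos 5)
Your overall architecture (reduce to pairwise comparison events, then Gaussian/noncentral-$\chi^2$ concentration, then a union bound over $\sim n^2$ events and solve for $\kappa$) is the right one, and its second half matches the paper. The gap is in your first step: the reduction of $\{\hat\pi\neq\pi^*\}$ to a union over \emph{transpositions} is not a valid event inclusion. The LSNS and LSL estimators are global minimizers over $\Sn$, so $\hat\pi\neq\pi^*$ only tells you that the full objective at some $\pi\neq\pi^*$ is at most the objective at $\pi^*$; it does not follow that any single swap beats $\pi^*$. A permutation can beat $\pi^*$ only through a longer cycle: e.g.\ with costs $c_{11}=c_{22}=c_{33}=1$, $c_{12}=c_{23}=c_{31}=0$ and $c_{21}=c_{13}=c_{32}=10$, the $3$-cycle beats the identity while every transposition loses. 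Such configurations occur with positive probability under Gaussian noise, so the union of your four-term events does not contain the error event, and the union bound in your second step controls the wrong probability.

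The paper's proof avoids this by a \emph{per-index} (not per-transposition) reduction: writing the error event as $\bigcup_{\pi\neq\id}\O_\pi$ with $\O_\pi=\{\sum_{i:\pi(i)\neq i}\log\frac{\|X_i-\Xdiese_i\|^2}{\|X_{\pi(i)}-\Xdiese_i\|^2}\ge 0\}$, cancelling the fixed points, and using that a nonnegative sum has at least one nonnegative summand, which yields the inclusion of the error event in the $n(n-1)$ single-term events $\{\|X_i-\Xdiese_i\|^2/(2\s_i^2)\ge\|X_j-\Xdiese_i\|^2/(\s_i^2+\s_j^2)\}$ — valid for arbitrary $\pi$, cycles included. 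For LSL there is a further ingredient you are missing: since the log-objective carries no normalization, one must insert the heteroscedastic weights into the per-index comparison, and this is done by the concavity of the logarithm summed along the permutation, $\sum_{i:\pi(i)\neq i}\log\frac{2\s_i^2}{\s_i^2+\s_{\pi(i)}^2}\le 0$. Your remark that ``the unknown scale cancels in the difference of logarithms'' is false index-by-index (the relevant scales are $2\s_i^2$ versus $\s_i^2+\s_{\pi(i)}^2$); the cancellation only holds in aggregate via this concavity/AM--GM step. Once the reduction is repaired in this way, your concentration-plus-union-bound plan goes through essentially as in the paper (the bounds there are stated for $\z_1$, the normalized signal-noise inner products, and $\z_2$, the normalized $\chi^2$ deviations, via the Laurent--Massart lemma).
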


This result tells us that the performance of the LSNS and LSL estimators, measured in terms of the order of magnitude of the
separation distance, is not affected by the heteroscedasticity of the noise levels. Two natural questions arise: 1) is this performance the best possible
over all the estimators of $\pi^*$ and 2) is the performance of the LSS and the greedy estimator as good as that of
the LSNS  and LSL?

To answer the first question, we should start by adapting the notion of minimax separation distance to the case of unknown
noise levels. Indeed, the definition (\ref{mmx}) given in previous sections involves a minimum over all possible estimators
$\hat\pi$ which are allowed to depend on $\bsigma$. On the one hand, considering $\bsigma$ as known limits considerably the
scope of applications of the methods. On the other hand, from a theoretical point of view, knowing $\bsigma$ may lead to
a substantially better minimax rate and even to a separation equal to zero, which correspond to an estimator that has no real practical
interest.  Indeed, under condition~(\ref{sigma_h}), it is possible to estimate the permutation $\pi^*$ by only considering the
noise levels, without any assumption on the distance between the features. For instance, we can define an estimator $\hat\pi$ as follows.
We set $\hat\pi(1) = \arg\min_{i=1,\ldots,n} \big| \frac1{2d}{\|X_i-\Xdiese_1\|^2} - \s_i^2 \big|$
and recursively, for every $j\in\{ 2,\ldots,n\}$,
\begin{equation*}
\hat\pi(j) = \arg\min_{i\not\in\{\hat\pi(1),\ldots,\hat\pi(j-1)\}} \Big| \frac1{2d}{\|X_i-\Xdiese_j\|^2} - \s_i^2 \Big|.
\end{equation*}
This leads to an accurate estimator of $\pi^*$---for vectors $\{\theta_j\}$ that are very close---as soon as the noise
levels are different enough from each other.
In particular, the estimated permutation $\hat\pi$ coincides with the true permutation $\pi^*$ on the event
\begin{equation}\label{eq:14}
\forall i\in\{1,\ldots,n\},\qquad \Big|\frac1{2d}{\|X_{\pi^*(i)}-\Xdiese_i\|^2} - \s_{\pi^*(i)}^2 \Big| < \min_{j\neq \pi^*(i)} \Big|  \frac{\|X_j-\Xdiese_i\|^2}{2d} - \s_j^2\Big|.
\end{equation}
Using standard bounds on the tails of the $\chi^2$ distribution recalled in Lemma~\ref{concentration} of Section~\ref{section:theorems},
in the case when all the vectors $\theta_j$ are equal, one can check that the left-hand side in (\ref{eq:14}) is of the order
of $\s_{\pi^*(i)}^2\sqrt{(\log n)/d}$ while the right-hand side is at least of the order of
$\frac12\min_{j\not = \pi^*(i)} |\s_j^2-\s_{\pi^*(i)}^2|- C(\sigma_{\pi^*(i)}^2+\s_{j}^2)\sqrt{(\log n)/d}$. This implies that we can consistently identify the permutation when
\begin{equation*}
\forall (i,j)\in\{1,\ldots,n\}^2,
\quad i\not= j,\qquad \Big|\frac{\s_j^2}{\s_i^2}-1\Big| \ \gg \ \sqrt{\frac{\log n}{d}},
\end{equation*}
even if the separation distance is equal to zero. In order to discard such kind of estimators from the competition in the procedure of determining the minimax rates, we restrict our attention to the noise levels for which the values $|\frac{\s_j^2}{\s_i^2}-1|$ are not
larger than $C\sqrt{(\log n)/d}$ for $j\not=i$.

\begin{theorem}\label{lowerbound2}
Assume that $n\ge6$, $\bar\T_\k$ is the set of all $\btheta\in\RR^{n\times d}$ such that $\bar\k(\btheta,\bsigma)\ge \k$ and
\begin{equation*}
\frac{\max_i\s^2_i}{\min_i\s^2_i} -1 \le \frac14 \ \sqrt{\frac{\log n}{d}}.
\end{equation*}
Then there exist two constants $c,C>0$ such that
$\k<(1/8) \max\{ (\log n)^{1/2},  (cd\log n)^{1/4} \}$,
implies that
\begin{equation*}
\inf_{\hat\pi} \max_{\pi^*\in\Sn} \sup_{\btheta\in\bar\T_{\k}} \prob_{\btheta,\bsigma,\pi^*} (\hat\pi\neq\pi^*) > C,
\end{equation*}
where the infimum is taken over all permutation estimators.
\end{theorem}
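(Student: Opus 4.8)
The plan is to prove this lower bound along the lines of the proof of Theorem~\ref{lowerbound1}, reducing it to a Bayesian risk computation on a family of hard instances that splits into $\lfloor n/2\rfloor$ mutually independent two--feature sub-problems. Fix an arbitrary pairing of $\{1,\dots,n\}$ into blocks $B_k=\{2k-1,2k\}$, $k=1,\dots,\lfloor n/2\rfloor$ (freezing the leftover index when $n$ is odd). Define a prior on $\bar\T_\k\times\Sn$ by setting, in block $k$, $\t_{2k-1}=p_k$ and $\t_{2k}=p_k+D_k\,\omega_k$, where the base points $p_k$ are fixed and pairwise separated by much more than $\max_j D_j$, where $D_k=\k\,(\s_{2k-1}^2+\s_{2k}^2)^{1/2}$ (so that $\bar\k(\btheta,\bsigma)=\k$, the minimum being attained within the blocks), and where $\omega_k$ is drawn uniformly at random on the unit sphere of $\RR^d$, independently over $k$; finally take $\pi^*=\prod_k\tau_k$ where each $\tau_k$ is, independently, the transposition of $B_k$ or the identity, each with probability $\tfrac12$. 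All these instances lie in $\bar\T_\k\times\Sn$, so for any estimator $\hat\pi$ — including one allowed to depend on the fixed $\bsigma$ — the quantity $\max_{\pi^*}\sup_{\btheta\in\bar\T_\k}\prob_{\btheta,\bsigma,\pi^*}(\hat\pi\neq\pi^*)$ is at least the Bayes risk under this prior. Since data and prior factorize over the blocks, the posterior of $\tau_k$ depends only on the block-$k$ observations, and one obtains the clean product bound: this Bayes risk is at least $1-\prod_k(1-\rho_k)$, where $\rho_k\le\tfrac12$ is the Bayes error of the two-hypothesis problem ``$\tau_k=\mathrm{id}$ versus $\tau_k$ a transposition'' based on the block-$k$ data alone.

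It then suffices to show $\rho_k\ge c_0 n^{-\gamma}$ for absolute constants $c_0>0$, $\gamma<1$, whenever $\k<\tfrac18\max\{(\log n)^{1/2},(cd\log n)^{1/4}\}$ with $c$ small enough: indeed $1-\prod_k(1-\rho_k)\ge 1-(1-c_0 n^{-\gamma})^{\lfloor n/2\rfloor}$, which for $n\ge6$ is bounded below by an absolute constant $C>0$ once the free constants are fixed, and this $C$ is the constant in the statement. I would bound $\rho_k$ from below by $\tfrac14$ times the squared Hellinger affinity of the two block-$k$ data laws $\bar Q_0^{(k)},\bar Q_1^{(k)}$ (each a mixture over $\omega_k$), and estimate that affinity in two complementary ways. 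First, joint concavity of the geometric mean gives affinity $\ge\esp_{\omega_k}[\text{affinity at }\omega_k]$, and the affinity at a fixed direction is the elementary Gaussian quantity $\gtrsim e^{-c'\k^2}$ (times a variance-mismatch factor discussed below); this already yields $\rho_k\gtrsim n^{-\gamma}$ when $\k<\tfrac18(\log n)^{1/2}$, covering in particular the whole regime $d\lesssim\log n$. Secondly — and this is where the dimension enters and where the spherical randomization of $\omega_k$ is essential — a direct second-moment/$\chi^2$ computation of the two mixtures, together with the rotational invariance of the sphere and the $\chi^2$ tail bounds recalled in Lemma~\ref{concentration}, shows that the averaging over $\omega_k$ destroys all information except that carried by $\ell_2$-magnitudes of residuals, whose fluctuations are of order $\s^2\sqrt d$; this produces a correction factor $e^{-cD_k^4/(\s^4 d)}=e^{-c''\k^4/d}$, hence $\rho_k\gtrsim n^{-2c''c}$ as soon as $\k^4<c\,d\log n$, covering the regime $d\gtrsim\log n$ up to the choice of $c$.

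Heteroscedasticity enters only through the block structure: within $B_k$ the noise levels $\s_{2k-1},\s_{2k}$ generally differ, and $\tau_k$ also exchanges $\sdiese_{2k-1}\leftrightarrow\sdiese_{2k}$, so the two block laws differ in variance as well as in mean. The hypothesis $\max_i\s_i^2/\min_i\s_i^2-1\le\tfrac14((\log n)/d)^{1/2}$ is precisely calibrated so that this variance mismatch contributes to the relevant log-affinity exponent only a term of order $(\log n)^{1/2}$ worth of ``effective squared separation'' per unit $\s^2$ — that is, $\tfrac14\s^2(d\log n)^{1/2}$, of the same order as, and with strictly smaller constant than, the admissible mean-separation budget $\k^2\lesssim(d\log n)^{1/2}$ (and, in the first regime, only $d\eta_k^2\le\tfrac1{16}\log n$) — so the two affinity estimates above survive, with adjusted constants, in the heteroscedastic case. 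The main technical obstacle is exactly this second estimate: bounding the Hellinger affinity (equivalently the $\chi^2$ divergence) between the two \emph{spherical mixtures}, for which the naive ``affinity of a mixture exceeds the average of the affinities'' bound used in the first step is far too weak and gives nothing beyond the $(\log n)^{1/2}$ rate; one must instead carry out the second-moment computation, exhibit the $\exp(cD_k^4/(\s^4 d))$ factor arising from the spherical average, verify it stays $n^{o(1)}$ throughout the admissible range of $\k$, and simultaneously absorb the variance-mismatch terms allowed by the near-homoscedasticity assumption.
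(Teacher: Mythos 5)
Your route is genuinely different from the paper's. The paper proves this theorem via the Fano-type Lemma~\ref{Tsybakov}: in the regime $\k\le\frac18(\log n)^{1/2}$ it builds an explicit least favorable configuration (Lemma~\ref{construction_btheta}, with variances sorted and consecutive indices paired, and with compensating shifts $\eta_k$ to absorb the variance swap) and tests the identity against roughly $n/2$ transpositions; in the regime $\k\le\frac18(cd\log n)^{1/4}$ it randomizes $\btheta$ with the sign-hypercube prior $\mu$ on $\{\pm\e_1\}^d\times\cdots\times\{\pm\e_n\}^d$ and applies Lemma~\ref{Tsybakov} to the identity versus all $n(n-1)/2$ transpositions under the \emph{same} prior, the whole weight of the argument being carried by Lemma~\ref{kullback_leibler_divergence_heteroscedastic} (an explicit mixture likelihood-ratio computation, a product of $\cosh$ factors bounded through $u^2/2-u^4/12\le\log\cosh u\le u^2/2$), plus the correction $\mu(\mathcal E\setminus\bar\T_\k)\le\frac12 n(n-1)e^{-d/8}$. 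You instead use an Assouad-type design: independent two-feature blocks, a uniform spherical direction inside each block, independent random transpositions, and the product bound $1-\prod_k(1-\rho_k)$ for the Bayes risk. Your skeleton is correct (the prior sits in $\bar\T_\k\times\Sn$, the blockwise factorization of the posterior is legitimate, and per-block errors of order $n^{-a}$ with $a<1$ do force the global risk above an absolute constant, in fact toward $1$, which is slightly stronger than the paper's fixed constants), and you have correctly identified the key phenomenon: averaging over the unknown direction cancels the first-order $\k^2$ information, leaving a $\k^4/d$ exponent, while the hypothesis $\max_i\s_i^2/\min_i\s_i^2-1\le\frac14(\log n/d)^{1/2}$ budgets the $d(\gamma-1)^2$ and $\k^2(1-\gamma^{-1})$ terms that the variance swap creates — exactly the three terms appearing in the paper's Lemma~\ref{kullback_leibler_divergence_heteroscedastic}.

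The gap is that the one estimate carrying all the difficulty — a lower bound on the affinity between the two within-block \emph{spherical mixtures} in the heteroscedastic case — is announced, not proved, and your sketch of it has two unrepaired points. First, both block laws are mixtures (under either hypothesis the law depends on $\omega_k$ through $X_{2k}$ and the sharp-features), so the standard second-moment computation of $\chi^2$ against a \emph{fixed} product null does not apply as stated; you must either write out the mixture likelihood ratio explicitly — with a spherical prior this brings in spherical integrals rather than the elementary product of $\cosh$'s that the paper's $\pm\e$ prior was chosen to produce — or prove a genuine mixture-versus-mixture $\chi^2$ bound and then convert it into an affinity lower bound, e.g.\ via $\int\sqrt{pq}\ge(1+\chi^2)^{-1/2}$; the total-variation-via-$\chi^2$ route is useless here because $\chi^2$ is allowed to be polynomially large in $n$. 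Second, the constants must actually be tracked: you need the sum of the $\k^4/d$ term ($\le c\log n/4096$), the variance term $d(\gamma-1)^2\le\frac1{16}\log n$, and the cross term $\k^2(1-\gamma^{-1})\lesssim\sqrt{c}\,\log n$ to stay below a fixed fraction of $\log n$ after squaring the affinity, which is where the free constant $c$ (and hence $C$) gets fixed. These are precisely the computations performed in the paper's Lemma~\ref{kullback_leibler_divergence_heteroscedastic} and in the Case 1 Kullback--Leibler bound; until your analogue of that lemma is written and checked, the proposal is a credible plan rather than a proof.
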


It is clear that the constants $c$ and $C$ of the previous theorem are closely related. The inspection of the proof shows that, for instance,
if $c\le 1/20$ then  $C$ is larger than $17\%$.

Let us discuss now the second question raised earlier in this section and concerning the theoretical properties
of the greedy algorithm and the LSS under heteroscedasticity. In fact, the perceivable distances of separation
of these two procedures are significantly worse than those of the LSNS and the LSL especially for large
dimensions $d$.  We state the corresponding result for the greedy algorithm, a similar conclusion
being true for the LSS as well. The superiority of the LSNS and LSL is also confirmed by the numerical
simulations presented in Section~\ref{section:simulations} below. In the next theorem and in the sequel of the paper,
we denote by $\id$ the identity permutation defined by $\id(i) = i$ for all $i\in\{1,\ldots,n\}$.

\begin{theorem}\label{lowerbound3}
Assume that $d\ge 225\log 6$, $n=2$, $\s_1^2 =3$ and $\s_2^2=1$.
Then the condition  $\k < 0.1(2d)^{1/2}$ implies that
$$
\sup_{\btheta\in\bar\T_\k}\prob_{\btheta,\bsigma,\id}(\pi^{\textrm{gr}}\neq\id) \ge 1/2.
$$
\end{theorem}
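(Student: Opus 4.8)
The plan is to exhibit a single configuration $\btheta$ in $\bar\T_\k$ for which the greedy algorithm fails with probability at least $1/2$. Since $n=2$, the only two permutations are $\id$ and the transposition $\tau$, so the greedy estimator coincides with $\id$ if and only if at its first step it assigns $X_1$ to $\Xdiese_1$, i.e.\ iff $\|X_1-\Xdiese_1\| \le \|X_2-\Xdiese_1\|$; otherwise $\pi^{\textrm{gr}}(1)=2$ and necessarily $\pi^{\textrm{gr}}\neq\id$. Thus I only need a lower bound on $\prob_{\btheta,\bsigma,\id}\big(\|X_2-\Xdiese_1\| < \|X_1-\Xdiese_1\|\big)$. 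Because $\s_1^2=3$ is the larger noise level and is attached to the first feature, the idea is that $X_1$ is contaminated by a much larger noise than $X_2$, so with decent probability the heavily-perturbed $X_1$ lands further from the (relatively clean) $\Xdiese_1$ than $X_2$ does, despite $X_2$ being a match for $\Xdiese_2$ and not $\Xdiese_1$.

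Concretely I would take the two true features as close as the constraint permits: put $\tdiese_1=\t_1$ and $\t_2$ at distance exactly $\k$ from $\t_1$ along the first coordinate (say $\t_1=0$, $\t_2=\k e_1$), so that $\tdiese_1\equiv\t_{\id(1)}$ and $\bar\k(\btheta,\bsigma)=\|\t_1-\t_2\|/(\s_1^2+\s_2^2)^{1/2}=\k/2 \ge$ the stated bound—wait, we want $\btheta\in\bar\T_\k$, so in fact I should place $\t_2$ at distance $\k(\s_1^2+\s_2^2)^{1/2}=2\k$ from $\t_1$; the hypothesis $\k<0.1(2d)^{1/2}$ then gives $\|\t_1-\t_2\| = 2\k < 0.2(2d)^{1/2}$. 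Now expand the two squared distances: $\|X_1-\Xdiese_1\|^2 = \|\s_1\xi_1-\sdiese_1\xidiese_1\|^2$ and $\|X_2-\Xdiese_1\|^2=\|(\t_2-\t_1)+\s_2\xi_2-\sdiese_1\xidiese_1\|^2$. The vector $\s_1\xi_1-\sdiese_1\xidiese_1$ is Gaussian with covariance $(\s_1^2+\sdiese_1^2)I=(3+\sdiese_1^2)I$; since under~(\ref{sigma_h}) we have $\sdiese_1=\s_{\pi^*(1)}=\s_1$, i.e.\ $\sdiese_1^2=3$, its squared norm concentrates around $6d$. Similarly $\s_2\xi_2-\sdiese_1\xidiese_1$ has covariance $(\s_2^2+\s_1^2)I=4I$ and squared norm around $4d$, plus the deterministic shift $\|\t_2-\t_1\|^2=4\k^2 < 0.04\cdot 2d = 0.08d$, plus a cross term that is a centered Gaussian. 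The event $\{\|X_2-\Xdiese_1\|<\|X_1-\Xdiese_1\|\}$ therefore happens as soon as the first $\chi^2$-like quantity exceeds roughly $6d$ by a constant fraction of $d$ while the second stays below roughly $4d+0.08d$ by a similar margin—an event of probability bounded away from $0$, in fact at least $1/2$ once $d$ is large enough, which is exactly what $d\ge 225\log 6$ is there to guarantee.

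The technical heart is making the ``at least $1/2$'' claim precise via the $\chi^2$ concentration inequalities of Lemma~\ref{concentration}. I would write $\|X_1-\Xdiese_1\|^2 = 6\,\|Z_1\|^2$ and $\|X_2-\Xdiese_1\|^2 = \|2Z_2 + v\|^2$ with $Z_1,Z_2$ independent standard Gaussians in $\RR^d$ and $\|v\|^2=4\k^2$; then $\|2Z_2+v\|^2 = 4\|Z_2\|^2 + 4\langle Z_2,v\rangle + \|v\|^2$, where $4\langle Z_2,v\rangle \sim \mathcal N(0, 64\k^2)$. Using Lemma~\ref{concentration} I would show that with probability $\ge 1-\beta$ (for a small explicit $\beta$) simultaneously $6\|Z_1\|^2 \ge 6d - c_1\sqrt{d\log(1/\beta)}$ is large, $4\|Z_2\|^2 \le 4d + c_2\sqrt{d\log(1/\beta)}$, and $|4\langle Z_2,v\rangle| \le c_3\k\sqrt{\log(1/\beta)}$; the gap between the target values $6d$ and $4d+4\k^2$ is a constant times $d$ (since $4\k^2 < 0.08 d$), which dominates all the $\sqrt{d\log(1/\beta)}$ fluctuation terms once $d\gtrsim \log(1/\beta)$. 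Calibrating $\beta$ so that the failure probability of this conjunction is at most $1/2$—this is where the explicit constants $0.1$, $3$, $1$ and $225\log 6$ must be checked to line up—gives $\prob_{\btheta,\bsigma,\id}(\pi^{\textrm{gr}}\neq\id)\ge \prob(\|X_2-\Xdiese_1\|<\|X_1-\Xdiese_1\|)\ge 1/2$. The main obstacle is purely bookkeeping: tracking the numerical constants through Lemma~\ref{concentration} tightly enough that the crude bound ``$4\k^2<0.08d$ beats the $\chi^2$ fluctuations'' actually closes with the stated threshold $d\ge 225\log 6$ and yields exactly $1/2$ rather than some smaller constant; no conceptually new idea is needed beyond the observation that greedy commits irrevocably on its first, noisiest comparison.
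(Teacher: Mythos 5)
Your proposal follows essentially the same route as the paper's proof: the same worst-case configuration with $\|\t_1-\t_2\|=2\k$, the same reduction to the event $\|X_2-\Xdiese_1\|<\|X_1-\Xdiese_1\|$ at the first greedy step, the same decomposition into $6\eta_1$ versus $4\eta_2+8\k\eta_3+4\k^2$ with $\eta_1,\eta_2\sim\chi^2_d$ and a Gaussian cross term, and the same use of Lemma~\ref{concentration} with a union bound over three events. The ``bookkeeping'' you deferred does close: taking $x=\sqrt{\log 6}$ so each event fails with probability $1/6$, the hypotheses $d\ge 225\log 6$ (i.e.\ $x\le\sqrt{d}/15$) and $\k<0.1(2d)^{1/2}$ give $2d-20\sqrt{d}\,x-4(\k+\sqrt{2}x)^2>0$, yielding the bound $1-3e^{-x^2}=1/2$ exactly as in the paper.
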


This theorem shows that if $d$ is large, the necessary condition for $\pi^{\textrm{gr}}$ to be
consistent is much stronger than the one obtained for $\pi^{\textrm{LSL}}$ in Theorem~\ref{upperbound2}. Indeed,
for the consistency of $\pi^{\textrm{gr}}$, $\k$ needs to be at least of the order of $d^{1/2}$, whereas
$d^{1/4}$ is sufficient for the consistency of $\pi^{\textrm{LSL}}$. Hence, the maximum likelihood estimators LSNS and LSL
that take into account noise heteroscedasticity are, as expected, more interesting than the simple greedy estimator\footnote{It 
should be noted that the conditions of Theorem~\ref{lowerbound3} are not compatible with those of Theorem~\ref{lowerbound2}. 
Hence, strictly speaking, the former does not imply that the greedy estimator is not minimax under the conditions of the latter.}.


\section{Computational Aspects}\label{section:computationalaspects}


At first sight, the computation of the estimators (\ref{LSS})-(\ref{LSL}) requires to
perform an exhaustive search over the set of all possible permutations, the number of which, $n!$, is
prohibitively large. This is in practice impossible to do on a standard PC as soon as $n \geq 20$.
In this section, we show how to compute these (maximum likelihood) estimators in polynomial time using, for instance,
algorithms of linear programming\footnote{The idea of reducing the problem of permutation estimation to a linear program
has been already used in the literature, however, without sufficient theoretical justification: see, for instance,
\citet{Jebara2003}.}.

To explain the argument, let us consider the LSS estimator
\begin{equation*}
\pi^{{\textrm{LSS}}} = \arg\min_{\pi\in\Sn} \sum_{i=1}^n \|X_{\pi(i)}-\Xdiese_i\|^2.
\end{equation*}
For every permutation $\pi$, we denote by $P^\pi$ the $n\times n$ permutation matrix with coefficients
$P^\pi_{ij} = \fcar_{\{ j=\pi(i) \}}$. Then we can give the equivalent formulation
\begin{equation}\label{discrete}
\pi^{\textrm{LSS}} = \arg\min_{\pi \in\Sn} \trace \big(M P^\pi\big),
\end{equation}
where $M$ is the matrix with coefficient $\|X_i-\Xdiese_j\|^2$ at the $i^{th}$ row and $j^{th}$ column. The cornerstone of our next argument is the Birkhoff-von Neumann theorem stated below, which can be found for example in \citep{BudishCheKojimaMilgrom2009}.

\begin{theorem}[Birkhoff-von Neumann Theorem]\label{birkhoff-von-neumann}
Assume that $\mathcal{P}$ is the set of all doubly stochastic matrices of size $n$, \ie the matrices whose
entries are nonnegative and sum up to $1$ in every row and every column. Then every matrix in $\mathcal{P}$
is a convex combination of matrices $\{P^\pi : \pi\in\Sn\}$. Furthermore, permutation matrices are the vertices
of the simplex $\mathcal P$.
\end{theorem}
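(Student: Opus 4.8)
The plan is to establish the two assertions of the statement separately: first, that every doubly stochastic matrix is a convex combination of the matrices $P^\pi$, $\pi\in\Sn$; and second, that these permutation matrices are exactly the vertices of $\mathcal{P}$. That each $P^\pi$ itself belongs to $\mathcal{P}$ is immediate, since a permutation matrix has exactly one nonzero entry, equal to $1$, in each row and in each column.

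For the first assertion I would induct on the number of strictly positive entries of a doubly stochastic matrix $A$. The crux of the argument — and the only genuinely nontrivial point — is to show that the support $\{(i,j):A_{ij}>0\}$ contains the graph $\{(i,\sigma(i)):1\le i\le n\}$ of some $\sigma\in\Sn$; equivalently, that the bipartite graph on rows and columns with an edge $i\sim j$ whenever $A_{ij}>0$ has a perfect matching. I would verify Hall's condition: for a set $S$ of rows, the entries of $A$ lying in those rows have total mass $|S|$, since each row sums to $1$, and are supported on the columns forming the neighborhood $\Gamma(S)$; since each column of $A$ sums to $1$, the total mass carried by the columns in $\Gamma(S)$ is at most $|\Gamma(S)|$, whence $|\Gamma(S)|\ge|S|$. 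Hall's marriage theorem then produces the desired $\sigma$. Setting $\lambda\triangleq\min_{i}A_{i\sigma(i)}>0$, there are two cases. If $\lambda=1$, the row sums of $A$ force $A=P^\sigma$. If $0<\lambda<1$, then $B\triangleq(1-\lambda)^{-1}\big(A-\lambda P^\sigma\big)$ is nonnegative by the choice of $\lambda$, is again doubly stochastic (each row and column of $A-\lambda P^\sigma$ sums to $1-\lambda$), and has strictly fewer positive entries than $A$, because the entry of $A-\lambda P^\sigma$ at the minimizing position vanishes while no positive entry of $A$ is cancelled at positions outside the graph of $\sigma$. By the induction hypothesis $B=\sum_k\mu_kP^{\sigma_k}$ is a convex combination of permutation matrices, hence so is $A=\lambda P^\sigma+(1-\lambda)B$. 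The induction terminates because a doubly stochastic matrix has at least $n$ positive entries (its support contains a perfect matching, by the above), and in that minimal case the row sums force $A$ to be a permutation matrix outright.

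For the second assertion, I would first show that each $P^\sigma$ is a vertex of $\mathcal{P}$: suppose $P^\sigma=tA+(1-t)B$ with $A,B\in\mathcal{P}$ and $t\in(0,1)$. At every position $(i,j)$ with $j\ne\sigma(i)$ we have $P^\sigma_{ij}=0$, so nonnegativity of $A_{ij},B_{ij}$ together with $t\in(0,1)$ forces $A_{ij}=B_{ij}=0$; the requirement that row $i$ of $A$, and likewise of $B$, sum to $1$ then forces $A_{i\sigma(i)}=B_{i\sigma(i)}=1$, so $A=B=P^\sigma$. Conversely, any vertex $v$ of $\mathcal{P}$ can, by the first assertion, be written $v=\sum_k\mu_kP^{\sigma_k}$ with $\mu_k\ge0$ and $\sum_k\mu_k=1$; as a vertex is not a nontrivial convex combination of other points of $\mathcal{P}$, only one $\mu_k$ is nonzero and $v$ equals that $P^{\sigma_k}$. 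Hence the vertex set of $\mathcal{P}$ is exactly $\{P^\pi:\pi\in\Sn\}$.

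The single real obstacle is the existence of a perfect matching in the support of $A$; everything else is bookkeeping. I would resolve it via Hall's theorem, which is essentially the classical Birkhoff algorithm. An alternative, purely polyhedral route is available — $\mathcal{P}$ is a bounded polyhedron defined by the totally unimodular constraint system of the row- and column-sum equalities together with the nonnegativity inequalities, so its vertices have integer coordinates, hence are $0$–$1$ doubly stochastic matrices, hence permutation matrices — but this merely relocates the combinatorial content into the verification of total unimodularity, so I would prefer the self-contained matching argument.
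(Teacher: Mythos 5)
Your proof is correct, but note that the paper does not prove this statement at all: it is the classical Birkhoff--von Neumann theorem, which the authors simply cite (to Budish et al.) and then use as a black box to convert the combinatorial problem (\ref{discrete}) into the linear program (\ref{cont}). So there is no paper proof to compare against; what you have written is the standard textbook argument, and it is sound. The Hall-condition verification is exactly right (mass $|S|$ from the rows in $S$ is carried by the columns of $\Gamma(S)$, which can absorb at most $|\Gamma(S)|$), the induction on the number of positive entries is well-founded since subtracting $\lambda P^\sigma$ kills the minimizing entry without creating new positive ones or touching entries off the graph of $\sigma$, and the extreme-point argument for the converse direction is complete once one observes (as you implicitly do) that distinct permutations give distinct matrices, so a vertex expressed as a convex combination of permutation matrices must coincide with a single one of them. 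The only cosmetic remark is that $\mathcal P$ is the Birkhoff polytope rather than a simplex (the paper's wording, not yours), and that your alternative total-unimodularity route would indeed be equally valid; your preference for the self-contained matching argument is reasonable.
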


In view of this result, the combinatorial optimization problem (\ref{discrete}) is equivalent to
the following problem of continuous optimization:
\begin{equation}\label{cont}
P^{\textrm{LSS}} = \arg\min_{P \in\mathcal P} \trace \big(M P\big),
\end{equation}
in the sense that $\pi$ is a solution to (\ref{discrete}) if and only if $P^\pi$ is a solution to (\ref{cont}).
To prove this claim, let us remark that for every $P\in\mathcal{P}$, there exist coefficients $\a_1,\ldots,\a_{n!}\in[0,1]$ such that
$P = \sum_{i=1}^{n!} \a_i P^{\pi_i}$ and $\sum_{i=1}^{n!} \a_i = 1$.
Therefore, we have $\trace\big(MP\big) = \sum_{i=1}^{n!} \a_i \trace\big(MP^{\pi_i}\big) \ge \min_{\pi\in\Sn} \trace\big(MP^{\pi}\big)$ and $\trace\big(MP^{\textrm{LSS}}\big)\ge \trace\big(MP^{\pi^{\textrm{LSS}}}\big).$

The great advantage of (\ref{cont}) is that it concerns the minimization of a linear function under linear constraints and, therefore,
is a problem of linear programming that can be efficiently solved even for large values of $n$. The same arguments apply to the estimators $\pi^{\textrm{LSNS}}$ and $\pi^{\textrm{LSL}}$, only the matrix $M$ needs to be changed.

There is a second way to compute the estimators LSS, LSNS and LSL efficiently. Indeed, the computation of the aforementioned maximum likelihood estimators is a particular case of the assignment problem, which consists in finding a minimum weight matching in a weighted bipartite graph, where the matrix of the costs is the matrix $M$ from above. This means that the cost of assigning the $i^{\text{th}}$ feature of the first image to the $j^{\text{th}}$ feature of the second image is either
\begin{itemize}\setlength{\itemsep}{0pt}
\item the squared distance  ${\|X_i-\Xdiese_j\|^2}$,
\item or the normalized squared distance $\|X_i-\Xdiese_j\|^2/(\s_i^2+(\sdiese_j)^2)$,
\item or the logarithm of the squared distance $\log \|X_i-\Xdiese_j\|^2$.
\end{itemize}
The so-called Hungarian algorithm presented in \citet*{Kuhn1955} solves the assignment problem in time $O(n^3)$.

\section{Extensions}\label{section:extensions}

In this section, we briefly discuss possible extensions of the foregoing results to
other distances, more general matching criteria and to the estimation of an arrangement.

\subsection{Minimax Rates for the Hamming Distance}
In the previous sections, the minimax rates were obtained for the error of estimation measured by the risk $\prob_{\btheta,\bsigma\pi^*}(\hat\pi\neq\pi^*)
=\esp_{\btheta,\bsigma,\pi^*}[\delta_{\text{0\,-1}}(\hat\pi,\pi^*)]$, which may be considered as too restrictive. Indeed, one could find acceptable an estimate
having a small number of mismatches, if it makes it possible to significantly reduce the perceivable distance of separation. These considerations lead
to investigating the behavior of the estimators in terms of the Hamming loss, \textit{i.e.}, to studying the risk
\begin{equation*}
\esp_{\btheta,\bsigma,\pi^*}[\delta_{H}(\hat\pi,\pi^*)]=\esp_{\btheta,\bsigma,\pi^*}
\Big[\frac1n \sum\limits_{i=1}^n \fcar_{\{\hat\pi(i)\neq\pi^*(i)\}}\Big]
\end{equation*}
corresponding to the expected average number of mismatched features. Another advantage of studying the Hamming loss instead of 
the 0\,-1 loss is that
the former sharpens the difference between the performances of various estimators. Note, however, that thanks to the inequality
$\delta_{H}(\hat\pi,\pi^*)\le \delta_{\text{0\,-1}}(\hat\pi,\pi^*)$, all the upper bounds established for the minimax rate of separation
under the 0-1 loss directly carry over to the case of the Hamming loss.
This translates into the following theorem.

\begin{theorem}\label{upperbound4}
Let $\a\in(0,1)$ and condition (\ref{sigma_h}) be fulfilled. If $\hat\pi$ is either $\pi^{\textrm{LSNS}}$
or $\pi^{\textrm{LSL}}$,
then $\bar\k_\a(\hat\pi) \leq 4\max\big\{\big(2\log \frac{8n^2}{\a}\big)^{1/2}, \big(d\log\frac{4n^2}{\a}\big)^{1/4} \big\}$. That is,
if
$$\kappa=4\max\Big\{\Big(2\log \frac{8n^2}{\a}\Big)^{1/2}, \Big(d\log\frac{4n^2}{\a}\Big)^{1/4} \Big\}
$$
and $\bar\T_\k$ is the set of all $\btheta\in\RR^{n\times d}$ such that $\bar\k(\btheta,\sigma)\ge \k$, then
\begin{equation*}
\max_{\pi^*\in\Sn}\sup_{\btheta\in\bar\T_\k}\esp_{\btheta,\bsigma,\pi^*}[\delta_{H}(\hat\pi,\pi^*)] \le \a.
\end{equation*}
\end{theorem}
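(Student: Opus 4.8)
The plan is to derive Theorem~\ref{upperbound4} as a direct consequence of Theorem~\ref{upperbound2} together with the pointwise inequality $\delta_{H}(\hat\pi,\pi^*)\le \delta_{\text{0\,-1}}(\hat\pi,\pi^*)$ already noted in the text. The key observation is that the normalized Hamming distance counts the \emph{fraction} of mismatched coordinates, so it can never exceed $1$, and in particular it equals $0$ whenever $\hat\pi=\pi^*$ and is at most $1$ whenever $\hat\pi\neq\pi^*$; hence $\delta_{H}(\hat\pi,\pi^*)\le \fcar_{\{\hat\pi\neq\pi^*\}}=\delta_{\text{0\,-1}}(\hat\pi,\pi^*)$ for every realization.

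First I would fix $\a\in(0,1)$ and assume condition~(\ref{sigma_h}), and set
$$
\kappa=4\max\Big\{\Big(2\log \tfrac{8n^2}{\a}\Big)^{1/2}, \Big(d\log\tfrac{4n^2}{\a}\Big)^{1/4} \Big\},
$$
so that by Theorem~\ref{upperbound2} we have $\bar\k_\a(\hat\pi)\le\kappa$ for $\hat\pi$ equal to either $\pi^{\textrm{LSNS}}$ (when the noise levels are known) or $\pi^{\textrm{LSL}}$ (when they are not). By the definition of the perceivable separation distance, this means that for every $\btheta$ with $\bar\k(\btheta,\bsigma)\ge\kappa$ and every $\pi^*\in\Sn$ one has $\prob_{\btheta,\bsigma,\pi^*}(\hat\pi\neq\pi^*)\le\a$.

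Next I would take expectations of the pointwise inequality under $\prob_{\btheta,\bsigma,\pi^*}$: since $\delta_{H}(\hat\pi,\pi^*)\le \delta_{\text{0\,-1}}(\hat\pi,\pi^*)=\fcar_{\{\hat\pi\neq\pi^*\}}$, monotonicity of the expectation yields
$$
\esp_{\btheta,\bsigma,\pi^*}[\delta_{H}(\hat\pi,\pi^*)]\le \esp_{\btheta,\bsigma,\pi^*}[\fcar_{\{\hat\pi\neq\pi^*\}}]=\prob_{\btheta,\bsigma,\pi^*}(\hat\pi\neq\pi^*)\le\a.
$$
Taking the supremum over $\btheta\in\bar\T_\k$ (the set of $\btheta$ with $\bar\k(\btheta,\bsigma)\ge\kappa$) and the maximum over $\pi^*\in\Sn$ preserves the bound, giving $\max_{\pi^*\in\Sn}\sup_{\btheta\in\bar\T_\k}\esp_{\btheta,\bsigma,\pi^*}[\delta_{H}(\hat\pi,\pi^*)]\le\a$, which is exactly the claimed statement; the equivalent reformulation in terms of $\bar\k_\a(\hat\pi)$ then follows from the (Hamming-distance) definition of the perceivable separation distance.

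There is essentially no obstacle here: the proof is a one-line domination argument, and the only thing to be careful about is bookkeeping—namely checking that the value of $\kappa$ quoted in Theorem~\ref{upperbound4} is identical to the one in Theorem~\ref{upperbound2} (it is), and that the set $\bar\T_\k$ and the condition~(\ref{sigma_h}) appearing in the two statements coincide. The substantive content is entirely contained in Theorem~\ref{upperbound2}; this theorem merely records that the same upper bound transfers verbatim to the weaker Hamming loss, which is why the text introduces it as an immediate corollary rather than a result requiring new ideas.
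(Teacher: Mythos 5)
Your proposal is correct and is exactly the paper's route: the paper gives no separate proof of Theorem~\ref{upperbound4}, presenting it as an immediate corollary of Theorem~\ref{upperbound2} via the pointwise domination $\delta_{H}(\hat\pi,\pi^*)\le \delta_{\text{0\,-1}}(\hat\pi,\pi^*)$, which is precisely your argument. Your bookkeeping (same $\kappa$, same $\bar\T_\k$, same condition~(\ref{sigma_h})) is accurate, so nothing is missing.
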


\begin{table}
\begin{center}
\begin{tabular}{r|rrrrrrrrr}
\toprule
$n=$ & 4 & 5 & 6 & 7 & 8 & 9 & 10 & 11 & 12\\
$M_n \ge $ & 19 & 57 & 179 & 594 & 1939 & 3441 & 11680 & 39520 & 86575
\\
$\frac{\log M_n}{n\log n}\ge$  & 0.53 & 0.50 & 0.48 & 0.47 & 0.455 & 0.412 & 0.407 & 0.401 & 0.381
\\
$\frac{\log M_n}{n\log n}\le$  & 0.53 & 0.50 & 0.48 & 0.47 & 0.455 & 0.445 & 0.436 & 0.427 & 0.420
\\
\bottomrule
\end{tabular}
\caption{The values of $M_n=\mathcal M(1/4, B_{2,n}(2),\delta_H)$ for $n\in\{4,\ldots,12\}$.
The lower bound is just the cardinality of one $\epsilon$-packing, not necessarily the largest one.
The upper bound is merely the cardinality of the $\ell_2$-ball.
\label{tab:1}}
\end{center}
\end{table}
While this upper bound is an immediate consequence of Theorem~\ref{upperbound2}, getting lower bounds for the Hamming loss
appears to be more difficult. To state the corresponding result, let is consider the case of homoscedastic noise and introduce some
notation. We denote by $\delta_2(\cdot,\cdot)$ the normalized $\ell^2$-distance on the space of permutations $\Sn$:
$\delta_2(\pi,\pi')^2=\frac1n\sum_{k=1}^n\big(\pi(k)-\pi'(k)\big)^2$. Let $B_{2,n}(R)$ be the ball of $(\Sn,\delta_2)$
with radius $R$ centered at $\id$. As usual, we denote by $\mathcal M(\e, B_{2,n}(R),\delta_H)$ the $\e$-packing number
of the $\ell_2$-ball $B_{2,n}(R)$ in the metric $\delta_H$. This means that $\mathcal M(\e, B_{2,n}(R),\delta_H)$ is the
largest integer $M$  such that there exist permutations $\pi_1,\ldots,\pi_M\in B_{2,n}(R)$ satisfying $\delta_H(\pi_i,\pi_j)\ge \e$
for every $i\not=j$. One can easily check that replacing $B_{2,n}(R)$ by any other ball of radius $R$ leaves the packing number
$\mathcal M(\e, B_{2,n}(R),\delta_H)$ unchanged. We set $M_n=\mathcal M(1/4, B_{2,n}(2),\delta_H)$.

\begin{theorem}\label{lowerbound4}
Let $\sigma_k=\sigma$ for all $k\in\{1,\ldots,n\}$ and $\bar\T_\k$ is the set of all $\btheta\in\RR^{n\times d}$ such that $\bar\k(\btheta,\bsigma)\ge \k$. Furthermore, assume that one of the following two conditions is fulfilled:
\begin{itemize}
\item $n\ge 3$ and $\k = (1/4)\big(\frac{\log M_n}{n}\big)^{1/2}$,
\item $n\ge 26$, $d\ge 24\log n$ and $\k\le (1/8)(d\log n)^{1/4}$.
\end{itemize}
Then
$\inf_{\hat\pi} \max_{\pi^*\in\Sn} \sup_{\btheta\in\bar\T_\k} \esp_{\btheta,\bsigma,\pi^*} [\delta_H(\hat\pi,\pi^*)] >  2.15\%$.
\end{theorem}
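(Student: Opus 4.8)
The plan is to establish the lower bound on the minimax Hamming risk via a reduction to a multiple-hypothesis testing problem, using Fano-type or Assouad-type arguments adapted to the Hamming metric. The two cases in the statement correspond to two different constructions of the hard family of permutations: in the small-dimension regime, the family is an $\e$-packing of an $\ell_2$-ball of $\Sn$ (which is why $M_n$ enters), whereas in the moderate-dimension regime one can afford a more explicit combinatorial family (blocks of transpositions) whose size is exponential in $n$ and for which the $d^{1/4}$ scaling appears through the $\chi^2$-type concentration of Lemma~\ref{concentration}.

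\textbf{Case 1 (small $d$).} First I would fix a configuration of the nuisance parameter $\btheta$ in which the features $\t_1,\dots,\t_n$ lie on a line (or a low-dimensional grid) with consecutive spacing exactly $\k$ relative to the noise, so that $\bar\k(\btheta,\bsigma)\ge\k$ is tight and moving feature $i$ to the position intended for feature $j$ costs, in squared-distance terms, something proportional to $\k^2|i-j|^2$. The point of choosing an $\e$-packing $\pi_1,\dots,\pi_{M_n}$ of $B_{2,n}(2)$ in the $\delta_H$ metric is that (a) all the $\pi_m$ are close to $\id$ in $\delta_2$, so the KL divergence between $\prob_{\btheta,\bsigma,\pi_m}$ and $\prob_{\btheta,\bsigma,\pi_{m'}}$ is controlled — here one uses that the squared $\ell_2$-distance between the two permutations times $\k^2/\s^2$ bounds the relative entropy — while (b) any two of them are $\delta_H$-separated by $\e=1/4$, so an estimator cannot be $\delta_H$-close to more than one of them. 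Choosing $\k=(1/4)(\log M_n/n)^{1/2}$ makes the pairwise KL divergences of order $\k^2\cdot 4\cdot n/\s^2\lesssim\log M_n$ (with the right small constant), and then the classical Fano inequality for the Hamming loss (e.g.\ the version in \cite{Ingster}, or a direct Birgé-type argument) yields that the average Hamming risk is bounded below by an absolute constant, here $2.15\%$.

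\textbf{Case 2 (moderate $d$).} Here I would instead build the hard family from $\lfloor n/2\rfloor$ independent transpositions: partition $\{1,\dots,n\}$ into pairs, put $\t_{2k-1}$ and $\t_{2k}$ at distance $\k(\s_{2k-1}^2+\s_{2k}^2)^{1/2}$ apart, and consider the $2^{\lfloor n/2\rfloor}$ permutations obtained by independently choosing, within each pair, either the identity or the transposition. This is an Assouad cube: distinguishing the $k$-th bit is exactly a two-point test between $\prob$-laws that differ only in the assignment of the $k$-th pair, and by Lemma~\ref{concentration} (the $\chi^2$-tail bounds) the total-variation affinity of these two laws stays bounded away from $0$ as long as $\k\le c(d\log n)^{1/4}$ — the $d^{1/4}$ entering because the relevant statistic is a difference of two $\chi^2_d$ variables whose fluctuations are of order $\sqrt d$, so the separation in the natural parameter is $\k^2$ and one needs $\k^4\lesssim d$ for non-detectability, up to the $\log n$ that pays for a union bound over the $n/2$ coordinates. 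Assouad's lemma then lower-bounds the expected Hamming distance by a constant fraction of the number of undecidable bits, i.e.\ by an absolute constant, and tracking the constants (with $n\ge 26$, $d\ge 24\log n$) gives the claimed $2.15\%$.

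\textbf{The main obstacle} I anticipate is the bookkeeping of constants so that \emph{the same} numerical lower bound $2.15\%$ comes out of both constructions — in particular, in Case~1 the constant depends on how tightly one can bound the pairwise KL divergences over the packing (the worst pair may be at $\delta_2$-distance as large as $4$, not typical), and one must verify the Fano/Birgé inequality is applied with the packing's separation $\e=1/4$ correctly converted into a Hamming-risk lower bound; in Case~2 the delicate point is the concentration estimate for the difference of (possibly scaled) $\chi^2_d$ variables that underlies the two-point bound, where the heteroscedasticity relation (\ref{sigma_h}) is used to make the two hypotheses genuinely exchangeable. A secondary technical point is checking that $\bar\k(\btheta,\bsigma)\ge\k$ really holds for the chosen $\btheta$ (the minimum over \emph{all} pairs $i\ne j$, not just the designed-to-be-close pairs), which forces the non-adjacent features to be placed far enough apart — easily arranged in Case~2 by separating the pairs, and in Case~1 by spacing the line configuration so the global minimum is attained at consecutive indices.
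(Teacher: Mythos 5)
Your Case~1 is essentially the paper's own argument: the same line configuration $\theta_k=k\k\s(1,0,\dots,0)$, the same $\delta_H$-packing of $B_{2,n}(2)$, the same KL bound $K(\prob_{\btheta,\bsigma,\pi_j},\prob_{\btheta,\bsigma,\id})=\frac{n\k^2}{2}\delta_2(\pi_j,\id)^2\le 2n\k^2$, and a Fano/Tsybakov-type bound followed by Markov's inequality to pass from $\prob(\delta_H\ge s)$ to the expected Hamming loss. No objection there.

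Case~2, however, contains a genuine gap. You build an Assouad cube of transpositions over a \emph{fixed} configuration $\btheta$ with paired features at relative distance $\k$, and claim that the two laws within a pair (identity versus swap) have total-variation affinity bounded away from zero whenever $\k\le c(d\log n)^{1/4}$, the $d^{1/4}$ coming from the $\sqrt d$ fluctuations of $\chi^2_d$ statistics. This is false: with $\btheta$ fixed and known to the (worst-case-optimal) test, the likelihood ratio between $\prob_{\btheta,\bsigma,\id}$ and $\prob_{\btheta,\bsigma,(2k-1\;2k)}$ only involves the projections of the observations onto the direction $\t_{2k-1}-\t_{2k}$; the KL divergence per pair is of order $\k^2$, not $\k^4/d$, so the two hypotheses become perfectly distinguishable as soon as $\k\gg(\log n)^{1/2}$, far below the claimed $(d\log n)^{1/4}$ threshold. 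The $\chi^2$-fluctuation heuristic explains why the \emph{distance-comparison} estimators need $\k\gtrsim(d\log n)^{1/4}$ (that is the upper-bound side), but a minimax lower bound must defeat every estimator, including the likelihood-ratio test that ignores the noisy norm terms. The missing idea is to randomize the nuisance parameter: the paper draws $\btheta$ from the uniform prior $\mu$ on the sign hypercube $\{\pm\e\}^{n\times d}$ with $\e=\sqrt{2/d}\,\s\k$ and bounds the KL divergence between the resulting \emph{mixtures} $\prob_{\mu,\pi}$ and $\prob_{\mu,\id}$ (Lemma~\ref{kullback_leibler_divergence_heteroscedastic}), which is indeed of order $\k^4/d$ per transposed pair; it must then also control $\mu(\mathcal E\setminus\bar\T_\k)\le\frac12 n(n-1)e^{-d/8}$, which is where the hypothesis $d\ge 24\log n$ is really used (your fixed-$\btheta$ construction makes checking $\bar\k(\btheta,\bsigma)\ge\k$ trivial precisely because it gives away too much). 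A secondary, fixable difference is that the paper runs a Fano-type argument (Lemma~\ref{Tsybakov2}) over a large $\delta_H$-separated family of products of disjoint transpositions (Lemma~\ref{existence2}) rather than Assouad's lemma, and your appeal to (\ref{sigma_h}) is out of place since Theorem~\ref{lowerbound4} is stated in the homoscedastic case; but without the prior on $\btheta$ the $d^{1/4}$ part of the lower bound cannot be obtained by your route.
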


This result implies that in the regime of moderately large dimension, $d\ge 24\log n$, the minimax rate of the separation is the same as
the one under the 0-1 loss and it is achieved by the LSL estimator. The picture is less clear in the regime of small dimensions, $d=o(\log n)$.
If one proves that for some $c>0$, the inequality $\log M_n\ge cn\log n$ holds for every $n\ge 3$, then the lower bound of the last theorem
matches the upper bound of Theorem~\ref{upperbound4} up to constant factors and leads to the minimax rate of separation
$\max\{(\log n)^{1/2},(d\log n)^{1/4}\}$. Unfortunately, we were unable to find any result on the order of magnitude of $\log M_n$,
therefore,  we cannot claim that there is no gap between our lower bound and the upper one. However, we did a small experiment for
evaluating $M_n$ for small values of $n$. The result is reported in Table~\ref{tab:1}.

\subsection{More General Matching Criteria}\label{subsection5.2}

In the previous sections, we were considering two vectors $\t_i$ and $\tdiese_j$ as matching if $\t_i\equiv\tdiese_j$, and $\equiv$
was the usual equality. In this part, we show that our results can be extended to more general matching criteria, defined as follows.
Let $A$, $\Adiese$ be two known $p\times d$ matrices with some $p\in\mathbb N$ and $b$, $\bdiese$ be two known vectors from $\RR^{d}$.
We write $\t\equiv_{A,b} \tdiese$, if
\begin{equation}\label{A,b}
A(\theta-b)=\Adiese(\tdiese -\bdiese).
\end{equation}
Note that the case of equality studied in previous sections is obtained for $A =\Adiese=\mathbf I_d$ and $b=\bdiese=0$, where $\mathbf I_d$ is
the identity matrix of size $d$. Let us first note that without loss of generality, by a simple transformation of the features, one can replace
(\ref{A,b}) by the simpler relation
\begin{equation}\label{A'}
\bar\theta=B \bartdiese,
\end{equation}
where $\bar\t\in\RR^{d_1}$ for $d_1 = \text{rank}(A)$, $\bartdiese\in\RR^{d_2}$ for $d_2 = \text{rank}(\Adiese)$  and
$B$ is a $d_1\times d_2$ known matrix. Indeed, let $A = U^\top\Lambda V$ (resp.\ $\Adiese = \tilde U^\top \tilde\Lambda\tilde V$)
be the singular value decomposition of $A$ (resp.\ $\Adiese$), with orthogonal matrices $U\in\RR^{d_1\times p}$, $V\in\RR^{d_1\times d}$
and a diagonal matrix $\Lambda\in\RR^{d_1\times d_1}$ with positive entries. Then, one can deduce (\ref{A'}) from (\ref{A,b}) by setting
$\bar\t = V(\theta-b)$, $\bartdiese=\tilde V(\tdiese-\bdiese)$ and $B = \Lambda^{-1}U\tilde U^\top\tilde\Lambda$. Of course, the same
transformation should be applied to the observed noisy features, which leads to $\bar X_i= V(X_i-b)$ and
$\barXdiese_i=\tilde V(\Xdiese-\bdiese)$. Since $V$ and $\tilde V$ are orthogonal matrices, \textit{i.e.}, satisfy the relations
$VV^\top = \mathbf I_{d_1}$ and $\tilde V\tilde V^\top = \mathbf I_{d_2}$, the noise component in the transformed noisy features is still white
Gaussian.

All the four estimators introduced in Section~\ref{section:estimators} can be adapted to deal with such type of
criterion. For example, denoting by $M$ the matrix $B(B^\top B)^+B^\top+BB^\top$ where $M^+$ is the
Moore-Penrose pseudoinverse of the matrix $M$, the LSL estimator should be modified as follows
\begin{equation}
\pi^{\textrm{LSL}}  \triangleq \arg\min_{\pi\in\Sn} \sum_{i=1}^n \log \|M^+(\bar X_{\pi(i)}-B\barXdiese_i)\|^2. \label{LSL1}
\end{equation}
All the results presented before can be readily extended to this case. In particular, if we assume that all the nonzero singular values of
$B$ are bounded and bounded away from 0 and $\text{rank}(B)=q$, then the minimax rate of separation is given by
$\max\big\{(\log n)^{1/2}, (q \log n)^{1/4}\big\}$. This rate is achieved, for instance, by the LSL estimator.

Let us briefly mention two situations in which this kind of general affine criterion may be useful. First, if each feature $\theta$ (resp.\
$\tdiese$) corresponds to a patch in an image $I$ (resp.\ $\Idiese$), then for detecting pairs of patches that match each other
it is often useful to neglect the changes in illumination. This may be achieved by means of the criterion $A\theta=A\tdiese$
with $A =  \mathbf I_d-\frac1{d}\mathbf 1_d\mathbf 1_d^\top$. Indeed, the multiplication of the vector $\theta$ by $A$ corresponds
to removing from pixel intensities the mean pixel intensity of the patch. This makes the feature invariant by change of illumination.
The method described above applies to this case and the corresponding rate is $\max\big\{(\log n)^{1/2}, ((d-1)\log n)^{1/4}\big\}$
since the matrix $A$ is of rank $d-1$. Second, consider the case when each feature
combines the local descriptor of an image and the location in the image at which this descriptor is computed. If we have at our
disposal an estimator of the transformation that links the two images and if this transformation is linear, then we are in the
aforementioned framework. For instance, let each $\theta$ be composed of a local descriptor $\mathbf d\in\RR^{d-2}$ and its location
$\mathbf x\in\RR^2$. Assume that the first image $I$ from which the features $\theta_i=[\mathbf d_i,\mathbf x_i]$ are extracted is
obtained from the image $\Idiese$ by a rotation of the plane. Let $R$ be an estimator of this rotation and
$\tdiese_i=[\bddiese_i,\bxdiese_i]$ be the features extracted from the image $\Idiese$. Then, the aim is to find the permutation $\pi$
such that $\bddiese_i=\mathbf d_{\pi(i)}$ and $\bxdiese_i=R\mathbf x_{\pi(i)}$ for every $i=1,\ldots,n$. This corresponds to taking in (\ref{A'})
the matrix $B$ given by
$$
B =\begin{pmatrix}
\mathbf I_{d-2} & 0\\
0 & R
\end{pmatrix}.
$$
This matrix $B$ being orthogonal, the resulting minimax rate of separation is exactly the same as when $B = \mathbf I_d$.

\begin{remark}
An interesting avenue for future research concerns the determination of the minimax rates in the case when the
equivalence of two features is understood under some transformation $A$ which is not completely determined.
For instance, one may consider that the features $\theta$ and $\theta'$ match if there is a matrix $A$
in a given parametric family $\{A_\tau:\tau\in\RR\}\subset \RR^{d\times d}$ for which $\theta = A\theta'$. In
other terms, $\theta\equiv\theta'$ is understood as $\inf_\tau \|\theta-A_\tau\theta'\|=0$. Such a criterion of
matching may be useful for handling various types of invariance (see \cite{CollierDalalyan2011,Collier2012} for
invariance by translation).
\end{remark}

\subsection{Estimation of an Arrangement}\label{sec:arrangement}

An interesting extension concerns the case of the estimation of a general arrangement, \ie the case when $m$
and $n$ are not necessarily identical. In such a situation, without loss of generality, one can assume that
$n\le m$ and look for an injective function $$\pi^*:\{1,\ldots,n\}\to\{1,\ldots,m\}.$$ All the estimators presented
in Section~\ref{section:estimators} admit natural counterparts in this rectangular setting. Furthermore, the
computational method using the Birkhoff-von Neumann theorem is still valid in this setting, and is justified
by the extension of the Birkhoff-von Neumann theorem recently proved by \citet{BudishCheKojimaMilgrom2009}.
In this case, the minimization should be carried out over the set of all matrices $P$ of size $(n,m)$ such
that $P_{i,j}\ge 0,$ and
\begin{equation*}
\begin{cases}
\ \sum_{i=1}^n P_{i,j}\le 1 \\
\ \sum_{j=1}^m P_{i,j}= 1
\end{cases},\quad (i,j)\in\{1,\ldots,n\}\times\{1,\ldots,m\}.
\end{equation*}
From a practical point of view, it is also important to consider the issue of
robustness with respect to the presence of outliers, \ie when for some $i$ there is no
$\Xdiese_j$ matching with $X_i$. The detailed exploration of this problem being out
of scope of the present paper, let us just underline that the LSL-estimator seems
to be well suited for such a situation because of the robustness of the
logarithmic function. Indeed, the correct matches are strongly rewarded because
$\log(0)=-\infty$ and the outliers do not interfere too much with the estimation of
the arrangement thanks to the slow growth of $\log$ in $\pinf$.

\section{Experimental Results}\label{section:simulations}

We have implemented all the procedures in Matlab and carried out numerical experiments on synthetic data.
To simplify, we have used the general-purpose solver SeDuMi~\citep{SeDuMi} for solving linear programs.
We believe that it is possible to speed-up the computations by using more adapted first-order optimization
algorithms, such as coordinate gradient descent. However, even with this simple implementation, the running
times are reasonable: for a problem with $n=500$ features, it takes about six seconds to compute a solution
to (\ref{cont}) on a standard PC.

\begin{figure}[ht!]
{\centerline{\includegraphics[width=0.48\textwidth]{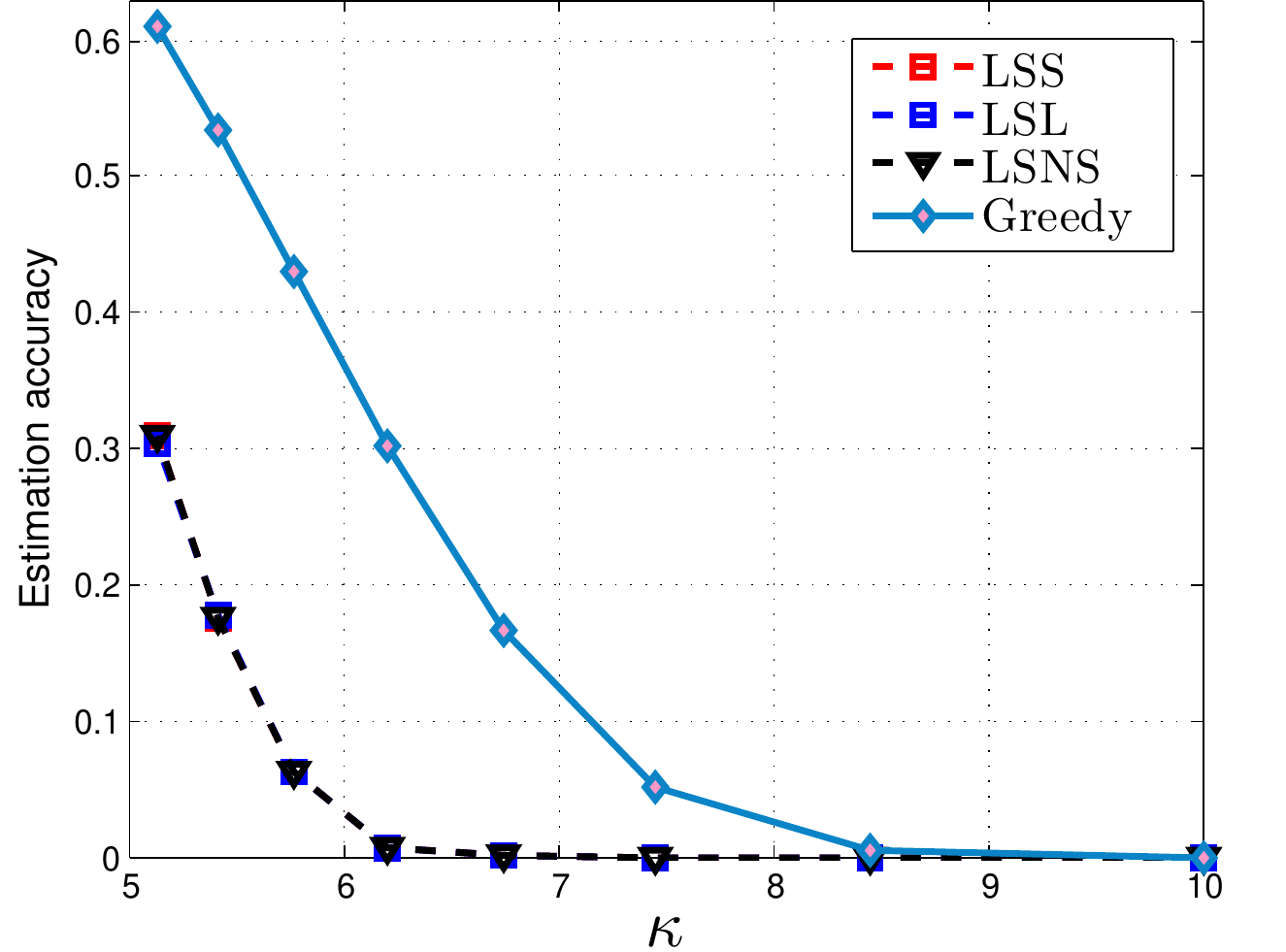}}}
\vspace{-5pt}
\caption{\small Average error rate of the four estimating procedures in the experiment with homoscedastic noise
as a function of the minimal distance $\k$ between distinct features.  One can observe that the LSS, LSNS and LSL
procedures are indistinguishable  and perform much better than the greedy algorithm.\label{fig3:1}
\vspace{-5pt}}
\end{figure}

\begin{figure}[ht!]
\centerline{\includegraphics[width=0.48\textwidth]{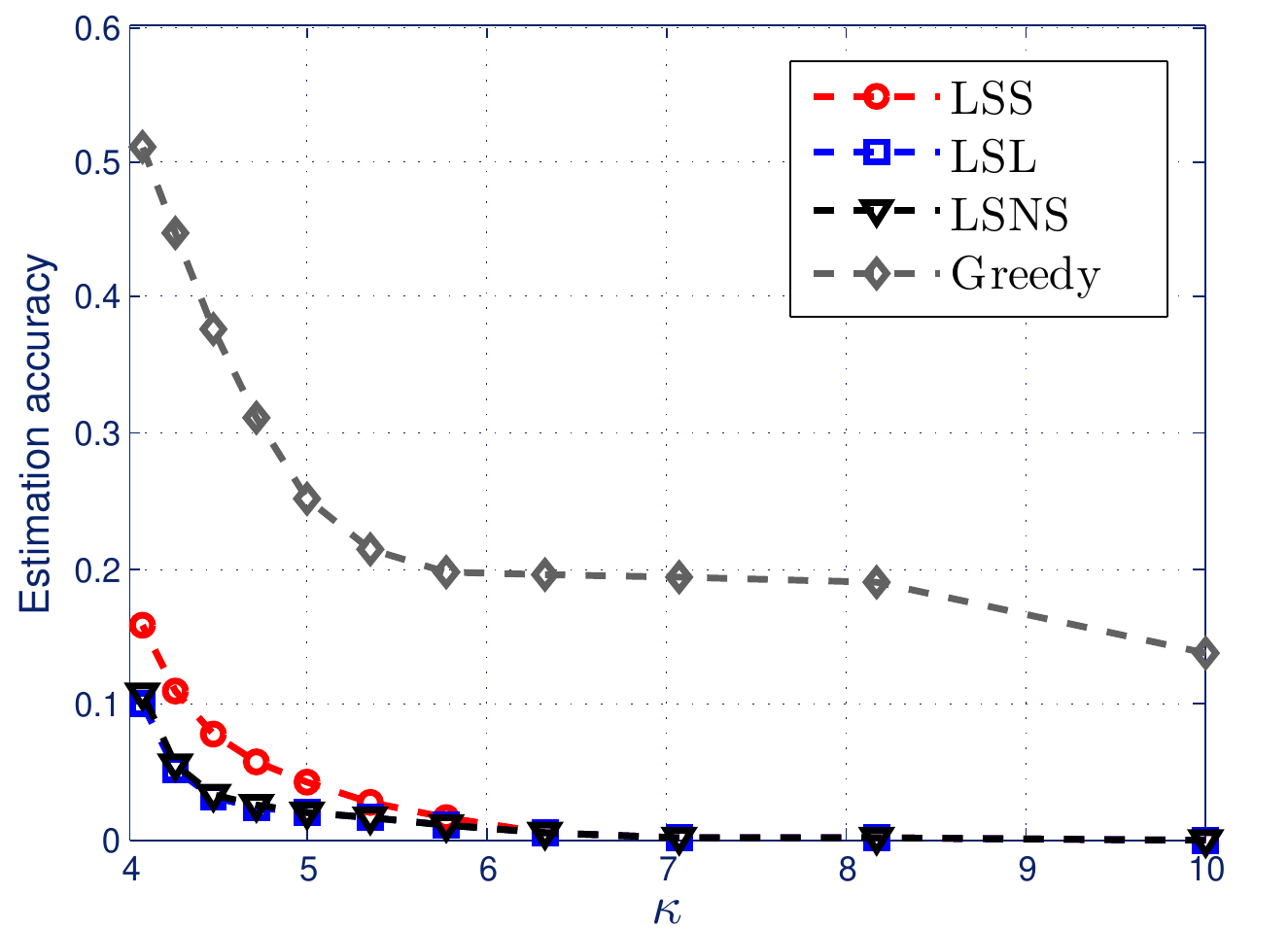}\
\includegraphics[width=0.48\textwidth]{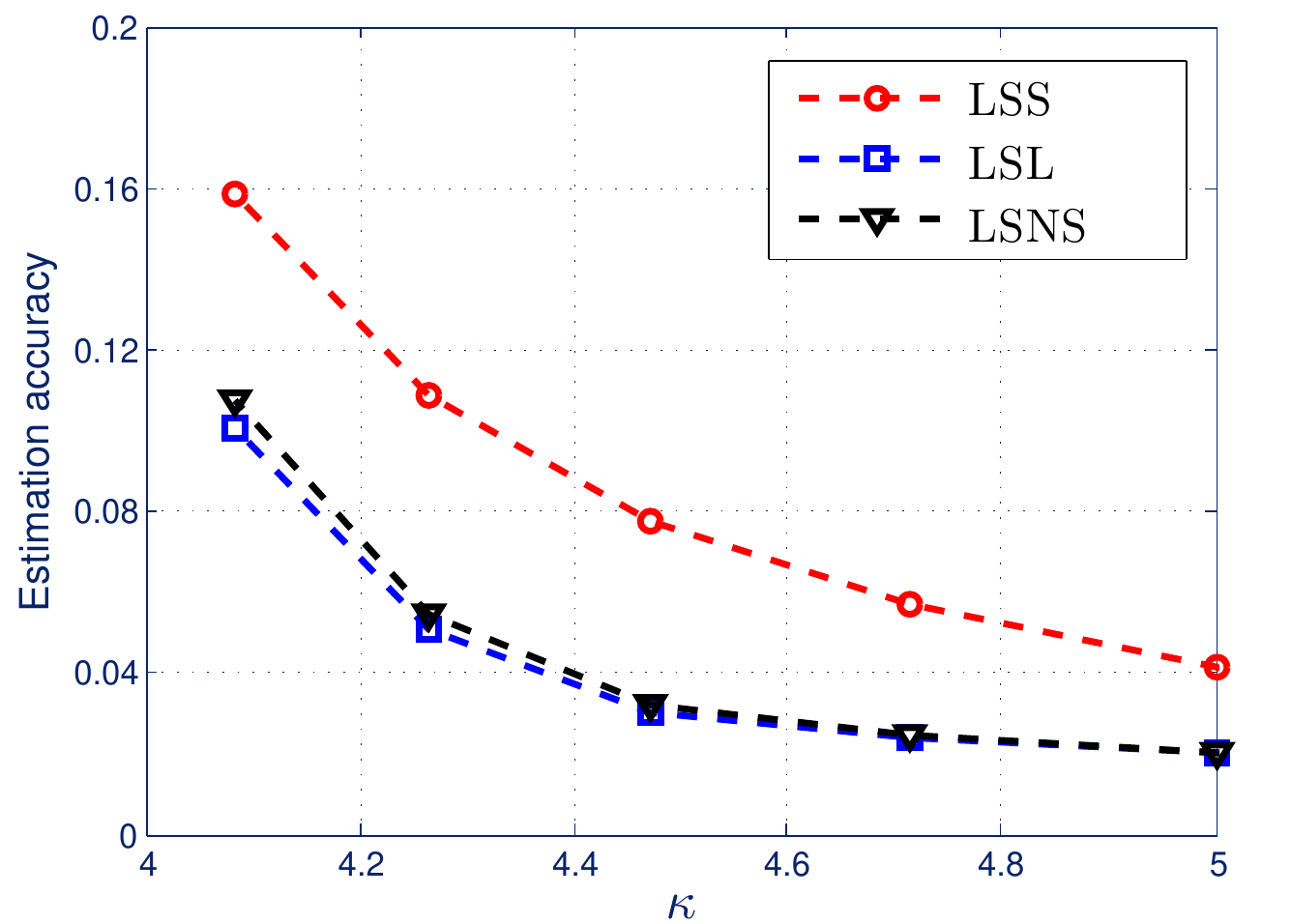}}
\vspace{-5pt}
\caption{\small Left: Average error rate of the four estimating procedures in the experiment with heteroscedastic noise
as a function of the minimal distance $\k$ between distinct features. Right: zoom on the same plots.
One can observe that the LSNS and LSL are almost indistinguishable and, as predicted by the theory, perform better
than the LSS and the greedy algorithm.\label{fig3:2}
\vspace{-5pt}}
\end{figure}

\paragraph{Homoscedastic noise}

We chose $n=d=200$ and randomly generated a $n\times d$ matrix $\btheta$
with i.i.d.\ entries uniformly distributed on $[0,\tau]$, with several values
of $\tau$ varying between $1.4$ and  $3.5$. Then, we randomly chose a permutation
$\pi^*$ (uniformly from $\Sn$) and generated the sets $\{X_i\}$ and $\{\Xdiese_i\}$
according to (\ref{model}) with $\sigma_i=\sdiese_i=1$. Using these sets as data,
we computed the four estimators of $\pi^*$ and evaluated the average error rate
$\frac1n\sum_{i=1}^n \fcar_{\{\hat\pi(i)\not=\pi^*(i)\}}$. The result, averaged over
500 independent trials, is plotted in Fig.\ \ref{fig3:1}.

Note that the three estimators originating from the maximum likelihood methodology lead to the same estimators, while the greedy algorithm
provides an estimator which is much worse than the others when the parameter $\k$ is small.

\paragraph{Heteroscedastic noise}

This experiment is similar to the previous one, but the noise level is not constant.
We still chose $n=d=200$ and defined $\btheta= \tau I_d$, where $I_d$ is
the identity matrix and $\tau$ varies between $4$ and  $10$. Then, we randomly chose a permutation
$\pi^*$ (uniformly from $\Sn$) and generated the sets $\{X_i\}$ and $\{\Xdiese_i\}$
according to (\ref{model}) with $\sigma_{\pi^*(i)}=\sdiese_i=1$ for 10 randomly chosen values of $i$ and
$\sigma_{\pi^*(i)}=\sdiese_i=0.5$ for the others. Using these sets as data,
we computed the four estimators of $\pi^*$ and evaluated the average error rate
$\frac1n\sum_{i=1}^n \fcar_{\{\hat\pi(i)\not=\pi^*(i)\}}$. The result, averaged over
500 independent trials, is plotted in Fig.\ \ref{fig3:2}.

Note that among the noise-level-adaptive estimators, LSL outperforms the two others and
is as accurate as, and even slightly better than the LSNS pseudo-estimator. This confirms
the theoretical findings presented in foregoing sections.

\section{Conclusion and Future Work}

Motivated by the problem of feature matching, we proposed a rigorous framework for studying the
problem of permutation estimation from a minimax point of view. The key notion in our framework
is the minimax rate of separation, which plays the same role as in the statistical hypotheses testing
theory \citep{Ingster}. We established theoretical guarantees for several natural estimators and proved
the optimality of some of them. The results appeared to be quite different in the homoscedastic and in
the heteroscedastic cases. However, we have shown that the least sum of logarithms estimator outperforms
the other procedures both theoretically and empirically.

Several avenues of future work have been already mentioned in previous sections. In particular, investigating
the statistical properties of the arrangement estimation problem described in Section~\ref{sec:arrangement}
and considering the case of unspecified transformation relating the features may have a significant impact on
the practice of  feature matching.

Another interesting question is to extend the statistical inference developed here for the problem of feature
matching to the more general assignment problem. The latter aims at assigning $m$ tasks to $n$ agents such that
the cost of assignment is as small as possible. Various settings of this problem have been considered in the
literature \citep{Pentico}  and many algorithms for solving the problem have been proposed \citep{Romeijn}. However,
to the best of our knowledge, the statistical aspects of the problem in the case where the cost matrix is
corrupted by noise have not been studied so far.

\section{Proofs of the Theorems}\label{section:theorems}

In this section we collect the proofs of the theorems. We start with the proof of Theorem~\ref{upperbound2}, since it concerns the more general setting and the proof of Theorem~\ref{upperbound1} can be deduced from that of Theorem~\ref{upperbound2} by simple arguments. We then prove the other theorems in the usual order and postpone the proofs of some technical lemmas to the next section.

\begin{proofof}[Proof of Theorem~\ref{upperbound2}]
To ease notation and without loss of generality, we assume that $\pi^*$ is the identity permutation denoted by $\id$. Furthermore, since there is no risk of confusion, we write $\prob$ instead of $\prob_{\btheta,\bsigma,\pi^*}$. We wish to bound the probability of the event $\O = \{\hat\pi\neq \id\}$.

Let us first denote by $\hat\pi$ the maximum likelihood estimator $\pi^{\textrm{LSL}}$ defined by (\ref{LSL}). We have
\begin{equation*}
\O \subset \bigcup_{\pi\not=\id} \O_\pi,
\end{equation*}
where
\begin{align*}
\O_\pi &
		= \Big\{\sum_{i=1}^n \log \frac{\|X_i-\Xdiese_i\|^2}{\|X_{\pi(i)}-\Xdiese_i\|^2}\ge 0\Big\}
		= \Big\{\sum_{i : \pi(i)\not= i} \log \frac{\|X_i-\Xdiese_i\|^2}{\|X_{\pi(i)}-\Xdiese_i\|^2}\ge 0\Big\}.
\end{align*}
On the one hand, for every permutation $\pi$,
\begin{align*}
\sum_{\pi(i)\not =i} \log\Big({\textstyle\frac{2\s_i^2}{\s_i^2+\s_{\pi(i)}^2}}\Big)
 &= \sum_{i=1}^n \big(\log(2\s_i^2) - \log(\s_i^2+\s_{\pi(i)}^2) \big)\\
 &=  \sum_{i=1}^n \frac{\log(2\s_i^2)+\log(2\s_{\pi(i)}^2)}2-\log(\s_i^2+\s_{\pi(i)}^2)
\end{align*}
so, using the concavity of the logarithm, this quantity is nonpositive. Therefore,
\begin{align*}
\O_\pi
    &\subset  \Big\{\sum_{i : \pi(i)\not= i}\!\! \log \textstyle\frac{\|X_i-\Xdiese_i\|^2/(2\s_i^2)}
        {\|X_{\pi(i)}-\Xdiese_i\|^2/(\s_i^2+\s_{\pi(i)}^2)}\ge 0\Big\}\\
    &\subset \bigcup_{i=1}^n \bigcup_{j\not =i}\Big\{\frac{\|X_i-\Xdiese_i\|^2}{2\s_i^2}\ge\frac{\|X_{j}-\Xdiese_i\|^2}{\s_j^2+\s_{i}^2}\Big\}.
\end{align*}
This readily yields $\O\subset  \bar\O$, where
\begin{align}\label{inclusion}
\bar\O =\bigcup_{i=1}^n \bigcup_{j\not =i}\Big\{\frac{\|X_i-\Xdiese_i\|^2}{2\s_i^2}\ge \frac{\|X_{j}-\Xdiese_i\|^2}{\s_j^2+\s_{i}^2}\Big\}.
\end{align}
Furthermore, the same inclusion is true for the LSNS estimator as well. Therefore, the rest of the proof
is common for the estimators LSNS and LSL.

We set $\sigma_{i,j}=(\sigma_i^2+\sigma_j^2)^{1/2}$ and
\begin{align*}
\z_1 &= \max_{i\not =j} \bigg|
\frac{(\t_{i}-\t_j)^\top(\s_i\xi_i-\s_j\xidiese_j)}{\|\t_i-\t_j\|\s_{i,j}}\bigg|, \quad
\z_2 = d^{-1/2} \max_{i,j }  \bigg|\,\bigg\|\frac{\s_i\xi_i-\s_j\xidiese_j}{\s_{i,j}}\bigg\|^2 - d\bigg|.
\end{align*}
Since $\pi^*=\id$, it holds that for every $i\in\{1,\ldots,n\}$,
\begin{align*}
\|X_i-\Xdiese_i\|^2 &=  \s_i^2 \|\xi_i-\xidiese_i\|^2 \le 2\s_i^2(d+\sqrt{d}\z_2).
\end{align*}
Similarly, for every $j\not =i$,
\begin{align*}
\|X_{j}-\Xdiese_i\|^2
    &=  \|\t_{j}-\t_i\|^2+ \|\s_{j}\xi_{j}-\s_i\xidiese_i\|^2+ 2(\t_{j}-\t_i)^\top(\s_{j}\xi_{j}-\s_i\xidiese_i).
\end{align*}
Therefore,
\begin{align*}
\|X_{j}-\Xdiese_i\|^2
    &\ge  \|\t_{j}-\t_i\|^2+\s_{i,j}^2 (d-\sqrt{d}\z_2) - 2\|\t_{j}-\t_i\|\s_{i,j}\z_1.
\end{align*}
This implies that on the event $\O_1 = \{\bar\k(\btheta,\bsigma)\ge \z_1\}$  it holds that
\begin{align*}
\frac{\|X_{j}-\Xdiese_i\|^2}{\s_{i,j}^2}
    \ge  \bar\k(\btheta,\bsigma)^2 - 2\bar\k(\btheta,\bsigma)\z_1+d-\sqrt{d}\z_2.
\end{align*}
Combining these bounds, we get that
$$
\O\cap \O_1\subset  \Big\{d+\sqrt{d}\z_2\ge\bar\k(\btheta,\bsigma)^2 - 2\bar\k(\btheta,\bsigma)\z_1+d-\sqrt{d}\z_2\Big\},
$$
which implies that
\begin{align}
\prob(\O)
        &\le \prob(\O_1^\complement) + \prob\big(\O\cap\O_1\big)\nonumber\\
        &\le \prob\big(\z_1\ge \bar\k(\btheta,\bsigma)\big)
            +\prob(2\sqrt{d}\z_2+2\bar\k(\btheta,\bsigma)\z_1\ge\bar\k(\btheta,\bsigma)^2)\nonumber\\
        &\le 2\prob\big(\z_1\ge \textstyle\frac{\bar\k(\btheta,\bsigma)}4\big)+
            \prob\big(\z_2\ge \textstyle\frac{\bar\k(\btheta,\bsigma)^2}{4\sqrt{d}}\big).\label{ineq:2}
\end{align}
Finally, one easily checks that for suitably chosen random variables $\z_{i,j}$ drawn from the standard Gaussian distribution,
it holds that $\z_1 = \max_{i\not=j}|\z_{i,j}|$. Therefore, using the well-known tail bound for the standard Gaussian
distribution in conjunction with the union bound, we get
\begin{align}
\prob\big(\z_1\ge \textstyle\frac14\bar\k(\btheta,\bsigma)\big)
    &\le \sum_{i\not =j}\nolimits \prob\big(|\z_{i,j}|\ge \textstyle\frac14\bar\k(\btheta,\bsigma)\big)
    \le 2 n^2 e^{-\frac1{32}\bar\k(\btheta,\bsigma)^2}.\label{ineq:3}
\end{align}
To bound the large deviations of the random variable $\zeta_2$, we rely on the following result.
\begin{lemma}[\cite{LaurentMassart2000}, Eq. (4.3) and (4.4)]\label{concentration}
If $Y$ is drawn from the chi-squared distribution  $\chi^2(D)$, where $D\in\mathbb N^*$, then, for every $x>0$,
\begin{equation*}
\begin{cases}
\ \prob\big(Y-D \le -2\sqrt{Dx} \big) \le e^{-x}, \phantom{\Big()}\\
\ \prob\big(Y-D \ge 2\sqrt{Dx} + 2x \big) \le e^{-x}. \phantom{\Big()}
\end{cases}
\end{equation*}
As a consequence, $\forall y>0$,
$\prob\big(D^{-1/2}|Y-D| \ge y\big) \le 2\exp\big\{-\textstyle\frac18 y(y\wedge \sqrt{D})\big\}$.
\end{lemma}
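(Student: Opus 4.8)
The plan is to derive the two-sided tail bound from the one-sided bounds by a direct substitution, matching the parameter $x$ to the desired deviation level $y$. The statement to prove is that for $Y\sim\chi^2(D)$ and every $y>0$,
$$
\prob\big(D^{-1/2}|Y-D|\ge y\big)\le 2\exp\big\{-\textstyle\frac18 y(y\wedge\sqrt D)\big\}.
$$

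First I would handle the lower tail. The event $D^{-1/2}(D-Y)\ge y$ is the event $Y-D\le -\sqrt{D}\,y$, and writing $\sqrt{D}\,y = 2\sqrt{Dx}$ forces the choice $x = y^2/4$. The first displayed inequality of the lemma then gives $\prob(Y-D\le -\sqrt{D}\,y)\le e^{-y^2/4}$. Since $y^2/4 = \frac14 y\cdot y \ge \frac18 y(y\wedge\sqrt D)$ whenever $y\le \sqrt D$, and when $y>\sqrt D$ we have $y^2/4 > \frac14 y\sqrt D\ge \frac18 y\sqrt D = \frac18 y(y\wedge\sqrt D)$, the lower-tail probability is bounded by $\exp\{-\frac18 y(y\wedge\sqrt D)\}$ in all cases. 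Actually the cleanest route is to note $e^{-y^2/4}\le e^{-\frac18 y(y\wedge\sqrt D)}$ always, because $y^2/4\ge \frac18 y(y\wedge \sqrt D)$ holds trivially (the right side never exceeds $\frac18 y^2\le\frac14 y^2$).

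Next I would treat the upper tail, which is where the $\wedge\sqrt D$ comes from. The event $D^{-1/2}(Y-D)\ge y$ means $Y-D\ge\sqrt D\,y$. To use the second displayed inequality I want $2\sqrt{Dx}+2x\le\sqrt D\,y$, i.e.\ I need to choose $x$ with $2\sqrt{Dx}+2x$ as large as possible but still at most $\sqrt D\,y$; equivalently I pick $x$ so that $\{Y-D\ge\sqrt D\,y\}\subset\{Y-D\ge 2\sqrt{Dx}+2x\}$. The standard trick is to take $x = \frac18 y(y\wedge\sqrt D)$ and verify $2\sqrt{Dx}+2x\le\sqrt D\,y$ in the two regimes. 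If $y\le\sqrt D$, then $x = y^2/8$, so $2\sqrt{Dx} = 2\sqrt{D}\,y/(2\sqrt2) = \sqrt{D/2}\,y\le\sqrt D\,y/\sqrt2$ and $2x = y^2/4\le \sqrt D\, y/4$; summing, $2\sqrt{Dx}+2x\le (\frac1{\sqrt2}+\frac14)\sqrt D\,y<\sqrt D\,y$. If $y>\sqrt D$, then $x = y\sqrt D/8$, so $2\sqrt{Dx} = 2\sqrt{D}\cdot\sqrt{y\sqrt D}/(2\sqrt2) = \sqrt{D\,y\sqrt D/2}\le\sqrt D\,y/\sqrt2$ (using $\sqrt D\le y$ inside), and $2x = y\sqrt D/4\le\sqrt D\,y/4$; again the sum is below $\sqrt D\,y$. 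Hence in both cases $\prob(Y-D\ge\sqrt D\,y)\le e^{-x} = \exp\{-\frac18 y(y\wedge\sqrt D)\}$. Combining the two tails via a union bound gives the factor $2$ and finishes the proof.

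**The main obstacle** is purely bookkeeping: choosing the right value of $x$ in the upper-tail inequality and checking the inequality $2\sqrt{Dx}+2x\le\sqrt D\,y$ separately in the regimes $y\le\sqrt D$ and $y>\sqrt D$. There is no conceptual difficulty — it is a one-line consequence of Laurent--Massart once the substitution is pinned down — but the two-regime case analysis must be done carefully so that the same exponent $\frac18 y(y\wedge\sqrt D)$ emerges from both tails and both regimes.
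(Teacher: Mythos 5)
Your derivation is correct and is exactly the intended route: the paper itself gives no written proof of the two-sided consequence (it simply cites the Laurent--Massart one-sided bounds), and your choice $x=\frac18 y(y\wedge\sqrt D)$ for the upper tail together with the check $2\sqrt{Dx}+2x\le(\tfrac1{\sqrt2}+\tfrac14)\sqrt D\,y<\sqrt D\,y$ in both regimes, plus the trivial lower-tail substitution $x=y^2/4$ and a union bound, is the standard way to obtain it. Nothing to correct.
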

This inequality, combined with the union bound, yields
\begin{align}
\prob\Big(\z_2 \ge \textstyle\frac{\bar\k(\btheta,\bsigma)^2}{4\sqrt{d}}\Big)
        &\le 2n^2 \exp\Big\{-\frac{(\bar\k(\btheta,\bsigma)/16)^2}{d}(\bar\k^2(\btheta,\bsigma)\wedge 8d)\Big\}.\label{ineq:4}
\end{align}
Combining inequalities (\ref{ineq:2})-(\ref{ineq:4}), we obtain that as soon as
\begin{align*}
\bar\k(\btheta,\bsigma)\ge 4\Big(\sqrt{2\log(8n^2/\a)} \vee \big(d\log(4n^2/\a)\big)^{1/4}\Big),
\end{align*}
we have $\prob(\hat\pi\not = \pi^*)=\prob(\O)\le \a$.
\vspace{-20pt}

~
\end{proofof}

\begin{proofof}[Proof of Theorem~\ref{upperbound1}]
Without loss of generality, we assume $\pi =\id$. It holds that, on the event
$$
\mathcal A= \bigcap_{i=1}^n\bigcap_{j\not =\pi^*(i)}\Big\{ \|X_{\pi^*(i)}-\Xdiese_{i}\|< \|X_{j}-\Xdiese_{i}\|\Big\},
$$
all the four estimators coincide with the true permutation $\pi^*$. Therefore, we have
$$
\{\hat \pi\not=\pi^*\} \subseteq \bigcup_{i=1}^n\bigcup_{j\not =i}\Big\{ \|X_{i}-\Xdiese_{i}\|\geq  \|X_{j}-\Xdiese_{i}\|\Big\}.
$$
The latter event is included in $\bar\O$ at the right-hand side of (\ref{inclusion}), the probability of which has been
already shown to be small in the previous proof.
\end{proofof}

\begin{proofof}[Proof of Theorem~\ref{lowerbound1}]
We refer the reader to the proof of Theorem~\ref{lowerbound2} below, which concerns the more general situation. Indeed,
when all the variances $\s_j$ are equal, Theorem~\ref{lowerbound2} boils down to
Theorem~\ref{lowerbound1}.
\end{proofof}

\begin{proofof}[Proof of Theorem~\ref{lowerbound2}]
To establish lower bounds for various types of risks we will  use the following lemma:
\begin{lemma}[\citet{Tsybakov2009}, Theorem 2.5]\label{Tsybakov}
Assume that for some integer $M\ge 2$ there exist distinct permutations $\pi_0,\ldots,\pi_M\in\Sn$ and mutually absolutely continuous
probability measures $\bQ_0,\ldots,\bQ_M$ defined on a common probability space $(\mathcal Z,\mathscr Z)$ such that
\begin{equation*}
 \frac{1}{M} \sum_{j=1}^M K(\bQ_j,\bQ_0) \le \frac{1}{8} \log M.
\end{equation*}
Then, for every measurable mapping $\tilde\pi:\mathcal Z\to\Sn$,
\begin{equation*}
\max_{j=0,\ldots,M} \bQ_j(\tilde\pi\neq\pi_j) \ge \frac{\sqrt{M}}{\sqrt{M}+1} \Big( \frac{3}{4}-\frac{1}{2\sqrt{\log M}} \Big).
\end{equation*}
\end{lemma}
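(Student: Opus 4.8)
This is a standard Fano-type lower bound; the plan is to follow the usual two-step route: first pass from the problem of recovering which $\pi_j$ generated the data to a multiple hypothesis test among the $M+1$ candidates, then apply a Fano-type inequality to that test. Only the second step carries content here, since the $\pi_j$ are distinct permutations.

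\emph{Reduction to a test.} Given any measurable $\tilde\pi:\mathcal Z\to\Sn$, define a test $\psi:\mathcal Z\to\{0,1,\ldots,M\}$ by $\psi(z)=j$ if $\tilde\pi(z)=\pi_j$ for some (then necessarily unique) $j$, and $\psi(z)=0$ otherwise. Since $\{\tilde\pi=\pi_j\}\subseteq\{\psi=j\}$, we get $\bQ_j(\tilde\pi\neq\pi_j)\ge\bQ_j(\psi\neq j)$ for each $j$, hence $\max_{0\le j\le M}\bQ_j(\tilde\pi\neq\pi_j)\ge p_{e,M}:=\inf_{\psi'}\max_{0\le j\le M}\bQ_j(\psi'\neq j)$, the infimum being over all measurable $\psi':\mathcal Z\to\{0,\ldots,M\}$. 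It therefore suffices to lower-bound $p_{e,M}$ by the asserted quantity.

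\emph{Fano-type bound on $p_{e,M}$.} Fix a test $\psi$, put $A_j=\{\psi=j\}$ for $j=1,\ldots,M$, and combine three ingredients. (i) The $A_j$ are pairwise disjoint, so $\sum_{j=1}^M\bQ_0(A_j)\le 1$; a Markov-type counting argument then shows that for every $\tau\in(0,1)$ the index set $\mathcal J_\tau=\{j:\bQ_0(A_j)\le 1/(\tau M)\}$ has more than $(1-\tau)M$ elements. (ii) Applying the data-processing inequality to the two-cell partition $\{A_j,A_j^\complement\}$ and bounding the binary entropy by $\log 2$ gives $K(\bQ_j,\bQ_0)\ge\bQ_j(A_j)\log\bigl(1/\bQ_0(A_j)\bigr)-\log 2$. (iii) If $p_{e,M}<p$, then $\bQ_j(A_j)>1-p$ for every $j$, so for $j\in\mathcal J_\tau$ one gets $K(\bQ_j,\bQ_0)>(1-p)\log(\tau M)-\log 2$; summing over $j=1,\ldots,M$ (the omitted terms being nonnegative) and dividing by $M$,
\[
\frac1M\sum_{j=1}^M K(\bQ_j,\bQ_0)\ >\ (1-\tau)\bigl[(1-p)\log(\tau M)-\log 2\bigr].
\]
This contradicts the hypothesis $\frac1M\sum_{j=1}^M K(\bQ_j,\bQ_0)\le\frac18\log M$ unless $p$ exceeds a threshold depending on $\tau$ and $M$; choosing $\tau=1/(1+\sqrt M)$ (so that $1-\tau=\sqrt M/(\sqrt M+1)$ and $\log(\tau M)\ge\tfrac12\log M-\log 2$) and then substituting $\tfrac18\log M$ produces the prefactor $\sqrt M/(\sqrt M+1)$ together with a lower-order correction of the advertised $3/4-1/(2\sqrt{\log M})$ shape.

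I expect the bookkeeping in step (iii) to be the only genuine obstacle: one has to choose $\tau$ and track the $-\log 2$ slack from the binary-entropy bound carefully enough that the final inequality $p_{e,M}\ge\frac{\sqrt M}{\sqrt M+1}\bigl(\frac34-\frac1{2\sqrt{\log M}}\bigr)$ holds \emph{uniformly in $M\ge 2$}, in particular in the small-$M$ range where a crude Fano estimate is vacuous. A cleaner but less uniform alternative for step (iii) is the mutual-information form of Fano's inequality, bounding $I\le\frac1{M+1}\sum_{j=0}^M K(\bQ_j,\bar\bQ)\le\frac1M\sum_{j=1}^M K(\bQ_j,\bQ_0)$ with $\bar\bQ=\frac1{M+1}\sum_j\bQ_j$; this yields a bound of the form $\tfrac78-\log 2/\log M$, sharper for large $M$ but not in the stated closed form, which is presumably why the version with the $\sqrt M/(\sqrt M+1)$ prefactor is the one recorded here.
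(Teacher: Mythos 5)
First, note that the paper does not prove this lemma at all: it is imported verbatim from \citet{Tsybakov2009} (Theorem 2.5, used with $\alpha=1/8$), so your sketch has to be judged against that standard proof. Your reduction to a test $\psi$ is fine, but step (iii) neither gives the stated bound nor can be repaired within the Fano/data-processing framework you chose. Solving your inequality $(1-\tau)\bigl[(1-p)\log(\tau M)-\log 2\bigr]\le\frac18\log M$ for $p$ gives $p\ge 1-\bigl(\frac{\log M}{8(1-\tau)}+\log 2\bigr)/\log(\tau M)$. With your choice $\tau=1/(1+\sqrt M)$ this is vacuous at $M=2,3$ (there $\log(\tau M)\le 0$, or the ratio exceeds $1$), whereas the lemma asserts a strictly positive bound for every $M\ge 2$ (about $0.09$ at $M=2$ and $0.17$ at $M=3$); even at $M=100$ your bound is roughly $0.40$, below the asserted $0.47$. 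Re-optimizing $\tau$ does not rescue the small-$M$ range: for $M=2,3$ the right-hand side is nonpositive for every admissible $\tau\in(0,1)$, because the additive $\log 2$ slack from the binary entropy is already of the size of $\log M$. This regime is not negligible here: the paper applies the lemma with $M=3$ (Case 1 of the proof of Theorem~\ref{lowerbound2}) and with $\log M$ as small as $\log 2$ (Theorem~\ref{lowerbound4}). Finally, the correction term your route produces decays like $1/\log M$, not like $1/(2\sqrt{\log M})$, so the claim that it has ``the advertised shape'' is not correct either.

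The missing idea is that the constant $\frac34-\frac1{2\sqrt{\log M}}$ does not come from an entropy argument but from a likelihood-ratio argument. Setting $m=\max_{0\le j\le M}\bQ_j(\psi\ne j)$ and using $\bQ_0(\psi=j)\ge\tau\,\bQ_j\bigl(\{\psi=j\}\cap\{\tfrac{d\bQ_0}{d\bQ_j}\ge\tau\}\bigr)$, one gets $m\ge\frac{\tau M}{1+\tau M}\cdot\frac1M\sum_{j=1}^M\bQ_j\bigl(\tfrac{d\bQ_0}{d\bQ_j}\ge\tau\bigr)$ (Proposition 2.4 in \citet{Tsybakov2009}). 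Each probability is then controlled by Markov's inequality applied to $\bigl(\log\tfrac{d\bQ_j}{d\bQ_0}\bigr)_+$ together with the Pinsker-type bound $\esp_{\bQ_j}\bigl[\bigl(\log\tfrac{d\bQ_j}{d\bQ_0}\bigr)_+\bigr]\le K(\bQ_j,\bQ_0)+\sqrt{K(\bQ_j,\bQ_0)/2}$. Averaging over $j$ (concavity of the square root), using $\frac1M\sum_j K(\bQ_j,\bQ_0)\le\frac18\log M$ and choosing $\tau=1/\sqrt M$, so that $\log(1/\tau)=\frac12\log M$, yields exactly $\frac{\sqrt M}{\sqrt M+1}\bigl(1-\frac14-\frac1{2\sqrt{\log M}}\bigr)$. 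It is the $\sqrt{K/2}$ term that creates the $1/(2\sqrt{\log M})$ correction and keeps the bound nontrivial already at $M=2$; no Fano-type inequality carrying an additive $\log 2$ slack can reproduce that behavior.
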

To prove Theorem~\ref{lowerbound2}, we consider separately the two cases
\begin{align*}
&\textbf{Case 1:}\qquad \max\Big\{(\log n)^{1/2} , (c d \log n)^{1/4}\Big\} = (\log n)^{1/2},\\
&\textbf{Case 2:}\qquad \max\Big\{(\log n)^{1/2} , (c d \log n)^{1/4} \Big\} = (c d \log n)^{1/4}.
\end{align*}
\paragraph{\textbf{Case 1:} We assume that $\k \le \frac18 \sqrt{\log n}$ }
Denote $m$ the largest integer such that $2m\le n$. We assume without loss of generality that the noise levels are ranked in increasing order: $\s_1\leq\ldots\leq\s_n.$
Then, we construct a least favorable set of vectors for the estimation of the permutation. To ease notation, we set $\s_{i,j}=(\s_i^2+\s_j^2)^{1/2}$.
\begin{lemma}\label{construction_btheta}
Assume that $m$ is the largest integer such that $2m\le n$. Then there is a set of vectors $\btheta$ such that
\begin{equation*}
\frac{\|\t_1-\t_2\|}{\s_{1,2}} = \ldots = \frac{\|\t_{2m-1}-\t_{2m}\|}{\s_{2m-1,2m}} = \k,
\end{equation*}
and for every pair $\{i,j\}$ different from the pairs $\{1,2\},\ldots,\{2m-1,2m\} $ we have
\begin{equation*}
\frac{\|\t_i-\t_j\|}{\s_{i,j}} > \k\bigg(1+\frac{\max_{1\le\ell\le n} \sigma_\ell}{\min_{1\le\ell\le n} \sigma_\ell}\bigg).
\end{equation*}
\end{lemma}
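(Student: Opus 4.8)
The plan is to produce $\btheta$ explicitly as a disjoint union of $m$ tightly grouped clusters (plus one singleton when $n$ is odd) placed very far from one another along a fixed direction, so that each ``target'' pair $\{2k-1,2k\}$ sits exactly at the threshold $\k$ while every other pair is separated by a huge margin. Concretely, fix a unit vector $\be\in\RR^d$ and a scale $D>0$ to be specified last. For $k=1,\dots,m$ I set
\[
\t_{2k-1} = kD\,\be,\qquad \t_{2k} = kD\,\be + \k\,\s_{2k-1,2k}\,\be ,
\]
and, in case $n=2m+1$, $\t_n=(m+1)D\,\be$. The first displayed identity of the lemma is then immediate: $\|\t_{2k-1}-\t_{2k}\|=\k\,\s_{2k-1,2k}$, so the ratio equals $\k$ for each $k$.

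For the second claim, take any pair $\{i,j\}$ that is not one of $\{1,2\},\dots,\{2m-1,2m\}$; since each cluster $\{2k-1,2k\}$ (and the singleton $\{n\}$) has at most two elements, the two indices of such a pair necessarily lie in different clusters. Writing $\s_{\max}=\s_n$ and $\s_{\min}=\s_1$, each $\t_i$ lies within distance $\k\,\s_{2k-1,2k}\le\sqrt2\,\k\,\s_{\max}$ of its cluster centre (at distance $0$ for the singleton), while distinct centres are at distance $\ge D$. The triangle inequality then gives $\|\t_i-\t_j\|\ge D-2\sqrt2\,\k\,\s_{\max}$, and $\s_{i,j}=(\s_i^2+\s_j^2)^{1/2}\le\sqrt2\,\s_{\max}$, whence
\[
\frac{\|\t_i-\t_j\|}{\s_{i,j}}\ \ge\ \frac{D-2\sqrt2\,\k\,\s_{\max}}{\sqrt2\,\s_{\max}} .
\]
Choosing $D>\sqrt2\,\k\,\s_{\max}\bigl(3+\s_{\max}/\s_{\min}\bigr)$ makes the right-hand side strictly larger than $\k\bigl(1+\s_{\max}/\s_{\min}\bigr)$, which is exactly the required strict inequality.

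This is a pure construction rather than a statement with a hard core, so there is no real obstacle; the only points that need a moment of care are (i) handling the leftover index when $n$ is odd --- solved by giving it its own isolated cluster so that it never produces an exceptional pair --- and (ii) verifying that the crude triangle-inequality bound above already beats the \emph{fixed} target $\k(1+\s_{\max}/\s_{\min})$, which works precisely because $\s_{i,j}$ is uniformly bounded by $\sqrt2\,\s_{\max}$ no matter how large $D$ is. I will also record, for use in the remainder of the proof of Theorem~\ref{lowerbound2}, that this configuration satisfies $\bar\k(\btheta,\bsigma)=\k$ exactly (the minimum over all pairs being attained on the target pairs), so that $\btheta\in\bar\T_\k$ and the family of permutations obtained by swapping within an arbitrary subset of the target pairs can be fed into Lemma~\ref{Tsybakov}.
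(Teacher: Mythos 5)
Your construction is correct and follows essentially the same route as the paper's proof: an explicit collinear configuration in which each target pair $\{2k-1,2k\}$ is realized at exact relative separation $\k$ and distinct pairs are pushed far apart along a fixed direction, with the odd leftover index handled as an isolated point. The only difference is cosmetic --- the paper spaces the pairs with explicitly written gaps of order $\k\big(1+\max_\ell\s_\ell/\min_\ell\s_\ell\big)$, whereas you introduce a single large spacing $D$ chosen at the end and check the margin by a triangle-inequality bound --- so the two arguments are interchangeable.
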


Denote $\btheta^0$ the constructed set. We define for every $k\in\{1,\ldots,m\}$, the set $\btheta^k=\btheta^0 + (0,\ldots,\eta_k,\eta_k,\ldots,0),$
where only the $(2k-1)^\text{th}$ and $2k^\text{th}$ components are modified by adding the vector
$\eta_k=(\s_{2k}^2-\s_{2k-1}^2)(\t_{2k-1}-\t_{2k})/(2\s_{2k}^2+2\s_{2k-1}^2)$.
It follows from Lemma~\ref{construction_btheta} that $\btheta^0,\ldots,\btheta^m \text{ belong to } \bar\T_\k,$ so that, denoting for every $k\in\{1,\ldots,m\}, \pi_k=(2k-1~2k)$ the transposition of $\Sn$ that only permutes $2k-1$ and $2k$, and $\pi_0=\id,$ we get the following lower bound for the risk:
\begin{equation*}
\inf_{\hat\pi} \sup_{(\pi,\btheta)\in\Sn\times\bar\T_\k} \prob_{\btheta,\pi} (\hat\pi\neq\pi) \ge \inf_{\hat\pi} \max_{k=0,\ldots,m} \prob_{\btheta^{k},\pi_k}(\hat\pi\neq\pi_k).
\end{equation*}
In order to use Lemma~\ref{Tsybakov} with $\bQ_j=\prob_{\btheta^j,\pi_j}$, we compute $\forall k\in\{1,\ldots,m\}$
\begin{align*}
2K(\prob_{\btheta^k,\pi_k},\prob_{\btheta^0,\pi_0}) = &\frac{\|\eta_k\|^2}{2\s_{2k-1}^2}+
\frac{\|\t_{2k-1}+\eta_k-\t_{2k}\|^2}{\s^2_{2k}} + d\Big(\frac{\s^2_{2k-1}}{\s^2_{2k}}-1\Big) \\
& +\frac{\|\eta_k\|^2}{2\s_{2k}^2}+\frac{\|\t_{2k}+\eta_k-\t_{2k-1}\|^2}{\s^2_{2k-1}} + d\Big(\frac{\s^2_{2k}}{\s^2_{2k-1}}-1\Big).
\end{align*}
Using the definition of $\eta_k$, we get
\begin{align*}
2K(\prob_{\btheta^k,\pi_k},\prob_{\btheta^0,\pi_0})
&=  \frac{\|\t_{2k}-\t_{2k-1}\|^2}{\s_{2k}^2+\s_{2k-1}^2}
\bigg(\frac{3\s_{2k-1}^2}{8\s_{2k}^2}+\frac{3\s_{2k}^2}{8\s_{2k-1}^2}+\frac{10}{8} \bigg)
+ d \frac{\s^2_{2k-1}}{\s^2_{2k}} \Big( 1 - \frac{\s^2_{2k}}{\s^2_{2k-1}} \Big)^2 \\
&\le \k^2\bigg(\frac{13}{8}+\frac{3\s_{2k}^2}{8\s_{2k-1}^2}\bigg) + \frac{\log n}{16}.
\end{align*}
Next, we apply the following result.
\begin{lemma}
Let $a_1,a_2,\ldots,a_m$ be real numbers larger than one such that $\prod_{k=1}^m a_k\le A$. Then,
$\sum_{k=1}^m a_k \le n+\log A\max_k a_k$.
\end{lemma}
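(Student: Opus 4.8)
The plan is to separate the constant part of each $a_k$ from its excess over $1$, and to control the excesses by the logarithms, whose sum is bounded by hypothesis. Writing $\sum_{k=1}^m a_k = m + \sum_{k=1}^m (a_k-1)$ and recalling that $m\le n$ (indeed $2m\le n$ by construction), it suffices to prove the inequality $\sum_{k=1}^m (a_k-1) \le (\log A)\,\max_{1\le k\le m} a_k$.

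The key elementary fact I would use is $\log x \ge 1 - 1/x$ for all $x>0$, which after multiplication by $x>0$ becomes $x\log x \ge x-1$. Applying this with $x=a_k$ gives $a_k - 1 \le a_k \log a_k$, and since $a_k>1$ implies $\log a_k > 0$, we may bound $a_k\log a_k \le (\max_{1\le j\le m} a_j)\log a_k$. Summing over $k$ and using $\sum_{k=1}^m \log a_k = \log\prod_{k=1}^m a_k \le \log A$ (note $\prod_{k=1}^m a_k>1$, so $\log A>0$ and all inequalities are consistent), I obtain $\sum_{k=1}^m (a_k-1) \le (\max_{1\le j\le m} a_j)\log A$. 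Adding back $m\le n$ then yields $\sum_{k=1}^m a_k \le n + (\log A)\max_{1\le k\le m} a_k$, as claimed.

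I expect no genuine obstacle in this argument; the only point worth flagging is that one should not hope to replace $\max_k a_k$ by something smaller. The natural bound $a_k-1\le a_k\log a_k$ intrinsically carries the weight $a_k$, and replacing each such weight by the common bound $\max_j a_j$ is precisely what turns the weighted sum of logarithms into the plain sum $\sum_k\log a_k = \log\prod_k a_k$, which is the only quantity the hypothesis $\prod_k a_k\le A$ controls. So the appearance of $\max_k a_k$ in the statement is exactly the price of this reduction.
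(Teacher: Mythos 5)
Your argument is correct and is essentially the paper's own proof in different notation: the paper uses $e^x\le 1+xe^x$ with $x=\log a_k$, which is exactly your inequality $a_k-1\le a_k\log a_k$, followed by the same bound $a_k\log a_k\le(\max_j a_j)\log a_k$ and summation of the logarithms against $\log\prod_k a_k\le\log A$. The only cosmetic difference is your explicit remark that $m\le n$, which the paper leaves implicit (its displayed bound has $m$ in place of $n$).
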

\begin{proof}
We use the simple inequality $e^x\le 1+xe^x$ for all $x\ge 0$. Replacing $x$ by $\log a_k$ and summing over
$k=1,\ldots,m$ we get
$$
\sum_{k=1}^m a_k \le \sum_{k=1}^m 1+ a_k\log a_k\le m+ \max_{k=1,\ldots,m} a_k \log \prod_{k=1}^m a_k \le m+ \max_{k=1,\ldots,m} a_k \log A.
$$
This completes the proof of the lemma.
\end{proof}
We apply this lemma to $a_k = \s_{2k}^2/\s_{2k-1}^2$. Since the variances are sorted in increasing order, we have
$\prod_{k=1}^m a_k \le \prod_{i=1}^{n-1} {\s_{i+1}^2}/{\s_i^2} = \s_n^2/\s_1^2\le 1+\frac14(\frac{\log n}{d})^{1/2}$. In conjunction
with the inequality $\log(1+x)\le x$, this entails that $\sum_{k=1}^m \s_{2k}^2/\s_{2k-1}^2\le m+ \frac14(1+\frac14 (\frac{\log n}{d})^{1/2})(\frac{\log n}{d})^{1/2}$. Then, since $\log n\le 2\log m$ for $n\ge 6$ and $ (\frac{\log n}{d})^{1/2}\le  ({\log n})^{1/2}\le \frac34\log n$, we get
\begin{align*}
\frac1m\sum_{k=1}^m K(\prob_{\btheta^k,\pi_k},\prob_{\btheta^0,\pi_0})
&\le\k^2\bigg(\frac{13}{16}+\frac{3}{16m}\sum_{k=1}^m\frac{\s_{2k}^2}{\s_{2k-1}^2}\bigg) + \frac{\log m}{16}\\
&\le\k^2\bigg(1+\frac{3}{64m}\Big\{1+\frac14 \Big(\frac{\log n}{d}\Big)^{1/2}\Big\}\Big(\frac{\log n}{d}\Big)^{1/2}\bigg) + \frac{\log m}{16}\\
&\le\k^2\Big(1+\frac{3}{64m}\Big) + \frac{\log m}{16}.
\end{align*}
Finally, using the fact that $m\ge 3$, we get $\frac1m\sum_{k=1}^m K(\prob_{\btheta^k,\pi_k},\prob_{\btheta^0,\pi_0}) \le
\frac{65}{64}\k^2+\frac{\log m}{16} \le  \frac{\log m}{8}$ since $\k^2\le \frac2{65}\log n \le \frac4{65}\log m$.
We conclude by Lemma~\ref{Tsybakov} and by the monotonicity of the function $m\mapsto \frac{\sqrt{m}}{1+\sqrt{m}}(\frac34-\frac1{2\sqrt{\log m}})$ that
\begin{equation*}
\inf_{\hat\pi} \sup_{(\pi,\btheta)\in\Sn\times\bar\T_\k} \prob_{\btheta,\pi} (\hat\pi\neq\pi) \ge \frac{\sqrt{3}}{\sqrt{3}+1} \Big( \frac{3}{4} - \frac{1}{2\sqrt{\log3}} \Big)\ge 0.17.
\end{equation*}

\paragraph{\textbf{Case 2:} We assume that $\frac18 \sqrt{\log n}\le \k \le \frac{1}{8} (cd\log n)^{1/4}$}
In this case, we have $d\ge\frac{1}{c}\log n$.  To get the desired result, we use Lemma~\ref{Tsybakov} for a
properly chosen family of probability measures described below.

\begin{lemma}\label{kullback_leibler_divergence_heteroscedastic}
Let $\e_1,\ldots,\e_n$ be real numbers defined by
$$
\e_{{k}} = \sqrt{2/d}\;\k\s_{{k}},\qquad \forall{k}\in\{1,\ldots,n\},
$$
and let $\mu$ be the uniform distribution on $\mathcal E=\{\pm\e_1\}^d\times\ldots\times\{\pm\e_{n}\}^d$. We denote by $\prob_{\mu,\pi}$ 
the probability measure on $\RR^{d\times n}$ defined by $\prob_{\mu,\pi}(A) = \int_{\mathcal E} \prob_{\btheta,\pi}(A)\,\mu(d\btheta)$. 
Assume that $\s_1\le\ldots\le \s_n$. For two positive integers $k<k'\le n$, set $\gamma = \frac{\s_{k'}^2}{\s_{k}^2}$ and let $\pi=({k}~{k}')$ 
be the transposition that only permutes ${k}$ and ${k}'$. Then
\begin{align*}
K(\prob_{\mu,\pi},\prob_{\mu,\id}) &\le  {4\k^2}\big(1-\gamma^{-1}\big)+\frac{8\k^4}{d}\big(2+\big(1+(2/d)\k^2\big)^2\gamma^{2}\big)
+ \frac12\big(d+2\k^2\big)(\gamma-1)^2
\end{align*}
and $\mu(\mathcal E\backslash\bar\T_\k) \le ({n(n-1)}/{2})\ e^{-d/8}$.
\end{lemma}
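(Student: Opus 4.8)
The plan is to prove Lemma~\ref{kullback_leibler_divergence_heteroscedastic} by a direct computation of the Kullback--Leibler divergence between the two mixtures $\prob_{\mu,\pi}$ and $\prob_{\mu,\id}$, exploiting the product structure of $\mu$ and the fact that $\pi=(k~k')$ touches only two coordinates. First I would observe that both mixtures factor over the $n$ features as products of $d$-dimensional Gaussian mixtures, and that the factors corresponding to indices $\ell\notin\{k,k'\}$ are identical under $\pi$ and $\id$; hence by the chain rule for KL divergence, $K(\prob_{\mu,\pi},\prob_{\mu,\id})$ reduces to the divergence between the joint laws of the pair $(X_k,X_{k'})$ (equivalently $(X_k,X_{k'})$ paired with $(\Xdiese_k,\Xdiese_{k'})$) under the two permutations. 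For these two coordinates, under $\id$ the pairs are $(\t_k+\s_k\xi_k,\t_k+\s_k\xidiese_k)$ and $(\t_{k'}+\s_{k'}\xi_{k'},\t_{k'}+\s_{k'}\xidiese_{k'})$, while under $\pi$ the $\tdiese$-side is swapped; after the coordinate $\t_\ell$ is randomized by $\mu$ over $\{\pm\e_\ell\}^d$, the structure is a Gaussian-location mixture and the relevant divergence can be upper bounded.

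The main technical step is to bound the KL divergence between these two-feature mixtures without the mixture itself destroying the Gaussian tractability. I would use the standard device that for mixtures, $K(\prob_{\mu,\pi},\prob_{\mu,\id})\le \esp_{\btheta,\btheta'\sim\mu}\,K(\prob_{\btheta,\pi},\prob_{\btheta',\id})$ is \emph{not} valid directly, but one can instead condition on the $\t$-values on the $\id$ side and use convexity of KL in its first argument only after suitable conditioning; more robustly, I would bound $K(\prob_{\mu,\pi},\prob_{\mu,\id})$ by first passing to $\chi^2$-type or log-likelihood-ratio computations, or by the simple route of writing $\prob_{\mu,\pi}$ and $\prob_{\mu,\id}$ as averages over the \emph{same} product reference and using Jensen in the right direction. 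The cleanest approach is: since $\mu$ is a product measure and the transposition $\pi$ just relabels which Gaussian component the $\Xdiese$-observations are centered at, condition on $\btheta$ drawn from $\mu$; then $\prob_{\mu,\pi}$ and $\prob_{\mu,\id}$ differ only in that the $\Xdiese$-coordinates $k,k'$ see their means swapped, and one computes the two-Gaussian (possibly mixed) divergence explicitly. Each summand of the claimed bound has a transparent origin: the $4\k^2(1-\gamma^{-1})$ term comes from the cross term between the location shift $\|\t_k-\t_{k'}\|$ (of order $\sqrt{2}\,\k\,\s_{\cdot}\sqrt{d}/\sqrt{d}$ in the relevant normalization, giving $\k^2$) and the mismatch in noise scales $\s_k$ vs $\s_{k'}$; the $\frac{8\k^4}{d}(\cdots)$ term is the fourth-order contribution from the squared location shift divided by the larger variance, with the factor $(1+(2/d)\k^2)^2\gamma^2$ tracking the worst-case variance ratio after accounting for the component shift $\e$; and the $\frac12(d+2\k^2)(\gamma-1)^2$ term is the pure variance-mismatch divergence $\frac{d}{2}(\gamma^{-1}-1-\log\gamma^{-1})$ type quantity, bounded using $\gamma-1$ small, plus the correction from the nonzero mean shift.

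For the second assertion, $\mu(\mathcal E\setminus\bar\T_\k)\le \tfrac{n(n-1)}{2}e^{-d/8}$, I would proceed by a union bound over the $\binom{n}{2}$ pairs. For a fixed pair $\{i,j\}$ with (say) $\s_i\le\s_j$, under $\mu$ the vector $\t_i-\t_j$ has independent coordinates, each taking values in $\{0,\pm(\e_i+\e_j),\pm(\e_i-\e_j)\}$ with the symmetric distribution induced by the product of $\{\pm\e_i\}$ and $\{\pm\e_j\}$; in particular $\esp\|\t_i-\t_j\|^2 = d(\e_i^2+\e_j^2) = 2\k^2(\s_i^2+\s_j^2) = 2\k^2\s_{i,j}^2$. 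The event $\|\t_i-\t_j\|/\s_{i,j}<\k$ is thus a lower-tail deviation of a sum of $d$ bounded independent nonnegative terms below half its mean, which by Hoeffding's (or Bernstein's) inequality has probability at most $e^{-d/8}$ once the constants are tracked (the $1/8$ matches a Hoeffding bound with the coordinates bounded by $(\e_i+\e_j)^2\le 2(\e_i^2+\e_j^2)$). Summing over the $\binom{n}{2}<n(n-1)/2$ pairs gives the stated bound; note the construction only needs to control pairs, since $\bar\T_\k$ is defined through $\min_{i\neq j}\|\t_i-\t_j\|/\s_{i,j}\ge\k$.

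The step I expect to be the main obstacle is the first one: getting the KL divergence between the two \emph{mixtures} down to a clean expression with the right numerical constants, because naively bounding a divergence of mixtures by an average of divergences goes the wrong way. The trick that makes it work is that $\pi$ is a transposition acting only on the $\Xdiese$-labels, so conditionally on $\btheta\sim\mu$ the two laws are honest (non-mixed) Gaussians differing only in the assignment of two means, and the residual mixture over $\mu$ only affects which of the $\pm\e$ patterns is active — allowing an exact per-coordinate computation followed by $\log(1+x)\le x$ and the smallness of $\gamma-1$ and $\k^2/d$ to collapse everything into the three advertised terms.
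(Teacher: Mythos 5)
Your reduction to the two coordinates touched by the transposition and your proof of the second claim (union bound over pairs, then a Hoeffding/Rademacher lower-tail bound giving $e^{-d/8}$ per pair) are sound and match the paper. The genuine gap is in the main KL bound. The whole point of mixing over $\{\pm\e_k\}^d$ is that $K(\prob_{\mu,\pi},\prob_{\mu,\id})$ is of order $\k^4/d$ plus terms proportional to the variance mismatch $\gamma-1$, i.e.\ \emph{much smaller} than the divergence between any two fixed components, which is of order $\k^2$. Your operative suggestion --- ``condition on $\btheta$ drawn from $\mu$; then the two laws are honest Gaussians with two means swapped and one computes the two-Gaussian divergence explicitly'' --- is exactly the joint-convexity pairing $K(\prob_{\mu,\pi},\prob_{\mu,\id})\le\esp_{\mu}[K(\prob_{\btheta,\pi},\prob_{\btheta,\id})]$ (which, contrary to your aside, \emph{is} valid with a common mixing variable, but is far too lossy): it produces a term of order $4\k^2$ with no $(1-\gamma^{-1})$ factor. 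In the regime where the lemma is actually used (Case 2 of Theorem~\ref{lowerbound2}, $\k\asymp(d\log n)^{1/4}$ with $d\gtrsim\log n$) such a bound is of order $\sqrt{d\log n}\gg\log n$, so the condition $\frac1M\sum_j K(\bQ_j,\bQ_0)\le\frac18\log M$ of the Fano/Tsybakov lemma could never be verified; the claimed inequality would not follow.

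What is actually needed, and what you never carry out, is the exact computation of the mixture likelihood ratio: because the signs in $\mu$ are independent across coordinates, $\frac{d\prob_{\mu,\pi}}{d\prob_{\mu,\id}}$ factorizes into a product over the $d$ coordinates of ratios of $\cosh$ terms (e.g.\ $\cosh\big(\frac{\e_1}{\s_1^2}(X_{1,k}+\Xdiese_{2,k})\big)\big/\cosh\big(\frac{\e_1}{\s_1^2}(X_{1,k}+\Xdiese_{1,k})\big)$), times an explicit exponential factor coming from the fact that the transposition swaps the \emph{variances} of $\Xdiese_k,\Xdiese_{k'}$ as well as their means (your ``means swapped'' description misses this; it is the source of the $\frac12(d+2\k^2)(\gamma-1)^2$ term). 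The bound then follows from the expansion $\frac{u^2}{2}-\frac{u^4}{12}\le\log\cosh(u)\le\frac{u^2}{2}$: the quadratic contributions of the numerator and denominator cancel up to the factor $(1-\gamma^{-1})$, and the surviving quartic terms give $\frac{8\k^4}{d}(\cdots)$. You assert that the three terms ``have a transparent origin'' but never exhibit this cancellation, and the tool you invoke ($\log(1+x)\le x$) is not the relevant one; without the $\cosh$ identity and the second-order cancellation there is no proof of the stated bound.
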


The assumption on the noise levels entails that, for any integer $k\in\{1,\ldots,k'\}$,
$1\le \frac{\s_{k'}^2}{\s_{k}^2} \le 1+\frac14(\frac{\log n}{d})^{1/2}$,
and consequently, $(\gamma-1)^2=\big(\frac{\s_{k'}^2}{\s_{k}^2} -1\big)^2\le 4^{-2}\big(\frac{\log n}{d}\big)$.
Furthermore, $\frac{\k^2}{d}\le \frac{c}{64}\le \frac1{64}$ provided that $c\le 1$.
Finally, for the Kullback-Leibler divergence between $\prob_{\mu,\pi_{k,k'}}$ and $\prob_{\mu,\id}$, where
$\pi_{k,k'}=(k~k')$ is the transposition from $\Sn$ permuting only $k$ and $k'$, it holds
\begin{align*}
K(\prob_{\mu,\pi_{k,k'}},\prob_{\mu,\id}) &\le {\k^2}\sqrt{\frac{\log n}{d}}+\frac{8\k^4}{d}\Big(2+\frac{33^2}{32^2}\big(1+0.25\big)^2\Big)+
\frac{33d}{64}\times\frac{\log n}{16d}\\
&\le \frac{\log n}{8}\le \frac{\log n(n-1)/2}{8},
\end{align*}
where we have used once again the facts that $c\le 1$ and $n\ge 3$.
Applying Lemma~\ref{Tsybakov} with
$M = n(n-1)/2$, $\bQ_0 = \prob_{\mu,\id}$ and $\{\bQ_j\}_{j=1,\ldots,M}=\{\prob_{\mu,\pi_{k,k'}}\}_{k\not= k'}$,
we obtain
\begin{align*}
\max_{\pi^*\in\Sn}\sup_{\btheta\in\bar\T_\k}\prob_{\btheta,\pi^*}\big(\hat\pi\neq\pi^*\big) &\ge \max_{\pi^*\in\{\id\}\cup\{\pi_{k,k'}\}}
\int_{\bar\T_\k} \prob_{\btheta,\pi^*} \big( \hat\pi \neq \pi^* \big) \frac{\mu(d\btheta)}{\mu(\bar\T_\k)} \\
&\ge \max_{\pi^*\in\{\id\}\cup\{\pi_{k,k'}\}} \prob_{\mu,\pi^*} \big( \hat\pi\neq\pi^*\big) - \mu(\mathcal E\backslash\bar\T_\k) \\
&\ge \frac{\sqrt{15}}{\sqrt{15}+1} \Big( \frac{3}{4} - \frac{1}{2\sqrt{\log15}} \Big) - \frac{n(n-1)}2 e^{-d/8}.
\end{align*}
In view of the inequalities $d\ge (1/c)\log n$, $c\le 1/20$ and $n\ge 6$, we get
the inequality $\max_{\pi^*\in\Sn}\sup_{\btheta\in\bar\T_\k}\prob_{\btheta,\pi^*}\big(\hat\pi\neq\pi^*\big)\ge 22.4\%$.
\end{proofof}

\begin{proofof}[Proof of Theorem~\ref{lowerbound3}]
Since the event $\{\pi^{\textrm{gr}}\not =\id\}$  includes the event
$$
\O_2=\{\|X_{1}-\Xdiese_1\|^2>\|X_{2}-\Xdiese_1\|^2\},
$$
it is sufficient to bound from below the probability of $\O_2$.
To this end, we choose any $\btheta\in\RR^{n\times d}$ satisfying $\|\t_1-\t_2\|=2\k$. This readily implies that
$\btheta$ belongs to $\bar\T_\k$.  Furthermore, for suitably chosen random variables $\eta_1\sim \chi^2_d$, 
$\eta_2\sim \chi^2_d$ and $\eta_3\sim\mathcal N(0,1)$, it holds that
$\|X_{1}-\Xdiese_1\|^2-\|X_{2}-\Xdiese_1\|^2 = 6\eta_1 - 4\k^2-8\k \eta_3 - 4\eta_2$.
The random terms in the last sum can be controlled using Lemma~\ref{concentration}.
More precisely, for every $x>0$, each one of the following three inequalities holds true
with probability at least $1-e^{-x^2}$:
\begin{align*}
\eta_1 \ge  d- 2\sqrt{d} x,\quad
\eta_2 \le  d+ 2\sqrt{d} x + 2x^2,\quad
\eta_3 \le \sqrt{2} x.
\end{align*}
This implies that with probability at least $1-3e^{-x^2}$, we have
\begin{equation*}
\|X_{1}-\Xdiese_1\|^2-\|X_{2}-\Xdiese_1\|^2 \ge  2d-20\sqrt{d} x-4(\k + \sqrt2 x)^2.
\end{equation*}
If $x=\sqrt{\log 6}$, then the conditions imposed on $\k$  and $d$ ensure that the right-hand side of the last inequality is positive. Therefore,
$\prob(\O_2)\ge 1-3e^{-x^2}=1/2$.
\end{proofof}

\begin{proofof}[Proof of Theorem~\ref{lowerbound4}]
The proof is split into two parts. In the first part, we consider the case
$\k\le \frac14\sqrt{\frac{\log M_n}{n}}$, while in the second part the case $\kappa\le \frac18\big(\frac{\log n}{d}\big)^{1/4}$ with
$d\ge 24\log n$ and is analyzed. In both cases, the main tool we use is the following result.

\begin{lemma}[\citet{Tsybakov2009}, Theorem 2.5]\label{Tsybakov2}
Assume that for some integer $M\ge 2$ there exist distinct permutations $\pi_0,\ldots,\pi_M\in\Sn$ and mutually absolutely continuous
probability measures $\bQ_0,\ldots,\bQ_M$ defined on a common probability space $(\mathcal Z,\mathscr Z)$ such that
\begin{equation*}
\begin{cases}
\ \exists\ s>0,\ \forall\ i\neq j, \ \d(\pi_i,\pi_j)\ge 2s, \\
\  \frac{1}{M} \sum_{j=1}^M K(\bQ_j,\bQ_0) \le \frac{1}{8} \log M.
\end{cases}
\end{equation*}
Then, for every measurable mapping $\tilde\pi:\mathcal Z\to\Sn$,
\begin{equation*}
\max_{j=0,\ldots,M} \bQ_j\big(\d(\hat\pi,\pi_j)\ge s\big) \ge \frac{\sqrt{M}}{\sqrt{M}+1} \Big( \frac{3}{4}-\frac{1}{2\sqrt{\log M}} \Big).
\end{equation*}
\end{lemma}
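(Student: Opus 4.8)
The final statement is Theorem~\ref{lowerbound4}, whose proof must establish a lower bound of $2.15\%$ on the minimax Hamming risk under two alternative sets of assumptions. The tool is the variant of Tsybakov's lemma (Lemma~\ref{Tsybakov2}) adapted to Hamming-type losses, which requires a well-separated family of permutations (in the loss metric) together with a Kullback-Leibler bound averaging at most $\frac18\log M$. So the plan is, in each of the two regimes, to (i) exhibit a finite family $\pi_0,\dots,\pi_M$ of permutations that is $2s$-separated in the Hamming distance for a suitable $s$, (ii) attach to each $\pi_j$ a probability measure $\bQ_j$ built from an admissible nuisance configuration $\btheta\in\bar\T_\k$ (possibly a mixture over sign patterns, as in Lemma~\ref{kullback_leibler_divergence_heteroscedastic}), (iii) verify the averaged KL constraint using the $\chi^2$/Gaussian KL computations, and (iv) translate the conclusion $\max_j\bQ_j(\delta_H(\hat\pi,\pi_j)\ge s)\ge \frac{\sqrt M}{\sqrt M+1}(\tfrac34-\tfrac1{2\sqrt{\log M}})$ into a lower bound on $\esp[\delta_H]$ via Markov's inequality, $\esp_{\bQ_j}[\delta_H(\hat\pi,\pi_j)]\ge s\,\bQ_j(\delta_H(\hat\pi,\pi_j)\ge s)$.

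For \textbf{Case 1} ($n\ge 3$, $\k=\frac14(\frac{\log M_n}{n})^{1/2}$, homoscedastic), the natural family is exactly the one defining $M_n$: take $\pi_1,\dots,\pi_{M_n}$ to be a maximal $\tfrac14$-packing of the $\ell_2$-ball $B_{2,n}(2)$ in the metric $\delta_H$, together with $\pi_0=\id$ (which lies in that ball). By construction $\delta_H(\pi_i,\pi_j)\ge \tfrac14$ for $i\ne j$, so the separation condition holds with $s=\tfrac18$. For the statistical part, set all $\s_k=\s$ and choose $\btheta$ so that consecutive features are separated by exactly $\k\sqrt2\,\s$ along orthogonal directions, but the packing permutations only ever swap features that are already this close; then $\bar\k(\btheta,\bsigma)\ge\k$ so $\btheta\in\bar\T_\k$. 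Crucially, because each $\pi_j\in B_{2,n}(2)$ and the features differ only within the swapped coordinates, each Gaussian mean shift contributes $O(\k^2\s^2\cdot(\pi_j(k)-\id(k))^2)$ to the KL, and $\sum_k(\pi_j(k)-k)^2\le n\,\delta_2(\pi_j,\id)^2\le 4n$, giving $K(\bQ_j,\bQ_0)\le C\k^2 n = C\,\tfrac1{16}\log M_n$; choosing the geometry so that $C\le\tfrac12$ yields $\frac1{M_n}\sum_j K(\bQ_j,\bQ_0)\le\tfrac18\log M_n$. Lemma~\ref{Tsybakov2} then gives the lower bound $\tfrac18\cdot\frac{\sqrt{M_n}}{\sqrt{M_n}+1}(\tfrac34-\tfrac1{2\sqrt{\log M_n}})$, which for $M_n\ge 19$ (valid since $n\ge 3$ forces $M_n\ge$ the tabulated values, and one checks $M_3,M_4,\dots$ are large enough; even $M=2$ suffices with a slightly worse constant) exceeds $2.15\%$ after carefully tracking the numerical constant $s=1/8$.

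For \textbf{Case 2} ($n\ge 26$, $d\ge 24\log n$, $\k\le\tfrac18(d\log n)^{1/4}$, homoscedastic), the relevant family consists of a large set of transpositions — or more efficiently, of permutations supported on disjoint pairs — so that many of them are mutually far in $\delta_H$. Concretely, using $\lfloor n/2\rfloor$ disjoint transpositions $(1\,2),(3\,4),\dots$ and forming products, one obtains $2^{\lfloor n/2\rfloor}$ permutations whose pairwise Hamming distance is $\Omega(1)$ on a positive fraction (a Gilbert-Varshamov / Plotkin type argument on the hypercube gives $M\ge 2^{cn}$ permutations with $\delta_H\ge \tfrac14$, hence $s\asymp\tfrac18$); one could alternatively reuse the single-transposition family $\{\pi_{k,k'}\}$ as in Theorem~\ref{lowerbound2} when the Hamming separation of $2/n$ suffices, at the cost of a weaker $s$. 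The nuisance construction is the mixture $\prob_{\mu,\pi}$ of Lemma~\ref{kullback_leibler_divergence_heteroscedastic} with $\e_k=\sqrt{2/d}\,\k\s$: in the homoscedastic case $\gamma=1$, so that lemma's bound collapses to $K(\prob_{\mu,\pi},\prob_{\mu,\id})\le \frac{8\k^4}{d}(2+(1+2\k^2/d)^2)$, which is $O(\k^4/d)=O(\sqrt{\log n})=o(\log M)$ under $\k^4\le\frac1{8^4}d\log n$ and $M\ge 2^{cn}$ (or $M=n(n-1)/2$); and $\mu(\mathcal E\setminus\bar\T_\k)\le\binom n2 e^{-d/8}\le\binom n2 n^{-3}\to 0$ since $d\ge 24\log n$. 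Plugging into Lemma~\ref{Tsybakov2}, then using $\esp[\delta_H]\ge s\,\bQ(\delta_H\ge s)$ and subtracting the $\mu(\mathcal E\setminus\bar\T_\k)$ slack (as in the last display of the proof of Theorem~\ref{lowerbound2}), gives $\inf_{\hat\pi}\max_{\pi^*}\sup_{\btheta\in\bar\T_\k}\esp[\delta_H(\hat\pi,\pi^*)]>2.15\%$ once the constants $s$, the Tsybakov factor, and the exponentially small slack are all pinned down.

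The main obstacle is bookkeeping of absolute constants: Lemma~\ref{Tsybakov2} only yields a probability lower bound of the form $\tfrac34-o(1)$, which after multiplication by the separation radius $s$ (roughly $1/8$ in the regimes above) and subtraction of the mixture slack lands close to the target $2.15\%$, so the constants in the $\ell_2$-ball geometry (Case~1) and in the choice of $\k$ and of $M$ (Case~2) must be chosen tightly enough that the averaged KL stays below $\tfrac18\log M$ while $s$ stays as large as possible. In Case~1 there is the additional subtlety that $M_n$ is only known numerically for small $n$ and we need the argument to be valid for \emph{all} $n\ge 3$; the resolution is that $M_n$ is nondecreasing and $M_n\ge 2$ always (e.g.\ $\id$ and one transposition lie in $B_{2,n}(2)$), and the monotonicity of $M\mapsto\frac{\sqrt M}{\sqrt M+1}(\tfrac34-\tfrac1{2\sqrt{\log M}})$ lets us bound everything by its value at the worst (smallest) admissible $M$.
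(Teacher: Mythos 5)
Your proposal does not prove the statement it was asked to prove. The statement is Lemma~\ref{Tsybakov2} itself: a general minimax lower-bound tool asserting that if the permutations $\pi_0,\ldots,\pi_M$ are $2s$-separated in a (semi-)distance $\d$ and the averaged Kullback--Leibler divergences satisfy $\frac1M\sum_{j}K(\bQ_j,\bQ_0)\le\frac18\log M$, then $\max_j \bQ_j\big(\d(\tilde\pi,\pi_j)\ge s\big)\ge\frac{\sqrt M}{\sqrt M+1}\big(\frac34-\frac1{2\sqrt{\log M}}\big)$. In the paper this lemma is not proved at all; it is imported verbatim from \citet{Tsybakov2009} (Theorem~2.5). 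What you wrote instead is a sketch of the proof of Theorem~\ref{lowerbound4}, i.e.\ of the \emph{application} of the lemma (choice of the packing of $B_{2,n}(2)$ in Case~1, of the family of products of disjoint transpositions in Case~2, verification of the KL budget, and the Markov step $\esp[\delta_H]\ge s\,\bQ(\delta_H\ge s)$). However well or badly those constants are tracked, none of it establishes the lemma: nowhere do you carry out the reduction from the $\d$-ball exclusion to a multiple-testing problem (via the triangle inequality, the minimum-distance selector $\psi=\arg\min_j\d(\tilde\pi,\pi_j)$ satisfies $\{\psi\neq j\}\subset\{\d(\tilde\pi,\pi_j)\ge s\}$ precisely because of the $2s$-separation), nor do you prove the KL-based lower bound on the minimax error of testing $M+1$ hypotheses that produces the explicit factor $\frac{\sqrt M}{\sqrt M+1}\big(\frac34-\frac1{2\sqrt{\log M}}\big)$. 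Those two steps \emph{are} the content of the statement, and they are entirely absent.

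As a secondary remark, even read as a blind proof of Theorem~\ref{lowerbound4} your sketch is looser than the paper's argument and has a quantitative slip in Case~2: the hypotheses there are products of up to $n/2$ disjoint transpositions (the paper's Lemma~\ref{existence2} gives $M\ge(n/24)^{n/6}$ of them with $\delta_H\ge 3/8$), so the per-transposition bound of Lemma~\ref{kullback_leibler_divergence_heteroscedastic} must be multiplied by $n/2$, giving $K\le 16n\k^4/d$, not $O(\k^4/d)$ as you state; the budget check then rests on $\log M\ge n(\log n-\log 24)/6$, which is why the paper needs $n\ge 26$. But this is beside the main point: the statement to be proved was the Tsybakov-type lemma, and your proposal leaves it unproved.
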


We now have to choose $M$ and $\pi_0,\ldots,\pi_M$ in a suitable manner, which will be done differently according to the relationship between
$n$ and $d$.

\paragraph{\textbf{Case 1:} We assume that $\k\le \frac14\sqrt{\frac{\log M_n}{n}}$}
Let $M = M_n$ with $M_n= \mathcal M(1/4, B_{2,n}(2),\delta_H)$ and let $\btheta = (\theta_1,\ldots,\theta_n)$ be the set of vectors
$\theta_k = k\k\s(1,0,\ldots,0)\in\RR^d$. Clearly, $\btheta$ belongs to $\bar\T_\k$. By definition of the packing number, there exist $\pi_1,\ldots,\pi_{M_n}$, permutations from $\Sn$,
such that
$$
\delta_2(\pi_j,\id)\le 2,\qquad \delta_H(\pi_i,\pi_j)\ge \frac14;\qquad\forall i,j\in\{1,\ldots,M_n\}, \ i\not=j.
$$
Defining $\bQ_j=\prob_{\btheta,\pi_j}$ for $j=1,\ldots,M_n$ and $\bQ_0=\prob_{\btheta,\id}$ we get
\begin{align*}
K(\bQ_j,\bQ_0)
	&= \frac1{2\s^2} \sum_{k=1}^n \|\theta_{\pi_j(k)}-\theta_{k}\|^2 = \frac{\k^2}{2}\sum_{k=1}^n (\pi_j(k)-k)^2\\
	&= \frac{n\k^2}{2}\;\delta_2(\pi_j,\id)^2\le 2n\k^2.
\end{align*}
Therefore, using Lemma~\ref{Tsybakov2} with $s=1/8$ we infer from $\k\le \frac14\big(\frac{\log M_n}{n}\big)^{1/2}$ that
$$
\min_{\hat\pi}\max_{j=0,\ldots,M_n} \prob_{\btheta,\pi_j}(\delta_H(\hat\pi,\pi_j)\ge 1/8) \ge \frac{\sqrt{3}}{\sqrt{3}+1}\Big(\frac34-\frac1{2\sqrt{\log 3}}\Big) \approx 17.31\%.
$$
As a consequence, we obtain that
\begin{align*}
\min_{\hat\pi}\max_{(\pi,\btheta)\in\Sn\times\bar\T_\k} \esp_{\btheta,\pi}[\delta_H(\hat\pi,\pi)]
    &\ge \min_{\hat\pi}\max_{j=0,\ldots,M} \esp_{\btheta,\pi}[\delta_H(\hat\pi,\pi)\fcar_{\{\delta_H(\hat\pi,\pi)\ge 1/8\}}]\\
    &\ge \frac18 \min_{\hat\pi}\max_{j=0,\ldots,M} \esp_{\btheta,\pi}[\fcar_{\{\delta_H(\hat\pi,\pi)\ge 1/8\}}]\\
    &\ge 2.15\%.
\end{align*}
This completes the proof of the first case.

\paragraph{\textbf{Case 2:} We assume that $d\ge 24\log n$ and $\k \le \frac18 (d\log n)^{1/4}$ }
Let $\mu$ be the uniform distribution on $ \{\pm\e\}^{m\times d}$ with $\e={\sqrt{2/d}\;\s\k}$, as in
Lemma~\ref{kullback_leibler_divergence_heteroscedastic}. For any set of permutations $\{\pi_0,\ldots,\pi_M\}\subset\Sn$,
in view of Markov's inequality,
\begin{align*}
\sup_{(\pi,\btheta)\in\Sn\times\bar\T_\k} \esp_{\btheta,\pi} \big[\delta_H(\hat\pi,\pi)\big] &\ge
\frac{3}{16} \Big( \max_{i=0,\ldots,M} \prob_{\mu,\pi_i} \Big(\delta_H(\hat\pi,\pi_i) \ge \frac{3}{16} \Big) - \mu(\bar\T_\k^\complement)\Big).
\end{align*}
We choose $M$ and $\pi_0,\ldots,\pi_M$ as in the following lemma.
\begin{lemma}\label{existence2}
For any integer $n\ge 4$ there exist permutations $\pi_0,\ldots,\pi_M\in\Sn$ such that
$$
\pi_0=\id,\qquad\ M\ge(n/24)^{n/6},
$$
each $\pi_i$  is a composition of at most $n/2$ transpositions with disjoint supports,
and for every distinct pair of indices $i,j\in\{0,\ldots,M\}$ we have
\begin{equation*}
\delta_H(\pi_i, \pi_j) \ge {3}/{8}.
\end{equation*}
\end{lemma}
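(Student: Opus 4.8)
The plan is to build the family by a Gilbert--Varshamov type packing argument inside the set of fixed‑point‑free involutions. Assume first that $n$ is even and let $\mathcal M\subset\Sn$ be the collection of perfect matchings on $\{1,\ldots,n\}$, that is, the involutions that are products of exactly $n/2$ disjoint transpositions; the constraint ``at most $n/2$ transpositions'' is then automatic, and since every such permutation moves all $n$ points we get $\delta_H(m,\id)=1\ge 3/8$ for every $m\in\mathcal M$. The key elementary fact is that for $m,m'\in\mathcal M$ the set of indices $k$ with $m(k)=m'(k)$ is exactly the union of the edges common to $m$ and $m'$, so that $n\,\delta_H(m,m')=n-2|m\cap m'|$; hence $\delta_H(m,m')\ge 3/8$ is equivalent to $|m\cap m'|\le 5n/16$.

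The next step is to bound the number of matchings that are ``too close'' to a fixed one. For $m_0\in\mathcal M$ and an integer $j$, sending a matching $m$ with $|m\cap m_0|\ge j$ to a distinguished (say, lexicographically least) $j$‑element subset $J$ of $m\cap m_0$ together with the matching induced by $m$ on the $n-2j$ remaining vertices is injective, so
\[
\#\{m\in\mathcal M:\ |m\cap m_0|\ge j\}\ \le\ \binom{n/2}{j}\,(n-2j-1)!!\,.
\]
Taking $j=\lfloor 5n/16\rfloor+1$, the graph on $\mathcal M$ joining $m$ to $m'$ whenever $|m\cap m'|>5n/16$ has maximum degree $\Delta\le\binom{n/2}{j}(n-2j-1)!!$, and a greedy selection of vertices (delete a chosen vertex together with its neighbours at each step) yields an independent set $\{\pi_1,\ldots,\pi_M\}$ with $M\ge |\mathcal M|\big/(1+\Delta)=(n-1)!!\big/\big(1+\binom{n/2}{j}(n-2j-1)!!\big)$. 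Setting $\pi_0=\id$ gives permutations that are pairwise at $\delta_H$‑distance at least $3/8$, each a product of at most $n/2$ disjoint transpositions, as required. For odd $n$ one repeats the argument with involutions consisting of $(n-1)/2$ disjoint transpositions (fixing a designated point, or summing over its choice), which still move $n-1\ge (3/8)n$ points.

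It remains to verify the explicit inequality $M\ge(n/24)^{n/6}$. Using $(n-1)!!\ge(n/e^2)^{n/2}$, $\binom{n/2}{j}\le 2^{n/2}$ and $(n-2j-1)!!<(n-2j)^{(n-2j)/2}$ with $n-2j$ close to $3n/8$, the lower bound on $M$ is of order $n^{(5/16)n}$ up to a factor $c^n$; since $5/16>1/6$, the surplus $n^{(5/16-1/6)n}=n^{(7/48)n}$ absorbs that factor once $n$ is moderately large, while for $n\le 24$ the target $(n/24)^{n/6}$ is at most $1$ and the statement is trivial. The main obstacle is precisely this last verification: the asymptotics leave a comfortable margin, but covering \emph{every} $n\ge 4$ forces one to replace the crude estimates above by sharper Stirling bounds (which is presumably why the statement carries the loose constants $24$ and $1/6$ rather than sharp ones) and to attend to the parity bookkeeping with a little care.
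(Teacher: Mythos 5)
Your route is genuinely different from the paper's. The paper first constructs a Hamming packing of $\mathfrak{S}_m$, $m=\lfloor n/2\rfloor$, at distance $\ge 1/2$ and of cardinality at least $(m/8)^{m/2}$ (Lemma~\ref{existence}, proved by greedy selection plus a derangement count of $\#E_{\id}^{\complement}$), and then lifts each $\tilde\pi\in\mathfrak{S}_m$ to $(1\ 2\tilde\pi(1))\circ\cdots\circ(2m-1\ 2\tilde\pi(m))\in\Sn$; this embedding exactly doubles the number of disagreements, giving distance $\ge m/n\ge 3/8$, and $(n/24)^{n/6}$ follows from $m\ge n/3$. You instead run a Gilbert--Varshamov packing directly inside the set of perfect matchings, using $n\,\delta_H(m,m')=n-2|m\cap m'|$ and the injection $m\mapsto(J,\,m\ \text{restricted off }V(J))$ to bound the number of near neighbours, then a greedy independent-set extraction. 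Those structural steps are correct, and the approach is arguably cleaner in that the transposition structure and the distance to $\id$ come for free; your reduction of $n\le 24$ to the trivial case $M=1$ is also fine.

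The gap is exactly where the lemma has content: the verification that $M\ge (n-1)!!\big/\big(1+\binom{n/2}{j}(n-2j-1)!!\big)$ with $j=\lfloor 5n/16\rfloor+1$ really exceeds $(n/24)^{n/6}$ for \emph{every} $n$, which you defer rather than prove. Worse, the crude estimates you cite do not suffice in the range that matters: with $(n-1)!!\ge(n/e^2)^{n/2}$, $\binom{n/2}{j}\le 2^{n/2}$ and $(n-2j-1)!!\le(3n/8)^{3n/16}$, taking logarithms gives a guaranteed exponent of roughly $n\big(\tfrac{5}{16}\ln n-1.16\big)$ against the required $n\big(\tfrac16\ln n-0.53\big)$, and the difference $n\big(\tfrac{7}{48}\ln n-0.63\big)$ is negative for all $n$ up to about $76$. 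So, as written, the bound is not established for roughly $26\le n\le 76$, which overlaps precisely with the regime $n\ge 26$ in which Lemma~\ref{existence2} is invoked in the proof of Theorem~\ref{lowerbound4}. The route can be closed --- replacing $2^{n/2}$ by an entropy-type binomial bound and using two-sided Stirling-type bounds for the double factorials leaves a positive margin for all $n$, and the odd-$n$ bookkeeping (a common fixed point contributes one extra agreement) is routine --- but these non-asymptotic estimates and the parity case are exactly what is missing, so the decisive inequality is asserted rather than proved.
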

As $\pi_i$ is a product of transpositions, the Kullback-Leibler divergence between $\prob_{\mu,\pi_i}$ and $\prob_{\mu,\pi_0}$ can be computed by independence thanks to Lemma~\ref{kullback_leibler_divergence_heteroscedastic}:
\begin{equation*}
\frac1M\sum_{i=1}^M K(\prob_{\mu,\pi_i},\prob_{\mu,\pi_0}) \le \frac{n}{2} \times  \frac{8\k^4}{d}\Big(2+\Big[1+\frac{2\k^2}{d}\Big]^2\Big) = \frac{16n\k^4}{d},
\end{equation*}
where the last inequality follows from the bound $\k\le 0.45 d^{1/2}$. For $n\ge 26$, it holds that $M\ge 2$ and
$$
\log M\ge \frac{n(\log n -\log 24)}{6}\ge \frac{\log n}{512}.
$$
Consequently, $\frac{16n\k^4}{d}\le \frac18\log M$ which allows us to apply Lemma~\ref{Tsybakov2}. This yields
\begin{align*}
\inf_{\hat\pi} \sup_{(\pi,\btheta)\in\Sn\times\bar\T_\k} \esp_{\btheta,\pi} [\delta_H(\hat\pi,\pi)]
&\ge \frac{3}{16}\Big[\frac{\sqrt{2}}{\sqrt{2}+1} \Big(\frac{3}{4}-\frac{1}{2\sqrt{\log 2}} \Big) - \frac{n^2}{2}e^{-d/8}\Big]\\
&\ge \frac{3}{16}\Big[0.077-\frac{n^2}{2}e^{-24\log(n)/8}\Big]\\
&\ge \frac{3}{16}\Big[0.077-\frac{1}{2n}\Big]\ge 5.81\%.
\end{align*}
\vspace{-28pt}

~
\end{proofof}


\section{Proofs of the Lemmas}\label{section:lemmas}

\begin{proofof}[Proof of Lemma~\ref{construction_btheta}]
Let us denote $r_\sigma =\max_{1\le\ell\le n} \sigma_\ell/\min_{1\le\ell\le n} \sigma_\ell$.  It suffices to set $\theta_1=0\in\RR^d$,
\begin{align*}
\theta_{2k+1}&=\kappa\Big(\s_{1,2}+\ldots+\s_{2k-1,2k}+k\big(1+r_\s\big),0,\ldots,0\Big)\in\RR^d,\\
\theta_{2k}&=\kappa\Big(\s_{1,2}+\ldots+\s_{2k-1,2k}+(k-1)\big(1+r_\s\big),0,\ldots,0\Big)\in\RR^d
\end{align*}
for all $k=1,\ldots,m-1$. If $n$ is impair, one can set $\theta_n=\theta_{n-1}+\k \big(1+r_\s\big) (1,0,\ldots,0)$.
One readily checks that these vectors satisfy the desired conditions.
\end{proofof}

\begin{proofof}[Proof of Lemma~\ref{kullback_leibler_divergence_heteroscedastic}]
Without loss of generality, we assume hereafter that $\pi\in\Sn$ is the transposition permuting $1$ and $2$.
Recall that the uniform distribution on $\{\pm\e_1\}^d\times\ldots\times\{\pm\e_{n}\}^d$ can also be written as the product
$\mu = \bigotimes_{\ell=1}^n\mu_\ell$, where $\mu_\ell$ is the uniform distribution on $\{\pm\e_{\ell}\}^d$.
Let us introduce an auxiliary probability distribution $\tilde\mu$ on $\RR^{n\times d}$ defined as $\tilde\mu = \delta_{\mathbf 0}\otimes\delta_{\mathbf 0}
\otimes \mu_2\otimes\ldots\otimes\mu_m$ with $\delta_{\mathbf 0}$ being the Dirac delta measure at $\mathbf 0\in\RR^d$.
 We set $\prob_{\tilde\mu,\id}(\cdot)= \int_{\Theta} \prob_{\btheta,\id}(\cdot)\tilde\mu(d\btheta)$.

We first compute the density of $\prob_{\mu,\pi}$ w.r.t.\ $\prob_{\mu,\id}$, which can be written as
\begin{align*}
\frac{d\prob_{\mu,\pi}}{d\prob_{\mu,\id}}(\bX,\bXdiese) = \frac{d\prob_{\mu,\pi}}{d\prob_{\tilde\mu,\id}}(\bX,\bXdiese) \bigg/ \bigg(\frac{d\prob_{\mu,\id}}{d\prob_{\tilde\mu,\id}}(\bX,\bXdiese)\bigg),\quad  \bX,\bXdiese\in \RR^{n\times d}.
\end{align*}
For every $\theta_i\in\RR^d$ we denote by $\prob_{\theta_i,\s_i}$ the probability distribution of $X_i$ from (\ref{model}), given by
\begin{equation*}
\frac{d\prob_{\t_i,\s_i}}{d\prob_{0,\s_j}}(x) = \exp\Big\{-\frac{\|\t_i\|^2}{2\s_i^2} + \frac{1}{\s_i^2}(x,\t_i)-\frac{\|x\|^2}{2}(\sigma_i^{-2}-\s_j^{-2})\Big\},\quad \forall x\in\RR^d.
\end{equation*}
With this notation, we have
\begin{align*}
&\frac{d\prob_{\mu,\pi}}{d\prob_{\tilde\mu,\id}}(\bX,\bXdiese) = \esp_{\mu} \Bigg[\frac{d\prob_{\t_1,\s_1}}{d\prob_{0,\s_1}}(X_1)\frac{d\prob_{\t_2,\s_2}}{d\prob_{0,\s_2}}(X_2)\frac{d\prob_{\t_2,\s_2}}{d\prob_{0,\s_1}}(\Xdiese_1)\frac{d\prob_{\t_1,\s_1}}{d\prob_{0,\s_2}}(\Xdiese_2)\Bigg] \\
&= \esp_{\mu} \Bigg[\frac{d\prob_{\t_1,\s_1}}{d\prob_{0,\s_1}}(X_1)\frac{d\prob_{\t_1,\s_1}}{d\prob_{0,\s_2}}(\Xdiese_2)\Bigg] \times \esp_{\mu}\Bigg[\frac{d\prob_{\t_2,\s_2}}{d\prob_{0,\s_2}}(X_2)\frac{d\prob_{\t_2,\s_2}}{d\prob_{0,\s_1}}(\Xdiese_1)\Bigg]\\
&= \prod_{k=1}^d\cosh\Big(\frac{\e_1}{\s_1^2}(X_{1,k}+\Xdiese_{2,k})\Big)\cosh\Big(\frac{\e_2}{\s_2^2}(X_{2,k}+\Xdiese_{1,k})\Big)\\
&\qquad\times\exp\Big\{-\frac12(\|\Xdiese_{1}\|^2-\|\Xdiese_{2}\|^2)(\s_2^{-2}-\s_1^{-2})\Big\}.
\end{align*}
Similarly,
\begin{align*}
\frac{d\prob_{\mu,\id}}{d\prob_{\tilde\mu,\id}}&(\bX,\bXdiese) =\esp_{\mu} \Bigg[\frac{d\prob_{\t_1,\s_1}}{d\prob_{0,\s_1}}(X_1)\frac{d\prob_{\t_2,\s_2}}{d\prob_{0,\s_2}}(X_2)\frac{d\prob_{\t_1,\s_1}}{d\prob_{0,\s_1}}(\Xdiese_1)
\frac{d\prob_{\t_2,\s_2}}{d\prob_{0,\s_2}}(\Xdiese_2)\Bigg] \\
&= \esp_{\mu} \Bigg[\frac{d\prob_{\t_1,\s_1}}{d\prob_{0,\s_1}}(X_1)\frac{d\prob_{\t_1,\s_1}}{d\prob_{0,\s_1}}(\Xdiese_1)\Bigg] \times \esp_{\mu}\Bigg[\frac{d\prob_{\t_2,\s_2}}{d\prob_{0,\s_2}}(X_2)\frac{d\prob_{\t_2,\s_2}}{d\prob_{0,\s_2}}(\Xdiese_2)\Bigg]\\
&= \prod_{k=1}^d\cosh\Big(\frac{\e_1}{\s_1^2}(X_{1,k}+\Xdiese_{1,k})\Big)\cosh\Big(\frac{\e_2}{\s_2^2}(X_{2,k}+\Xdiese_{2,k})\Big).
\end{align*}
Thus, we get that
\begin{align*}
\frac{d\prob_{\mu,\pi}}{d\prob_{\mu,\id}}(\bX,\bXdiese) &= \prod_{k=1}^d \frac{\cosh\big(\frac{\e_1}{\s_1^2}(X_{1,k}+\Xdiese_{2,k})\big)}{\cosh\big(\frac{\e_1}{\s^2_1}(X_{1,k}+\Xdiese_{1,k})\big)}
\times \prod_{k=1}^d \frac{\cosh\big(\frac{\e_2}{\s_2^2}(X_{2,k}+\Xdiese_{1,k})\big)}{\cosh\big(\frac{\e_2}{\s^2_2}(X_{2,k}+\Xdiese_{2,k})\big)}\\
&\qquad\times\exp\Big\{-\frac12(\|\Xdiese_{1}\|^2-\|\Xdiese_{2}\|^2)(\s_2^{-2}-\s_1^{-2})\Big\}.
\end{align*}
Then, we compute the Kullback-Leibler divergence,
\begin{align}
K&(\prob_{\mu,\pi},\prob_{\mu,\id})
	 = \int \log \Big(\frac{d\prob_{\mu,\pi}}{d\prob_{\mu,\id}}(\bX,\bXdiese)\Big) d\prob_{\mu,\pi}(\bX,\bXdiese)\nonumber\\
	& =  \sum_{k=1}^d\sum_{j=1}^2 \bigg\{\esp_{\mu} \bigg[ \int \log\cosh\Big[ \frac{\e_j}{\s^2_j}(2\t_{j,k}+\s_j\sqrt{2}x)\Big] \varphi(x)dx \bigg] \nonumber\\
	& \qquad -  \esp_{\mu}\bigg[ \int \log\cosh \Big[\frac{\e_j}{\s^2_j}(\t_{1,k}+\t_{2,k}+\s_{12}x)\Big]\varphi(x) dx \bigg]\bigg\}\nonumber\\
	&\qquad +\frac{d}2\esp_{\mu}\bigg[ \int_\RR ((\t_{1,1}+\s_1 x)^2-(\t_{2,1}+\s_2 x)^2)\varphi(x)\,dx\bigg](\s_2^{-2}-\s_1^{-2}),\nonumber
\end{align}
where $\varphi$ is the density function of the standard Gaussian distribution. We evaluate the first two terms of the last display using the following inequalities:
\begin{equation}\label{cosh}
\forall u\in\RR,\qquad
\frac{u^2}{2}-\frac{u^4}{12} \le \log\cosh(u) \le \frac{u^2}{2},
\end{equation}
while for the third term the exact computation yields:
\begin{align*}
\esp_{\mu}\bigg[ \int_\RR ((\t_{1,1}+\s_1 x)^2-(\t_{2,1}+\s_2 x)^2)\varphi(x)\,dx\bigg]&=
\e_1^2+\s_1^2-\e_2^2-\s_2^2\\
&= (\s_1^2-\s_2^2)(1+(2/d)\k^2).
\end{align*}
In conjunction with the facts that $\e_1/\s_1=\e_2/\s_2$, $\s_1\le\s_2$ and $\e_1\le\e_2$, this leads to
\begin{align*}
&\esp_{\mu} \bigg[ \int \log\cosh\Big[ \frac{\e_j}{\s_j^2}(2\t_{j,k}+\s_j\sqrt{2}x)\Big] \varphi(x)dx \le
\frac{\e_j^2}{\s_j^2} + 2\frac{\e_j^4}{\s_j^4}=\frac{\e_2^2}{\s_2^2} + 2\frac{\e_2^4}{\s_2^4},\\
&\esp_{\mu}\bigg[ \int \log\cosh \Big[\frac{\e_j}{\s_j^2}(\t_{1,k}+\t_{2,k}+\s_{1,2}x)\Big]\varphi(x) dx \bigg]\\
&\qquad\ge\frac{\e^2_j}{2\s^4_j}\Big(\e_1^2+\e_2^2+\s_1^2+\s_2^2\Big)\\
&\qquad\qquad- \frac{\e^4_j}{12\s^8_j}\Big(\e_1^4+\e_2^4+3(\s_1^2+\s_2^2)^2+6\e_1^2\e_2^2+6(\s_1^2+\s_2^2)(\e_1^2+\e_2^2)\Big)\\
&\qquad\ge\frac{\e^2_2(\e_1^2+\s_1^2)}{\s^4_2}- \frac{\e^4_1(\e_2^2+\s_2^2)^2}{\s^8_1}.
\end{align*}
Thus, we get that
\begin{align*}
(1/d)K(\prob_{\mu,\pi},\prob_{\mu,\id}) &\le \frac{2\e_2^2}{\s_2^2} + \frac{4\e_2^4}{\s_2^4}-\frac{2\e^2_2(\e_1^2+\s_1^2)}{\s^4_2}+\frac{2\e^4_1(\e_2^2+\s_2^2)^2}{\s^8_1}\\
&\qquad+ \frac12\big(1+(2/d)\k^2)(\s_1^2-\s_2^2)(\s_2^{-2}-\s_1^{-2})\\
&\le \frac{4\k^2}{d}\Big(1-\frac{\s_1^2}{\s_2^2}\Big)+\frac{16\k^4}{d^2}+\frac{8\k^4}{d^2}\big(1+(2/d)\k^2\big)^2\frac{\s_2^4}{\s_1^4}\\
&\qquad+ \frac12\big(1+(2/d)\k^2\big)\Big(\frac{\s_2^2}{\s_1^2}-1\Big)^2.
\end{align*}
To complete the proof, we need to evaluate $\mu(\mathcal E\setminus\bar\T_\k)$. We note that in view of the union bound,
\begin{align*}
\mu(\mathcal E\setminus\bar\T_\k) & = \mu\Big(\bigcup_{k=1}^n\bigcup_{k'\not=k}\{\btheta : \|\theta_{k}-\theta_{k'}\| < \k\s_{k,k'} \}\Big)\\
&\le \frac{n(n-1)}{2} \max_{k\not=k'}\mu\big(\{\btheta : \|\theta_{k}-\theta_{k'}\|^2 < \k^2\s_{k,k'}\}\big)\\
&= \frac{n(n-1)}{2}  \max_{k\not=k'}\prob\big(d\e_{k}^2+d\e_{k'}^2-2d\e_{k}\e_{k'}\bar\zeta < \k^2\s_{k,k'}^2 \big),
\end{align*}
where $\bar\zeta=\frac1d\sum_{j=1}^d \zeta_j$ with $\zeta_1,\ldots,\zeta_d$ being i.i.d.\ Rademacher random variables (\textit{i.e.}, random variables taking the values $+1$ and $-1$
with probability $1/2$). One easily checks that
$$
\frac{d\e_{k}^2+d\e_{k'}^2-\k^2\s_{k,k'}^2}{2d\e_{k}\e_{k'}} = \frac{2\s_{k}^2+2\s_{k'}^2-(\s_{k}^2+\s_{k'}^2)}{4\s_{k}\s_{k'}}\ge \frac12.
$$
Therefore, using the Hoeffding inequality, we get $\mu(\mathcal E\setminus\bar\T_\k)  \le \frac12n(n-1) \prob\big(\bar\zeta > 1/2 \big)\le \frac12n(n-1) e^{-d/8}$.
\end{proofof}


\begin{proofof}[Proof of Lemma~\ref{existence2}]
We first prove an auxiliary result.

\begin{lemma}\label{existence}
For any integer $n\ge 2$ there exist permutations $\pi_0,\pi_1,\ldots,\pi_M$ in $\Sn$ such that
$$
\pi_0=\id,\qquad\ M \ge (n/8)^{n/2}
$$
and for any pair $i,j\in\{0,\ldots,M\}$ of distinct indices we have
$\delta_H(\pi_i, \pi_j) \ge \frac{1}{2}$.
\end{lemma}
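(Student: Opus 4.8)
The plan is to prove Lemma~\ref{existence} by a Gilbert--Varshamov-type greedy packing argument in $\Sn$ equipped with the Hamming metric $\delta_H$. The single fact that makes everything go through is that $\delta_H$ is invariant under left translation: for all $\sigma,\pi,\pi'\in\Sn$ one has $\delta_H(\sigma\pi,\sigma\pi')=\delta_H(\pi,\pi')$, since $\{k:\sigma\pi(k)\neq\sigma\pi'(k)\}=\{k:\pi(k)\neq\pi'(k)\}$ by injectivity of $\sigma$. In particular every $\delta_H$-ball has the same cardinality as the ball centred at $\id$.

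First I would let $\{\pi_0,\ldots,\pi_M\}\subset\Sn$ be an inclusion-maximal family with $\delta_H(\pi_i,\pi_j)\ge 1/2$ for every pair $i\neq j$; replacing each $\pi_i$ by $\pi_0^{-1}\pi_i$ and using left invariance, I may assume $\pi_0=\id$. By maximality, no permutation lies at $\delta_H$-distance $\ge 1/2$ from all of $\pi_0,\ldots,\pi_M$ (adding it would contradict maximality, and it cannot coincide with any $\pi_i$ since then the distance would be $0$), so the sets $B_i=\{\sigma\in\Sn:\delta_H(\sigma,\pi_i)<1/2\}$ cover $\Sn$. Hence $n!\le (M+1)\,b_n$, where $b_n=|B_0|$ is the number of permutations that agree with $\id$ at strictly more than $n/2$ coordinates, i.e.\ have more than $n/2$ fixed points.

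The second step is to bound $b_n$ from above. Such a permutation $\sigma$ has at least $\lceil n/2\rceil$ fixed points; choosing a subset $F$ of exactly $\lceil n/2\rceil$ of them and noting that $\sigma(F)=F$, so that $\sigma$ restricted to the complement $F^{c}$ (of size $\lfloor n/2\rfloor$) is an arbitrary bijection of $F^{c}$, I get the crude bound $b_n\le\binom{n}{\lceil n/2\rceil}\,\lfloor n/2\rfloor!=n!/\lceil n/2\rceil!$. Combined with the covering inequality this gives $M+1\ge\lceil n/2\rceil!$.

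Finally I would convert this into the stated bound using Stirling's inequality $k!\ge(k/e)^{k}$: for $n\ge 6$ the base $n/(2e)$ exceeds $1$, so $\lceil n/2\rceil!\ge(\lceil n/2\rceil/e)^{\lceil n/2\rceil}\ge(n/(2e))^{n/2}$, whence $M\ge\lceil n/2\rceil!-1\ge\tfrac12\lceil n/2\rceil!\ge\tfrac12(4/e)^{n/2}(n/8)^{n/2}\ge(n/8)^{n/2}$, since $8/(2e)=4/e>1$ makes $\tfrac12(4/e)^{n/2}\ge 1$ for $n\ge 4$. The remaining values $n\in\{2,3,4,5\}$ are settled by hand: there $(n/8)^{n/2}\le 1$ and the pair $\{\id,(1\,2\,\cdots\,n)\}$ already satisfies $\delta_H=1\ge 1/2$, so $M\ge 1$ suffices. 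I do not expect any genuine obstacle; the only point requiring a little care is checking that the deliberate overcount in the bound on $b_n$ still leaves room for the exponential lower bound on $M$, which the factor $4/e>1$ provides comfortably.
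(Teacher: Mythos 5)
Your proof is correct, and while it shares the paper's overall Gilbert--Varshamov skeleton (a packing argument in $(\Sn,\delta_H)$ exploiting translation invariance, so that all balls have the cardinality of the ball at $\id$), the key counting step is genuinely different and simpler. The paper builds the family greedily and controls the process through an auxiliary lemma that evaluates $\#E_{\id}$, the number of permutations at distance at least $1/2$ from $\id$, via an exact expansion in derangement numbers $!\ell=\ell!\sum_j(-1)^j/j!$ and alternating-series estimates, yielding $\#E_{\id}^\complement\le 4\,n!/m!$ with $m=\lceil n/2\rceil$ and hence roughly $M\ge m!/4$. You instead take a maximal $1/2$-separated family (equivalent to the greedy construction) and bound the radius-$1/2$ ball directly: a permutation in it has more than $n/2$ fixed points, so choosing $\lceil n/2\rceil$ of them and noting the restriction to the complement is a bijection gives $b_n\le\binom{n}{\lceil n/2\rceil}\lfloor n/2\rfloor!=n!/\lceil n/2\rceil!$, whence $M+1\ge\lceil n/2\rceil!$. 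This avoids the derangement lemma entirely and even gives a marginally better constant; the remaining Stirling step and the separate treatment of small $n$ (you exhibit the $n$-cycle for $n\le 5$, the paper argues trivially for $n\le 8$) are routine in both versions. In short, your argument is a valid, more elementary substitute for the paper's Lemma on $\#E_{\id}$, at no loss in the final bound $M\ge(n/8)^{n/2}$.
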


\begin{proof}
When $n\le 8$, the claim of this lemma is trivial since one can always find at least one permutation that
differs from the identity at all the positions and thus $M\ge 1\ge (n/8)^{n/2}$. Let us consider the case
$n>8$. For every $\pi\in\Sn$, denote
\begin{equation*}
E_\pi \triangleq \Big\{ \pi'\in\Sn \suchthat \frac1n \sum_{i=1}^n \fcar_{\{\pi(i)\neq \pi'(i)\}} \ge 1/2 \Big\}.
\end{equation*}
We first notice that for every $\pi\in\Sn$, there is a one-to-one correspondence between $E_{id}$ and $E_\pi$ through the bijection
\begin{equation*}
\fonction{\phi}{E_{id}}{E_\pi\phantom{\Big()}}{\pi'}{\pi\circ\pi'},
\end{equation*}
so that $\# E_\pi = \# E_{id}$. The following lemma, proved later on in this section, gives a bound for this number.
\begin{lemma}\label{cardinalEid}
Let $n\ge 2$ be an integer and $m$ be the smallest integer such that $2m\ge n$.
Then
\begin{equation*}
\# E_{id}^\complement \le \frac{4n!}{m!}.
\end{equation*}
\end{lemma}
Now we denote $\pi_0=id$ and choose $\pi_1$ in $E_{id}$. Then, it is sufficient to choose $\pi_2$ as any element from $E_{id}\cap E_{\pi_1}$,
the latter set being nonempty since
\begin{align*}
\#\big(E_{\pi_0}\cap E_{\pi_1}\big) &\ge \# \Sn - \# E_{\pi_0}^\complement - \# E_{\pi_1}^\complement \\
&\ge n!\times\Big(1-\frac{8}{m!}\Big) > 0.
\end{align*}
We can continue the construction until $\pi_i$ if
\begin{equation*}
1-\frac{4i}{m!} > 0\qquad \ssi\qquad i < \frac{m!}{4}.
\end{equation*}
To conclude, we observe that
\begin{equation*}
\frac{m!}{4} > \frac14 \Big( \frac{n}{2e} \Big)^{n/2} \ge \Big( \frac{n}{8} \Big)^{n/2}.
\end{equation*}
\vspace{-20pt}

~
\end{proof}

Let us denote by $m$ the largest integer such that $2m\le n$, and choose
$$
\tilde\pi_0,\tilde\pi_1,\ldots,\tilde\pi_M\in\mathfrak{S}_m \quad \text{ with } \quad M\ge(m/8)^{m/2}
$$
as in Lemma~\ref{existence}, so that for every $i\neq j\in\{0,\ldots,M\}$,
$\delta_H(\tilde\pi_i,\tilde\pi_j) \ge \frac{1}{2}$.
We use each permutation $\tilde\pi_i\in\mathfrak{S}_m$ to construct a permutation $\pi_i\in\Sn$.
The idea of the construction is as follows: the permutation $\pi_i$ is a product of $m$ transpositions of
distinct supports, and each transposition permutes an even integer with an odd one.
We set $\pi_0=id$ and for every $i$ in $\{1,\ldots,M\}$,
\begin{equation*}
\pi_i = \big(1 \ 2\tilde\pi_i(1)\big) \circ \big(3 \ 2\tilde\pi_i(2)\big) \circ \ldots \circ \big(2m-1 \ 2\tilde\pi_i(m)\big) \in \Sn.
\end{equation*}
With these choices, the number of differences between $\pi_i$ and $\pi_j$ is exactly twice as much as the number of differences
between $\tilde\pi_i$ and $\tilde\pi_j$. To sum up, for every pair of distinct indices $i,j\in\{0,\ldots,M\}$,
\begin{equation*}
\frac1n \sum_{k=1}^n \fcar_{\{\pi_i(k)\neq \pi_j(k)\}} = \frac{2m}{n} \times \frac1m \sum_{k=1}^m \fcar_{\{\tilde\pi_i(k)\neq \tilde\pi_j(k)\}} \ge \frac{m}{n}\ge\frac{3}{8},\qquad \forall n\ge 4.
\end{equation*}
To complete the proof, we note that $m\ge n/3$.
\end{proofof}

\begin{proofof}[Proof of Lemma~\ref{cardinalEid}]
For every $\ell\in\{m,\ldots,n\}$, counting all the permutations $\pi$ such that $\sum_{k=1}^n \fcar_{\{\pi(k)\neq k\}}=\ell$, we get
\begin{equation*}
\# E_{id} = !n + !(n-1)\binom{n}{1}  + \ldots + !(n-m)\binom{n}{m} ,
\end{equation*}
where $!\ell$ is the number of derangements, the permutations such that none of the elements appear in their original position, in $\mathfrak{S}_\ell$
for $\ell\ge 1$. We know that
\begin{equation*}
\forall \ \ell\ge1, \qquad !\ell = \ell! \times \sum_{j=0}^\ell \frac{(-1)^j}{j!},
\end{equation*}
which, using the alternating series test, yields
\begin{equation*}
\forall \ \ell\ge1, \qquad !\ell \ge \ell !\times\Big(e^{-1}-\frac{1}{(\ell+1)!}\Big).
\end{equation*}
It follows that
\begin{align*}
\# E_{id} &\ge n!\times\Big(e^{-1}-\frac{1}{(n-m+1)!}\Big)\times\Big( 1 + \frac{1}{1!} + \ldots + \frac{1}{m!} \Big) \\
&\ge n!\times\Big(e^{-1}-\frac{1}{(n-m+1)!}\Big)\times\Big(e-\frac{e}{(m+1)!}\Big)\\
&\ge n!\times\Big(1-\frac{e}{(n-m+1)!} - \frac{1}{(m+1)!} \Big).
\end{align*}
Therefore,
\begin{equation*}
\# E_{id}^\complement \le n!\times\Big( \frac{e}{(n-m+1)!} + \frac{1}{(m+1)!} \Big) \le \frac{4n!}{m!}.
\end{equation*}
\vspace{-20pt}

~
\end{proofof}

\section*{Acknowledgments}
This work was partially supported by the grants Investissements d'Avenir (ANR-11-IDEX-0003/Labex Ecodec/ANR-11-LABX-0047) and
CALLISTO. The authors thank the Reviewers for many valuable suggestions. Special thanks to an anonymous Referee for pointing a 
mistake in the proof of Theorem~\ref{lowerbound2}.

\bibliography{biblio_permut_est}

\begin{thebibliography}{20}
\providecommand{\natexlab}[1]{#1}
\providecommand{\url}[1]{\texttt{#1}}
\expandafter\ifx\csname urlstyle\endcsname\relax
  \providecommand{\doi}[1]{doi: #1}\else
  \providecommand{\doi}{doi: \begingroup \urlstyle{rm}\Url}\fi

\bibitem[Bahadur(1960)]{Bahadur}
R.~R. Bahadur.
\newblock On the asymptotic efficiency of tests and estimates.
\newblock \emph{Sankhy\=a}, 22:\penalty0 229--252, 1960.

\bibitem[Bay et~al.(2008)Bay, Ess, Tuytelaars, and Van~Gool]{surf}
Herbert Bay, Andreas Ess, Tinne Tuytelaars, and Luc Van~Gool.
\newblock Speeded-up robust features (surf).
\newblock \emph{Comput. Vis. Image Underst.}, 110\penalty0 (3):\penalty0
  346--359, jun 2008.

\bibitem[Budish et~al.(2013)Budish, Che, Kojima, and
  Milgrom]{BudishCheKojimaMilgrom2009}
Eric Budish, Yeon-Koo Che, Fuhito Kojima, and Paul Milgrom.
\newblock Designing random allocation mechanisms: Theory and applications.
\newblock \emph{American Economic Review}, 103\penalty0 (2):\penalty0 585--623,
  2013.

\bibitem[Collier(2012)]{Collier2012}
Olivier Collier.
\newblock Minimax hypothesis testing for curve registration.
\newblock \emph{Electron. J. Stat.}, 6:\penalty0 1129--1154, 2012.

\bibitem[Collier and Dalalyan(2013)]{jmlr_CD13}
Olivier Collier and Arnak~S. Dalalyan.
\newblock Permutation estimation and minimax rates of identifiability.
\newblock \emph{Journal of Machine Learning Research}, W {{\&}} CP 31 (AI-STATS
  2013):\penalty0 10--19, 2013.

\bibitem[Dalalyan and Collier(2012)]{CollierDalalyan2011}
Arnak~S. Dalalyan and Olivier Collier.
\newblock Wilks' phenomenon and penalized likelihood-ratio test for
  nonparametric curve registration.
\newblock \emph{Journal of Machine Learning Research}, W {{\&}} CP 22 (AI-STATS
  2012):\penalty0 264--272, 2012.

\bibitem[Hartley and Zisserman(2003)]{Hartley_Zisserman}
Richard Hartley and Andrew Zisserman.
\newblock \emph{Multiple view geometry in computer vision}.
\newblock Cambridge University Press, Cambridge, second edition, 2003.

\bibitem[Ingster and Suslina(2003)]{Ingster}
Yu.~I. Ingster and I.~A. Suslina.
\newblock \emph{Nonparametric goodness-of-fit testing under {G}aussian models},
  volume 169 of \emph{Lecture Notes in Statistics}.
\newblock Springer-Verlag, New York, 2003.

\bibitem[Jebara(2003)]{Jebara2003}
Tony Jebara.
\newblock Images as bags of pixels.
\newblock In \emph{ICCV}, pages 265--272. IEEE Computer Society, 2003.

\bibitem[Korostelev and Spokoiny(1996)]{KorostelevSpok}
A.~P. Korostelev and V.~G. Spokoiny.
\newblock Exact asymptotics of minimax {B}ahadur risk in {L}ipschitz
  regression.
\newblock \emph{Statistics}, 28\penalty0 (1):\penalty0 13--24, 1996.

\bibitem[Korostelev(1996)]{Korostelev}
Alexander Korostelev.
\newblock A minimaxity criterion in nonparametric regression based on
  large-deviations probabilities.
\newblock \emph{Ann. Statist.}, 24\penalty0 (3):\penalty0 1075--1083, 1996.

\bibitem[Kuhn(1955)]{Kuhn1955}
H.~W. Kuhn.
\newblock The {H}ungarian method for the assignment problem.
\newblock \emph{Naval Res. Logist. Quart.}, 2:\penalty0 83--97, 1955.

\bibitem[Laurent and Massart(2000)]{LaurentMassart2000}
B.~Laurent and P.~Massart.
\newblock Adaptive estimation of a quadratic functional by model selection.
\newblock \emph{Ann. Statist.}, 28\penalty0 (5):\penalty0 1302--1338, 2000.

\bibitem[Lowe(2004)]{Lowe2004}
David~G. Lowe.
\newblock Distinctive image features from scale-invariant keypoints.
\newblock \emph{International Journal of Computer Vision}, 60\penalty0
  (2):\penalty0 91--110, 2004.

\bibitem[Pentico(2007)]{Pentico}
David~W. Pentico.
\newblock Assignment problems: A golden anniversary survey.
\newblock \emph{European Journal of Operational Research}, 176\penalty0
  (2):\penalty0 774--793, 2007.

\bibitem[Romeijn and Morales(2000)]{Romeijn}
H.~Edwin Romeijn and Dolores~Romero Morales.
\newblock A class of greedy algorithms for the generalized assignment problem.
\newblock \emph{Discrete Applied Mathematics}, 103\penalty0 (1-3):\penalty0
  209--235, 2000.

\bibitem[Spokoiny(1996)]{Spokoiny96}
V.G. Spokoiny.
\newblock Adaptive hypothesis testing using wavelets.
\newblock \emph{The Annals of Statistics}, 24\penalty0 (6):\penalty0
  2477--2498, 1996.

\bibitem[Sturm(1999)]{SeDuMi}
Jos~F. Sturm.
\newblock Using {S}e{D}u{M}i 1.02, a {MATLAB} toolbox for optimization over
  symmetric cones.
\newblock \emph{Optim. Methods Softw.}, 11/12\penalty0 (1-4):\penalty0
  625--653, 1999.

\bibitem[Szeliski(2010)]{Szeliski}
Richard Szeliski.
\newblock \emph{Computer Vision: Algorithms and Applications}.
\newblock Springer-Verlag New York, Inc., New York, NY, USA, 1st edition, 2010.

\bibitem[Tsybakov(2009)]{Tsybakov2009}
Alexandre~B. Tsybakov.
\newblock \emph{Introduction to nonparametric estimation}.
\newblock Springer Series in Statistics. Springer, New York, 2009.

\end{thebibliography}

\end{document}